\newtheorem{theorem}{Theorem}[section]
\newtheorem{lemma}[theorem]{Lemma}
\newtheorem{corollary}[theorem]{Corollary}
\newtheorem{proposition}[theorem]{Proposition}
\numberwithin{equation}{section}
\theoremstyle{definition}
\newtheorem{remark}[theorem]{Remark}
\def\leq{\leqslant }
\def\geq{\geqslant}
\begin{document}

\title[$HD(M\setminus L) > 0.353$]{$HD(M\setminus L) > 0.353$}

\author[C. Matheus and C. G. Moreira]{Carlos Matheus and Carlos Gustavo Moreira}

\address{Carlos Matheus:
Universit\'e Paris 13, Sorbonne Paris Cit\'e, CNRS (UMR 7539),
F-93430, Villetaneuse, France.
}

\email{matheus.cmss@gmail.com}

\address{Carlos Gustavo Moreira:
IMPA, Estrada Dona Castorina 110, 22460-320, Rio de Janeiro, Brazil
}

\email{gugu@impa.br}

\date{\today}

\begin{abstract}
The complement $M\setminus L$ of the Lagrange spectrum $L$ in the Markov spectrum $M$ was studied by many authors (including Freiman, Berstein, Cusick and Flahive). After their works, we disposed of a countable collection of points in $M\setminus L$.

In this article, we describe the structure of $M\setminus L$ near a non-isolated point $\alpha_{\infty}$ found by Freiman in 1973, and we use this description to exhibit a concrete Cantor set $X$ whose Hausdorff dimension coincides with the Hausdorff dimension of $M\setminus L$ near $\alpha_{\infty}$. 

A consequence of our results is the lower bound $HD(M\setminus L)>0.353$ on the Hausdorff dimension $HD(M\setminus L)$ of $M\setminus L$. Another by-product of our analysis is the explicit construction of new elements of $M\setminus L$, including its largest known member $c\in M\setminus L$ (surpassing the former largest known number $\alpha_4\in M\setminus L$ obtained by Cusick and Flahive in 1989).
\end{abstract}
\maketitle


\section{Introduction}

\subsection{Statement of the main results} The Lagrange and Markov spectra are subsets of the real line related to classical Diophantine approximation problems. More precisely, the \emph{Lagrange spectrum} is
$$L:=\left\{\limsup\limits_{\substack{p, q\to\infty \\ p, q\in\mathbb{Z}}} \frac{1}{|q(q\alpha-p)|}<\infty:\alpha\in\mathbb{R}-\mathbb{Q}\right\}$$
and the \emph{Markov spectrum} is
$$M:=\left\{\frac{1}{\inf\limits_{\substack{(x,y)\in\mathbb{Z}^2\\(x,y)\neq(0,0)}} |q(x,y)|}<\infty: q(x,y)=ax^2+bxy+cy^2 \textrm{ real indefinite, }  b^2-4ac=1 \right\}.$$

Markov proved in 1879 that
$$L\cap (-\infty, 3) = M\cap (-\infty, 3) = \left\{\sqrt{5}<\sqrt{8}<\frac{\sqrt{221}}{5}<\dots\right\}$$ consists of an \emph{explicit} increasing sequence of quadratic surds accumulating only at $3$.

Hall proved in 1947 that $L\cap [c,\infty) = [c,\infty)$ for some constant $c>3$. For this reason, a half-line $[c,\infty)$ contained in the Lagrange spectrum is called a \emph{Hall ray}.

Freiman determined in 1975 the biggest half-line $[c_F,\infty)$ contained in the Lagrange spectrum, namely,
$$c_F:=\frac{2221564096+283748\sqrt{462}}{491993569} \simeq 4.5278\dots.$$
The constant $c_F$ is called \emph{Freiman's constant}.

In general, it is known that $L\subset M$ are closed subsets of $\mathbb{R}$. The results of Markov, Hall and Freiman mentioned above imply that the Lagrange and Markov spectra coincide below $3$ and above $c_F$. Nevertheless, Freiman showed in 1968 that $M\setminus L\neq\emptyset$ by exhibiting a number $\sigma\simeq 3.1181\dots\in M\setminus L$. On the other hand, some authors believe that the Lagrange and Markov spectra coincide\footnote{\emph{Added in proof}: This was conjectured by Cusick in 1975: see \cite[p. 516]{C}. As it turns out, after this article was completed, we discovered that Cusick's conjecture is false, namely $M\setminus L$ contains some numbers near $3.7096\dots$: see \cite{MM} for more details.} above $\sqrt{12}\simeq 3.4641\dots$.

The reader is invited to consult the excellent book \cite{CF} of Cusick-Flahive for a review of the literature on the Lagrange and Markov spectrum until the mid-eighties.

The main theorem of this paper concerns the Hausdorff dimension of $M\setminus L$:

\begin{theorem}\label{t.A} The Hausdorff dimension $HD(M\setminus L)$ of $M\setminus L$ satisfies:
$$0.353 < HD(M\setminus L)$$
\end{theorem}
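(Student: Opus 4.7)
The plan is to work in the standard symbolic model of the Markov and Lagrange spectra: to each bi-infinite sequence $\underline{a}=(a_n)_{n\in\mathbb{Z}}$ of positive integers one associates the continued-fraction values $\lambda_n(\underline{a})=[a_n;a_{n+1},a_{n+2},\dots]+[0;a_{n-1},a_{n-2},\dots]$, and the Markov and Lagrange values are respectively $m(\underline{a})=\sup_n\lambda_n(\underline{a})$ and $\ell(\underline{a})=\limsup_{n\to\infty}\lambda_n(\underline{a})$. A point lies in $M\setminus L$ exactly when some sequence realizes $m(\underline{a})>\ell(\underline{a})$, i.e.\ the maximal ``peak'' of $\lambda_n$ is attained but never shadowed by later peaks. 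The strategy is to build a large family of such sequences clustering near Freiman's accumulation point $\alpha_\infty$ and to bound the Hausdorff dimension of the resulting set from below.

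First I would recall Freiman's 1973 sequence $\underline{a}^*$ realizing $\alpha_\infty$ and isolate the central window where $\lambda_n(\underline{a}^*)$ attains its maximum. The goal of the local analysis is to exhibit a \emph{replacement alphabet}: a finite list of finite words that may be substituted for certain blocks of $\underline{a}^*$ away from the central window, such that every resulting sequence $\underline{a}$ still satisfies $m(\underline{a})=\lambda_0(\underline{a})$ (the original peak survives) and $\ell(\underline{a})<m(\underline{a})$ (no subsequent $\lambda_n$ ever rises to the peak). Verifying both conditions requires careful continued-fraction estimates comparing the substituted blocks with the original, and this is where most of the combinatorial labor lives.

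Second, the admissible substitutions organize into a subshift of finite type $\Sigma$ over a finite alphabet, and the value map $\underline{a}\mapsto m(\underline{a})$ sends $\Sigma$ injectively into $M\setminus L$ near $\alpha_\infty$. Because $\lambda_0$ is a sum of two continued-fraction functionals with uniformly bounded distortion on cylinders of bounded type, the image is bi-Lipschitz equivalent to a Gauss--Cantor set $K\subset[0,1]$ of continued-fraction expansions whose partial quotients follow the combinatorics of $\Sigma$. Thus $HD(M\setminus L)\geq HD(X)=HD(K)$, and the problem reduces to bounding the dimension of a concrete finite-type Gauss--Cantor set.

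Finally, to pass the threshold $0.353$ I would estimate $HD(K)$ via Bowen's formula applied to the Gauss transfer operator restricted to $\Sigma$-admissible cylinders, using rigorous interval arithmetic on sufficiently deep iterates together with the bounded-distortion property to convert cylinder-length sums into a two-sided dimension bracket. The main obstacle is, I expect, twofold: on the combinatorial side, producing an alphabet $\Sigma$ rich enough that $HD(K)>0.353$ while still being able to certify that the Lagrange value genuinely falls strictly below $m(\underline{a})$ for \emph{every} admissible substitution; on the numerical side, controlling the transfer-operator estimate with enough precision that the lower bound $0.353$ is achieved with mathematical rigor rather than heuristic confidence.
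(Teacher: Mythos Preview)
Your proposal is correct and follows essentially the same route as the paper: one fixes the right tail of Freiman's sequence as the period $\overline{1_2,2_3,1,2}$, allows the left tail to range over a Gauss--Cantor set of $\{1,2\}$-words avoiding a finite list of forbidden blocks (so that $\lambda_0$ remains the global supremum while all other $\lambda_n$ stay uniformly below it), and then bounds the Hausdorff dimension of the resulting image from below. The paper's concrete choice for the ``replacement alphabet'' is simply $\{1,\,22\}$, yielding the Gauss--Cantor set $K(\{1,2_2\})$, and the dimension bound $HD(K(\{1,2_2\}))>0.353$ is obtained exactly by the cylinder-derivative bracket you describe (the Palis--Takens $\alpha_n\leq HD\leq\beta_n$ scheme at depth $n=12$), rather than a full transfer-operator computation.
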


The proof of Theorem \ref{t.A} is based on a refinement of the analysis in Chapter 3 of Cusick-Flahive book \cite{CF} of a sequence $\alpha_n\in M\setminus L$, $n\geq 4$, converging to a number $\alpha_{\infty}\simeq 3.293\dots\in M\setminus L$ in order to exhibit a Cantor set $X$ such that
$$0.353<HD(X)=HD((M\setminus L)\cap (b_{\infty}, B_{\infty})),$$
where $(b_{\infty}, B_{\infty})$ is the largest interval disjoint from $L$ containing $\alpha_{\infty}$.

\begin{remark}\label{r.Bumby} The Cantor set $X$ is described in \eqref{e.Cantor-X} below: it is a Cantor set defined in terms of \emph{explicit} restrictions on continued fraction expansions. In particular, one can use the ``thermodynamical arguments'' of Bumby \cite{B}, Hensley \cite{H}, Jenkinson-Pollicott \cite{JP16} and Falk-Nussbaum \cite{FN} to compute $HD(X)$.

In this direction, we implemented the algorithm of Jenkinson-Pollicott and we obtained the \emph{heuristic} approximation $HD(X)=0.4816\cdots$.

In principle, this heuristic approximation can be made rigorous, but we have not pursued this direction. Instead, we exhibit a Cantor set $K(\{1,2_2\})\subset X$ whose Hausdorff dimension can be easily (and rigorously) estimated as $0.353<HD(K(\{1,2_2\}))<0.35792$ via some classical arguments explained in Palis-Takens book \cite{PT}: see Section \ref{a.PT} below.
\end{remark}

As it turns out, the first term $\alpha_4 = 3.29304427\dots$ of the sequence $(\alpha_n)_{n\geq 4}$ mentioned above was the largest \emph{known} element of $M\setminus L$ since 1989 (see page 35 of \cite{CF}). By exploiting the arguments establishing Theorem \ref{t.A}, we are able to exhibit new numbers in $M\setminus L$, including a constant $c\in M\setminus L$ with $c>\alpha_4$:

\begin{proposition}\label{p.new-numbers} The largest element of $(M\setminus L)\cap (b_{\infty}, B_{\infty})$ is
$$c = \frac{77+\sqrt{18229}}{82}+\frac{17633692-\sqrt{151905}}{24923467}=3.29304447990138\dots.$$
In particular, $c$ is the largest known element of $M\setminus L$.
\end{proposition}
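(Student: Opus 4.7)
The plan is to use the symbolic description of $(M\setminus L)\cap(b_{\infty}, B_{\infty})$ produced in the proof of Theorem \ref{t.A}. Every $\alpha$ in this set arises as the Markov value $m(\underline{a}) = \sup_{n\in\Zset} \lambda_n(\underline{a})$ of some bi-infinite sequence $\underline{a} = (\ldots, a_{-1}, a_0, a_1, \ldots) \in (\Nset^*)^{\Zset}$ of positive integers whose continued-fraction digits satisfy the explicit restrictions defining the Cantor set $X$ in \eqref{e.Cantor-X}, where
$$\lambda_n(\underline{a}) = [a_n; a_{n+1}, a_{n+2}, \ldots] + [0; a_{n-1}, a_{n-2}, \ldots],$$
and the supremum is attained at a single position (normalized to $n=0$) but not asymptotically, i.e.\ $\ell(\underline{a}) := \limsup_{|n|\to\infty} \lambda_n(\underline{a}) < m(\underline{a})$.

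First, I would extract from the combinatorial structure of $X$ the finite list of admissible central windows $(a_{-N}, \ldots, a_{-1}\mid a_0, a_1, \ldots, a_N)$ around the maximizing position: since $X$ confines the digits to $\{1,2\}$ with explicit block constraints, only a handful of windows are compatible with $\lambda_0(\underline{a})$ lying strictly above the Lagrange values of the admissible periodic tails. Next, I would invoke the monotonicity of continued fractions (replacing a digit at an even position by a larger one increases $[a_0; a_1, \ldots]$, and oppositely for odd positions, and analogously for $[0; a_{-1}, a_{-2}, \ldots]$) to reduce the maximization of $\lambda_0(\underline{a})$ to a finite problem: given an admissible central window, the optimal right and left tails are obtained by appending the largest admissible periodic continuation on each side. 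This produces finitely many explicit candidates, each a sum of two quadratic surds determined by the period of the chosen continuation.

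Comparing the candidates numerically and selecting the largest one identifies a specific extremal sequence $\underline{a}^{*}$; the two quadratic surds $\frac{77+\sqrt{18229}}{82}$ and $\frac{17633692-\sqrt{151905}}{24923467}$ will emerge as the values of the two (pre)periodic continued fractions determined by $\underline{a}^{*}$ on either side of position $0$, the discriminants $18229$ and $151905$ together with the denominators $82$ and $24923467$ being dictated by the periods and preperiods of $\underline{a}^{*}$. Once $\underline{a}^{*}$ is fixed, the closed form of $c$ reduces to a routine quadratic surd computation of the type carried out for $\alpha_n$ in \cite[Ch.~3]{CF}. Membership $c \in M\setminus L$ then amounts to two verifications: that $m(\underline{a}^{*}) = \lambda_0(\underline{a}^{*}) = c$, which is immediate from the monotonicity step, and that $c$ is not realized as a Lagrange value, which follows from the strict inequality $\ell(\underline{a}) < m(\underline{a}^{*})$ valid for every $\underline{a}$ associated to a point of $(M\setminus L)\cap(b_{\infty}, B_{\infty})$.

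The main obstacle is the finiteness and exhaustiveness of the case analysis at the central window: because the maximizer must lie in $M\setminus L$ rather than merely in $M$, monotonicity alone does not suffice and must be combined with the admissibility constraints from \eqref{e.Cantor-X} in order to rule out near-extremal candidates whose tails could otherwise push $\ell$ up to $c$. This is precisely where the detailed structural analysis underlying Theorem \ref{t.A} is indispensable: it simultaneously restricts the candidate list to a manageable size and certifies that the chosen $\underline{a}^{*}$ really satisfies $\ell(\underline{a}^{*}) < m(\underline{a}^{*})$, so that the extremal value $c$ indeed belongs to $M\setminus L$.
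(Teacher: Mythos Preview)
Your high-level strategy is broadly right, but two of your claimed shortcuts are genuine gaps. For the upper bound $m\leq c$, the paper does \emph{not} reduce to a finite list of central windows plus ``largest admissible periodic continuation.'' Proposition~\ref{p.Cantors-covering-M-L-piece} already forces the right half of any $B$ with $m(B)\in(b_\infty,B_\infty)$ to be $\overline{1_2,2_3,1,2}$; for the left half one must run an \emph{induction}, alternating between monotonicity (Lemma~\ref{l.0}) and the constraints $\lambda_j(B)\leq m(B)<B_\infty<\alpha_\infty+10^{-6}$ at the shifted positions $j=-7,-16,-22,-16-12k,-22-12k,\dots$, each time invoking a specific item of Lemma~\ref{l.1} or Lemma~\ref{l.Binfty} to exclude the continuation that monotonicity alone would select. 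The ``admissibility'' that matters here is not the block constraint from $X$ in \eqref{e.Cantor-X} but the global condition $\lambda_j(B)<B_\infty$ at infinitely many positions; only after the induction does the forced left tail $\overline{1,2_3,1,2,1_2,2,1_2,2}$ (and with it the surds $\sqrt{18229}$, $\sqrt{151905}$) emerge. Your sketch identifies neither the induction nor the extremal sequence $G$.

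Second, your claim that $m(\underline a^{*})=\lambda_0(\underline a^{*})$ ``is immediate from the monotonicity step'' is false: monotonicity maximizes $\lambda_0$ but says nothing about $\lambda_j$ for $j\neq 0$. The paper devotes a separate verification (Lemma~\ref{l.c-in-M}) using items (a)--(e) of Lemma~\ref{l.2} to check $\lambda_j(G)<\lambda_0(G)$ at every relevant position of the periodic left tail. Finally, your argument for $c\notin L$ is circular as written: showing $\ell(\underline a)<c$ only for sequences already known to give points of $M\setminus L$ cannot exclude an arbitrary sequence $\underline b$ with $\ell(\underline b)=c$. The correct route is that once $c=m(G)\in M$ and $b_\infty<c<B_\infty$ are established, Proposition~\ref{p.Cusick-Flahive-thm4} (i.e., $L\cap(b_\infty,B_\infty)=\emptyset$) gives $c\notin L$ directly.
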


\begin{remark} One has $\frac{c-\alpha_{\infty}}{\alpha_4-\alpha_{\infty}} = 32.58\dots$. In other words, if the coordinates are centered at $\alpha_{\infty}$, then $c$ is more than $32$ times larger than $\alpha_4$.
\end{remark}

\subsection{Organization of the article} In Section \ref{s.preliminaries}, we recall some classical facts about continued fractions and Perron's characterization of $L$ and $M$. In Section \ref{s.HD(M-L)>0}, we show that $HD((M\setminus L)\cap (b_{\infty}, B_{\infty})) = HD(X)$, where $X$ is a Cantor set of real numbers in $[0,1]$ whose continued fraction expansions correspond to the elements of $\{1,2\}^{\mathbb{N}}$ not containing nine explicit finite words. In particular, this reduces the proof of Theorem \ref{t.A} to the computation of lower bounds on $HD(X)$. In Section \ref{a.PT}, we complete the proof of Theorem \ref{t.A} by showing that $HD(K(\{1,2_2\}))>0.353$, where $K(\{1,2_2\})\subset X$ is the Cantor set of real numbers in $[0,1]$ whose continued fraction expansions associated to elements of $\{1,2\}^{\mathbb{N}}$ given by concatenations of the finite words $1$ and $2,2$. In Section \ref{a.new-numbers}, we pursue the arguments in Section \ref{s.HD(M-L)>0} in order to establish Proposition \ref{p.new-numbers}. In Appendix \ref{a.Berstein}, we show that $(b_{\infty}, B_{\infty})$ is the largest interval disjoint from $L$ containing $\alpha_{\infty}$: in particular, we correct some claims made by Berstein in Theorem 1 at page 47 of \cite{Be73} concerning $(b_{\infty}, B_{\infty})$. Finally, in Appendix \ref{a.CF}, we show that the largest element $\alpha_2$ of the sequence $(\alpha_n)_{n\in\mathbb{N}}$ constructed by Cusick-Flahive in Chapter 3 of \cite{CF} belongs to the Lagrange spectrum. 

\subsection*{Acknowledgements} We are thankful to Thomas Cusick, Dmitry Gayfulin and Nikolay Moshchevitin for their immense help in giving us access to the references \cite{Be73} and \cite{Fr73}. 

\section{Some preliminaries}\label{s.preliminaries}

\subsection{Continued fractions} Given an irrational number $\alpha$, we denote by
$$\alpha=[a_0; a_1, a_2,\dots] = a_0+\frac{1}{a_1+\frac{1}{a_2+\frac{1}{\ddots}}}$$
its continued fraction expansion,
and we let
$$[a_0; a_1,\dots, a_n] := a_0+\frac{1}{a_1+\frac{1}{\ddots+\frac{1}{a_n}}} := [a_0; a_1,\dots, a_n,\infty,\dots]$$  be its $n$th convergent.

A standard comparison tool for continued fractions is the following lemma\footnote{Compare with Lemmas 1 and 2 in Chapter 1 of Cusick-Flahive book \cite{CF}.}:

\begin{lemma}\label{l.0} Let $\alpha=[a_0; a_1,\dots, a_n, a_{n+1},\dots]$ and $\beta=[a_0; a_1,\dots, a_n, b_{n+1},\dots]$ with $a_{n+1}\neq b_{n+1}$. Then:
\begin{itemize}
\item $\alpha>\beta$ if and only if $(-1)^{n+1}(a_{n+1}-b_{n+1})>0$;
\item $|\alpha-\beta|<1/2^{n-1}$.
\end{itemize}
\end{lemma}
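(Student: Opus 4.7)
The plan is to reduce both assertions to the standard Möbius-transformation representation of truncated continued fractions. Writing $p_k/q_k$ for the convergents of $\alpha$ (with $p_{-1}=1$, $q_{-1}=0$, $p_0=a_0$, $q_0=1$), I would first recall the identity
$$[a_0;a_1,\ldots,a_n,t] \;=\; \frac{p_n t + p_{n-1}}{q_n t + q_{n-1}}$$
valid for any real $t>0$, which is a routine induction on $n$ using the recursions $p_k=a_k p_{k-1}+p_{k-2}$ and $q_k=a_k q_{k-1}+q_{k-2}$. Setting $t:=\alpha_{n+1}=[a_{n+1};a_{n+2},\ldots]$ and $s:=\beta_{n+1}=[b_{n+1};b_{n+2},\ldots]$ and subtracting, the Wronskian identity $p_n q_{n-1}-p_{n-1} q_n = (-1)^{n-1}$ produces the master formula
$$\alpha-\beta \;=\; \frac{(-1)^{n-1}(\alpha_{n+1}-\beta_{n+1})}{(q_n\alpha_{n+1}+q_{n-1})(q_n\beta_{n+1}+q_{n-1})}.$$

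The first bullet then follows immediately. The denominator is strictly positive, and $(-1)^{n-1}=(-1)^{n+1}$. Since $a_{n+1}\leq\alpha_{n+1}<a_{n+1}+1$ and $b_{n+1}\leq\beta_{n+1}<b_{n+1}+1$, while $a_{n+1}\neq b_{n+1}$ are positive integers, the real number $\alpha_{n+1}-\beta_{n+1}$ has the same sign as the integer $a_{n+1}-b_{n+1}$. Combining these two observations gives the equivalence $\alpha>\beta\iff(-1)^{n+1}(a_{n+1}-b_{n+1})>0$.

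For the second bullet, I would observe that both $\alpha$ and $\beta$ lie in the ``cylinder'' determined by $(a_0,\ldots,a_n)$, whose length $1/(q_n(q_n+q_{n-1}))$ is read off the same master formula by letting the tails range over $[1,\infty)$. Since $a_i\geq 1$ for $i\geq 1$, the denominators $q_k$ dominate the Fibonacci numbers, and a one-line induction yields $q_n(q_n+q_{n-1})\geq 2^{n-1}$ (using the fact that $\phi^2>2$, where $\phi$ is the golden ratio, so the Fibonacci-type growth of $q_k$ comfortably beats $\sqrt{2}^{\,k}$). This gives the desired bound $|\alpha-\beta|<1/2^{n-1}$.

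There is no genuine obstacle: the whole statement is part of the standard toolbox for comparing continued fractions sharing a common initial segment, and the only point that deserves attention is selecting the Fibonacci estimate tightly enough so that the stated constant $2^{n-1}$ emerges rather than a weaker power of two. The method itself is algebraic and does not interact with any of the deeper spectral considerations of the paper.
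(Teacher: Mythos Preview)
The paper does not prove this lemma; it simply records it with a footnote pointing to Lemmas~1 and~2 in Chapter~1 of Cusick--Flahive. Your argument is correct and is precisely the standard one implicit in that reference, so there is nothing further to compare.
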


\begin{remark}\label{r.0} For later use, note that Lemma \ref{l.0} implies that if $a_0\in\mathbb{Z}$ and $a_i\in\mathbb{N}^*$ for all $i\geq 1$, then $[a_0; a_1,\dots, a_n,\dots]<[a_0; a_1,\dots, a_n]$ when $n\geq 1$ is odd, and $[a_0; a_1,\dots, a_n,\dots]>[a_0; a_1,\dots, a_n]$ when $n\geq 0$ is even.
\end{remark}

\subsection{Perron's description of the Lagrange and Markov spectra} Given a bi-infinite sequence $A=(a_n)_{n\in\mathbb{Z}}\in(\mathbb{N}^*)^{\mathbb{Z}}$ and $i\in\mathbb{Z}$, let
$$\lambda_i(A) := [a_i; a_{i+1}, a_{i+2}, \dots] + [0; a_{i-1}, a_{i-2}, \dots].$$

Define the quantities
$$\ell(A)=\limsup\limits_{i\to\infty}\lambda_i(A) \quad \textrm{and} \quad m(A) = \sup\limits_{i\in\mathbb{Z}} \lambda_i(A).$$

In 1921, Perron showed that
$$L=\{\ell(A)<\infty: A\in(\mathbb{N}^*)^{\mathbb{Z}}\} \quad \textrm{and} \quad M=\{m(A)<\infty: A\in(\mathbb{N}^*)^{\mathbb{Z}} \}.$$

In the sequel, we will work exclusively with these characterizations of $L$ and $M$.

\subsection{Gauss-Cantor sets} Given a finite alphabet $B=\{\beta_1,\dots,\beta_m\}$, $m\geq 2$, consisting of finite words $\beta_j\in(\mathbb{N}^*)^{r_j}$, $1\leq j\leq m$, such that $\beta_i$ does not begin by $\beta_j$ for all $i\neq j$, we denote by
$$K(B):=\{[0;\gamma_1, \gamma_2,\dots]: \gamma_i\in B \,\,\,\, \forall \, i\geq 1\}\subset [0,1]$$
the \emph{Gauss-Cantor set} associated to $B$.

\subsection{Some notations} Given a finite word $\beta=(b_1,\dots, b_r)\in(\mathbb{N}^*)^r$, we denote by $\beta^T:=(b_r,\dots,b_1)$ the \emph{transpose} of $\beta$.

Also, we abreviate periodic continued fractions and bi-infinite sequences which are periodic in one or both sides by putting a bar over the period: for instance, $[\overline{2,1,1}] = [2;1,1,2,1,1,2,1,1,\dots]$ and $\overline{1}, 2, 1, 2, \overline{1, 2} = \dots, 1, 1, 1, 2, 1, 2, 1, 2, 1, 2, 1, 2, \dots$.

Moreover, we shall use subscripts to indicate the multiplicity of a digit in a sequence: for example, $[2; 1_2, 2_3, 1, 2, \dots] = [2; 1, 1, 2, 2, 2, 1, 2, \dots]$.

\section{$HD(M\setminus L)>0$}\label{s.HD(M-L)>0}

In 1973, Freiman \cite{Fr73} showed that
$$\alpha_{\infty}:=\lambda_0(A_{\infty}):=[2; \overline{1_2, 2_3, 1, 2}] + [0; 1, 2_3, 1_2, 2, 1, \overline{2}]\in M \setminus L.$$
In a similar vein, Theorem 4 in Chapter 3 of Cusick-Flahive book \cite{CF} asserts that
$$\alpha_n:=\lambda_0(A_n):= [2; \overline{1_2, 2_3, 1, 2}] + [0; 1, 2_3, 1_2, 2, 1, 2_n, \overline{1, 2, 1_2, 2_3}]\in M\setminus L$$
for all $n\geq 4$. In particular, $\alpha_{\infty}$ is not isolated in $M\setminus L$.

In what follows, we shall revisit Freiman's arguments as described in Chapter 3 of Cusick-Flahive book \cite{CF} in order to prove the following result. Let $X$ be the Cantor set
\begin{equation}\label{e.Cantor-X}
X:=\{[0;\gamma]:\gamma\in\{1,2\}^{\mathbb{N}} \textrm{ not containing the subwords in } P\}
\end{equation}
where
$$P:=\{21212, 2121_3, 1_3212, 12121_2, 1_22121, 2_3121_22_21, 12_21_2212_3, 12_3121_22_2,
2_21_2212_31\}$$
Also, let
$$b_{\infty} := [2;\overline{1_2,2_3,1,2}]+[0;\overline{1,2_3,1_2,2}] = 3.2930442439\dots$$
and
\begin{eqnarray*}
B_{\infty} &:=& [2;1,\overline{1,2_3,1,2,1_2,2,1_2,2}]+[0;1,2_3,1_2,2,1,2_3,1_2,2,1,2_2,\overline{1,2_3,1,2,1_2,2,1_2,2}] \\
&=& 3.2930444814\dots.
\end{eqnarray*}

The remainder of this section is devoted to the proof of the following result:

\begin{theorem}\label{t.M-L-piece-HD} $HD((M\setminus L)\cap (b_{\infty}, B_{\infty})) = HD(X)$ (where $X$ is the Cantor set in \eqref{e.Cantor-X}).
\end{theorem}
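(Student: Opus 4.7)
\medskip

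\noindent\textbf{Proof plan.}
The plan is to construct an explicit (essentially bijective and bi-Lipschitz) correspondence between $X$ and $(M\setminus L)\cap(b_{\infty},B_{\infty})$ using Perron's description, and to deduce the dimension equality from it. By Perron, each $\alpha\in M\cap(b_{\infty},B_{\infty})$ equals $m(A)$ for some bi-infinite $A=(a_n)_{n\in\mathbb{Z}}\in(\mathbb{N}^*)^{\mathbb{Z}}$, and we may shift $A$ so that the supremum $m(A)=\sup_i\lambda_i(A)$ is (essentially) realized at $i=0$. The values $b_{\infty}$ and $B_{\infty}$ have explicit continued fraction expansions built from the periodic structure of Freiman's sequence $A_{\infty}$, so applying Lemma \ref{l.0} to $\lambda_0(A)=[a_0;a_1,a_2,\dots]+[0;a_{-1},a_{-2},\dots]$ forces $A$ to agree with $A_{\infty}$ on a long, explicit central block around $0$, with freedom only in the two one-sided tails extending to $\pm\infty$.

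Next, I would analyze the admissible tails. The constraint $m(A)\le B_{\infty}$ at every index forces the tails to live in $\{1,2\}$ (any digit $\ge 3$ would push some $\lambda_i(A)$ above $B_{\infty}$) and forbids certain short words from ever appearing: for each such word $w$, Lemma \ref{l.0} shows that placing $w$ anywhere in a tail would create a position $j$ with $\lambda_j(A)>B_{\infty}$. Complementarily, the condition $m(A)\notin L$, i.e.\ $\ell(A)<m(A)$, translates via Lemma \ref{l.0} into the eventual avoidance of further short patterns — those which, repeated arbitrarily far from $0$, would let $\lambda_j(A)$ approach $m(A)$. Assembling both types of restrictions yields exactly the list $P$ of nine forbidden subwords; merging the two tails into a single one-sided $\{1,2\}$-sequence avoiding $P$ realizes the encoding map $(M\setminus L)\cap(b_{\infty},B_{\infty})\hookrightarrow X$.

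Conversely, given $\gamma\in X$, I would build the sequence $A_\gamma$ with the prescribed central block and tails decoded from $\gamma$, and verify directly (again by Lemma \ref{l.0}) that $m(A_\gamma)\in(b_{\infty},B_{\infty})$ and that $\ell(A_\gamma)<m(A_\gamma)$, so $m(A_\gamma)\in(M\setminus L)\cap(b_{\infty},B_{\infty})$. The continued fraction map $\gamma\mapsto[0;\gamma]$ is bi-Lipschitz onto its image on $\{1,2\}^{\mathbb{N}}$ (bounded-digit regime), and the association $A_\gamma\mapsto m(A_\gamma)$ amounts, after fixing the central block, to an affine-like combination of two such continued fraction values; hence the whole correspondence $X\to(M\setminus L)\cap(b_{\infty},B_{\infty})$ is bi-Lipschitz (up to a countable exceptional set coming from ambiguous choices of the maximizing index), and Hausdorff dimension is preserved.

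The main obstacle is the combinatorial case analysis producing the precise list $P$. One has to enumerate all short perturbations of $A_{\infty}$ that can be inserted in the tails, and for each one compute, via careful applications of Lemma \ref{l.0}, whether it violates $m(A)\le B_{\infty}$ or creates $\ell(A)=m(A)$. The visible asymmetry of $P$ (some words appear alongside their reverses, others do not) reflects the non-palindromic nature of the central block and the distinct roles of the two tails, and this is what makes the enumeration delicate; the bijection and the bi-Lipschitz dimension transfer are comparatively routine once $P$ is in hand.
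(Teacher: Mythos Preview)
Your plan has the right flavor --- encode points of $M\cap(b_\infty,B_\infty)$ by $\{1,2\}$-tails avoiding $P$ and transfer dimension through the bounded-digit continued fraction map --- but three structural points are off, and following them would not produce a proof.

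First, there is no bijection, even up to countable exceptions. The paper's argument is asymmetric: for the lower bound it embeds a \emph{single} diffeomorphic copy of $X$ into $(M\setminus L)\cap(b_\infty,B_\infty)$ (Proposition~\ref{p.M-L-piece-HD1}); for the upper bound it shows (Proposition~\ref{p.M-L-piece-HD2}) that the set is covered by a countable set $\mathcal C$ together with a \emph{countable family} $\{\mathcal D(\delta)\}$ of diffeomorphic copies of $X$, one for each finite word $\delta\in\{1,2\}^*$, and then invokes countable stability of Hausdorff dimension. The word $\delta$ records an uncontrolled transition zone between the forced central block and the $P$-avoiding tail $\gamma$; there is no canonical way to absorb it into a single parametrization by $X$.

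Second, only \emph{one} tail carries any freedom. Applying the constraint $\lambda_n(B)\le m(B)<B_\infty$ recursively to the right of the maximizing position (Lemmas~\ref{l.third-restriction} and~\ref{l.Binfty}) forces the right-hand side of $B$ to be exactly the periodic word $\overline{1_2,2_3,1,2}$, so that $m(B)=[2;\overline{1_2,2_3,1,2}]+[0;1,2_3,1_2,2,1,2_2,\dots]$ with all freedom in the left tail (Proposition~\ref{p.Cantors-covering-M-L-piece}). There is no ``merging of two tails,'' and the asymmetry in $P$ you noticed comes not from two tails playing different roles but from whether a given pattern forces the single tail to be eventually periodic.

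Third, the list $P$ does not arise from the condition $\ell(A)<m(A)$. The paper first proves outright that $L\cap(b_\infty,B_\infty)=\emptyset$ (Proposition~\ref{p.Cusick-Flahive-thm4}), so $(M\setminus L)\cap(b_\infty,B_\infty)=M\cap(b_\infty,B_\infty)$ and one never needs to impose $\ell<m$ by hand. Every word in $P$ is then forbidden purely by the Markov constraint $\lambda_n(B)\le m(B)<B_\infty<\alpha_\infty+10^{-6}$ at all positions $n$ (Lemma~\ref{l.first-restriction} and its sequels). Your proposed mechanism --- deriving part of $P$ from avoiding $\ell(A)=m(A)$ --- does not match how the list is actually obtained and would not recover it.
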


\subsection{Description of $M\setminus L$ near $\alpha_{\infty}$}\label{a.M-L-near-alphainfty}

Our description of $(M\setminus L)\cap (b_{\infty}, B_{\infty})$
needs the following improvements of Lemma 1 in \cite[Chapter 3]{CF}:

\begin{lemma}\label{l.1} If $B\in\{1,2\}^{\mathbb{Z}}$ contains any of the subsequences:
\begin{itemize}
\item[(i)] $212^*12$
\item[(ii)] $212^*1_3$
\item[(iii)] $1212^*1_2$
\item[(iv)] $2_3 1 2^* 1_2 2_2 1$
\item[(v)] $2 1 2_3 1 2^* 1_2 2_3$
\item[(vi)] $1_2 2_3 1 2^* 1_2 2_4$
\item[(vii)] $1_2 2_3 1 2^* 1_2 2_3 1_2$
\item[(viii)] $1_3 2_3 1 2^* 1_2 2_3 1 2$
\item[(ix)] $2 1_2 2_3 1 2^* 1_2 2_3 1 2_2$
\item[(x)] $2_2 1_2 2_3 1 2^* 1_2 2_3 1 2 1$
\item[(xi)] $1_2 2 1_2 2_3 1 2^* 1_2 2_3 1 2 1_2 2$
\end{itemize}
then $\lambda_j(B)>\alpha_{\infty}+10^{-6}$ where $j$ indicates the position in asterisk.
\end{lemma}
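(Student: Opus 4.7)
The plan is to handle all eleven cases uniformly via Perron's formula and Lemma \ref{l.0}. In every pattern the asterisked digit is $2$, so Perron's characterisation gives
$$\lambda_j(B)\;=\;[2;a_{j+1},a_{j+2},\dots]+[0;a_{j-1},a_{j-2},\dots].$$
Writing $\alpha_{\infty}=\alpha^{+}+\alpha^{-}$ with $\alpha^{+}=[2;\overline{1_2,2_3,1,2}]$ and $\alpha^{-}=[0;1,2_3,1_2,2,1,\overline{2}]$, the task reduces to bounding each one-sided continued fraction from below and checking that the sum exceeds $\alpha_{\infty}+10^{-6}$.

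For each pattern a finite block of digits is forced on both sides of $j$, while the remaining digits are known only to lie in $\{1,2\}$. Since $\alpha\mapsto[a_0;a_1,\dots,a_n,\alpha]$ is monotonic with sign $(-1)^{n+1}$, the minimum of each one-sided continued fraction over admissible tails is attained at the unique alternating extension in $\{1,2\}^{\mathbb{N}}$ whose orientation is dictated by the length parity of the forced block. These extremal tails are eventually periodic, so each lower bound is a concrete quadratic surd computable from the convergents of the forced block.

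The routine step is to verify, case by case, that the sum of the two minimal one-sided continued fractions exceeds $\alpha_{\infty}+10^{-6}$. In the short patterns (i)--(iii) the first disagreement with the expansion of $\alpha_{\infty}$ sits within three digits of $j$ and the gap is of order $10^{-1}$. In the longer patterns (iv)--(xi) the forced block approximates the expansion of $\alpha_{\infty}$ on both sides of $j$ for more and more digits, and the gap shrinks accordingly; but the subwords in the hypothesis are chosen precisely so that, on the side where they force a disagreement with $\alpha^{\pm}$, the resulting positive contribution dominates any possible negative contribution from the other side. The tightest case is (xi), where the forced left block agrees with $\alpha^{-}$ through position $8$ and diverges at position $9$; even there the gain turns out to be a few times $10^{-6}$, comfortably above the advertised buffer.

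The main obstacle is bookkeeping rather than conceptual depth: one must track the position of the asterisk inside each pattern, identify the parity of the forced block on each side (hence the correct worst-case tail), and numerically evaluate eleven quadratic-surd comparisons. I would organise the arithmetic by exploiting the fact that the refined patterns (vi)--(xi) strictly extend shorter patterns in the list, so that the extremal surds attached to the longer cases can be obtained by extending the already evaluated shorter ones rather than starting the comparison from scratch. A final independent decimal sanity check against $\alpha_{\infty}\simeq 3.293\dots$ guards against sign slips in the eleven individual comparisons.
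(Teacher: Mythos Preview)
Your plan is essentially the paper's own approach: in each case one reads off the forced block on each side of $j$, minimises the two one-sided continued fractions over all continuations in $\{1,2\}$ using the monotonicity in Lemma~\ref{l.0}, and checks the resulting lower bound against $\alpha_{\infty}+10^{-6}$. The only cosmetic difference is that the paper, instead of evaluating the periodic alternating tail as a quadratic surd, truncates one step further at an even position and invokes Remark~\ref{r.0} to land on an explicit rational lower bound (e.g.\ $[2;1_2,2_3,1,2,1_2,2]+[0;1,2_3,1_2,2,1_2,1]=\tfrac{446537}{135600}$ in case~(xi)); this makes the eleven inequalities immediately verifiable by hand, whereas your version would require comparing quadratic surds. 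Note that what you describe as ``bookkeeping'' is the entire content of the paper's proof: the eleven explicit numerical inequalities are written out in full, and your proposal would not be complete without them.
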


\begin{proof} If (i) occurs, then Remark \ref{r.0} implies that
$$\lambda_j(B) = [2; 1, 2,\dots]+[0; 1, 2,\dots] > [2; 1, 2]+[0; 1, 2] = \frac{10}{3} > \alpha_{\infty} + 10^{-2}.$$

If (ii) occurs, then Remark \ref{r.0} says that
$$\lambda_j(B) = [2; 1_3,\dots] + [0; 1, 2,\dots] > [2; 1_4] + [0; 1, 2_2, 1] = \frac{33}{10} > \alpha_{\infty} + 10^{-3}.$$

If (iii) occurs, then Remark \ref{r.0} implies that
\begin{eqnarray*}
\lambda_j(B) &=& [2; 1_2,\dots] + [0; 1,2,1,\dots] \\ 
&>& [2; 1_2,2,1,2,1] + [0; 1, 2, 1_2,2,1] = \frac{2143}{650} > \alpha_{\infty}+10^{-3}.
\end{eqnarray*}

If (iv) occurs, then Remark \ref{r.0} says that
\begin{eqnarray*}
\lambda_j(B) &=& [2; 1_2, 2_2, 1, \dots] + [0; 1, 2_3,\dots]  \\
&>& [2; 1_2, 2_2, 1_2, 2, 1] + [0; 1, 2_4, 1] = \frac{9933}{3016} > \alpha_{\infty}+10^{-4}.
\end{eqnarray*}

If (v) occurs, then Remark \ref{r.0} implies that
\begin{eqnarray*}
\lambda_j(B) &=& [2; 1_2, 2_3,\dots] + [0; 1, 2_3, 1, 2,\dots]  \\
&>& [2; 1_2, 2_3, 1] + [0; 1, 2_3, 1, 2] = \frac{8776}{2665} > \alpha_{\infty}+10^{-5}.
\end{eqnarray*}

If (vi) occurs, then Remark \ref{r.0} says that
\begin{eqnarray*}
\lambda_j(B) &=& [2; 1_2, 2_4, \dots] + [0; 1, 2_3, 1_2, \dots] \\
&>& [2; 1_2, 2_5, 1] + [0; 1, 2_3, 1_2, 2, 1] = \frac{115702}{35133} > \alpha_{\infty}+10^{-4}.
\end{eqnarray*}

If (vii) occurs, then Remark \ref{r.0} says that
\begin{eqnarray*}
\lambda_j(B) &=& [2; 1_2, 2_3, 1_2, \dots] + [0; 1, 2_3, 1_2, \dots] \\
&>& [2; 1_2, 2_3, 1_3, 2, 1] + [0; 1, 2_3, 1_2, 2, 1] = \frac{195086}{59241} > \alpha_{\infty}+10^{-5}.
\end{eqnarray*}

If (viii) occurs, then Remark \ref{r.0} implies that
\begin{eqnarray*}
\lambda_j(B) &=& [2; 1_2, 2_3, 1, 2,\dots] + [0; 1, 2_3, 1_3, \dots] \\
&>& [2; 1_2, 2_3, 1, 2, 1] + [0; 1, 2_3, 1_4] = \frac{26529}{8056} > \alpha_{\infty}+10^{-5}.
\end{eqnarray*}

If (ix) occurs, then Remark \ref{r.0} says that
\begin{eqnarray*}
\lambda_j(B) &=& [2; 1_2, 2_3, 1, 2_2, \dots] + [0; 1, 2_3, 1_2, 2, \dots] \\
&>& [2; 1_2, 2_3, 1, 2_3, 1] + [0; 1, 2_3, 1_2, 2, 1, 2, 1] = \frac{1621169}{492300} > \alpha_{\infty}+10^{-6}.
\end{eqnarray*}

If (x) occurs, then Remark \ref{r.0} implies that
\begin{eqnarray*}
\lambda_j(B) &=& [2; 1_2, 2_3, 1, 2, 1, \dots] + [0; 1, 2_3, 1_2, 2_2, \dots] \\
&>& [2; 1_2, 2_3, 1, 2, 1, 2, 1] + [0; 1, 2_3, 1_2, 2_3, 1] = \frac{1615094}{490455} > \alpha_{\infty} + 10^{-6}.
\end{eqnarray*}

If (xi) occurs, then Remark \ref{r.0} says that
\begin{eqnarray*}
\lambda_j(B) &=& [2; 1_2, 2_3, 1, 2, 1_2, 2,\dots] + [0; 1, 2_3, 1_2, 2, 1_2, \dots] \\
&>& [2; 1_2, 2_3, 1, 2, 1_2, 2] + [0; 1, 2_3, 1_2, 2, 1_2, 1] = \frac{446537}{135600} > \alpha_{\infty} + 10^{-6}.
\end{eqnarray*}
%
\end{proof}

\begin{lemma}\label{l.xii} Let $B\in\{1,2\}^{\mathbb{Z}}$.
\begin{itemize}
\item[(xii')] If $B$ contains $1 2 1 2_2 1 2 1_2 2_3 1 2^* 1_2 2_3 1 2 1_2 2_3 1 2 1_2 2$, then $\lambda_j(B) > B_{\infty} + 6\times 10^{-9}$ where $j$ indicates the position in asterisk.
\item[(xii'')] If $B$ contains $2_2 1 2_2 1 2 1_2 2_3 1 2^* 1_2 2_3 1 2 1_2 2_3 1 2 1_2 2$, then $\lambda_j(B) < B_{\infty} - 10^{-9}$ where $j$ indicates the position in asterisk.
\end{itemize}
\end{lemma}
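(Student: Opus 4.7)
Our plan is to mimic the proof of Lemma \ref{l.1}: from each subword, we read off the explicit initial digits of the two continued fractions in
$$\lambda_j(B)=[2;a_{j+1},a_{j+2},\ldots]+[0;a_{j-1},a_{j-2},\ldots],$$
and compare them with the corresponding summands of $B_\infty=[2;1,\overline{P}]+[0;Q,\overline{P}]$, where $P=1,2_3,1,2,1_2,2,1_2,2$ and $Q=1,2_3,1_2,2,1,2_3,1_2,2,1,2_2$. The common right‐hand portion $2^*\,1_2\,2_3\,1\,2\,1_2\,2_3\,1\,2\,1_2\,2$ of both subwords fixes $a_j,\ldots,a_{j+17}$, so the first summand of $\lambda_j(B)$ coincides in (xii') and (xii''). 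The two left‐hand words differ only in their leftmost character, so $a_{j-1},\ldots,a_{j-12}$ are common while $a_{j-13}=1$ in (xii') and $a_{j-13}=2$ in (xii'').

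We first fix the signs of all the relevant differences via Remark \ref{r.0}. The right summand of $\lambda_j(B)$ agrees with $[2;1,\overline{P}]$ through $a_{10}$ and then has $a_{11}=2$ against $B_\infty$'s $1$; with $n=10$ even, this summand is strictly smaller than its $B_\infty$ counterpart. In both (xii') and (xii''), the left summand agrees with $[0;Q,\overline{P}]$ through $a_{10}$ and has $a_{11}=1$ against $B_\infty$'s $2$, so with $n=10$ even it strictly exceeds $[0;Q,\overline{P}]$. Finally, the two left summands themselves agree through $a_{12}$ and disagree at $a_{13}$: with $n=12$ even, the (xii'') left summand is strictly smaller than the (xii') one. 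Hence in each case $\lambda_j(B)-B_\infty$ is a positive left‐side excess minus a positive right‐side deficit, and the claim reduces to quantitative control.

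The quantitative bounds will come from the linear‐fractional representation
$$[a_0;a_1,\ldots,a_n,T]=\frac{p_nT+p_{n-1}}{q_nT+q_{n-1}},$$
with $p_n/q_n$ produced by the continuant recursion $p_{n+1}=a_{n+1}p_n+p_{n-1}$ and the tail $T=[a_{n+1};a_{n+2},\ldots]$ confined to the interval $[(1+\sqrt{3})/2,\,1+\sqrt{3}]$ (whose endpoints are the periodic continued fractions $[1;\overline{2,1}]$ and $[2;\overline{1,2}]$) because the alphabet is $\{1,2\}$. The sign $(-1)^{n+1}$ of $p_nq_{n-1}-p_{n-1}q_n$ selects which endpoint of this interval must be plugged in to produce the desired upper or lower bound on each summand. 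Since $B_\infty$ can be evaluated exactly from its periodic expansions as a sum of two quadratic surds, inserting the extremal tails at the appropriate depths yields the sought inequalities $\lambda_j(B)>B_\infty+6\times 10^{-9}$ for (xii') and $\lambda_j(B)<B_\infty-10^{-9}$ for (xii'').

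The chief obstacle is numerical rather than conceptual: the margins $6\times 10^{-9}$ and $10^{-9}$ force us to carry the continuants out to depth roughly $17$ on the right and $13$ on the left, and to keep around ten decimal digits of the quadratic surds $[2;1,\overline{P}]$ and $[0;Q,\overline{P}]$. The manipulations are otherwise entirely analogous to those in Lemma \ref{l.1}, only longer and with tighter tolerance.
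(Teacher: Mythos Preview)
Your approach is correct and is essentially the same as the paper's: bound each summand of $\lambda_j(B)$ by plugging in the extremal periodic tails $\overline{1,2}$ (for the lower bound in (xii')) or $\overline{2,1}$ (for the upper bound in (xii'')) after the last prescribed digit, then compare the resulting explicit quadratic surds to $B_\infty$. The paper's proof is terser---it simply writes down the two extremal-tail expressions and asserts the numerical inequalities---whereas you additionally spell out the sign analysis at the first disagreement with $B_\infty$; this extra paragraph is correct but not needed for the argument, since the quantitative bound comes entirely from the tail substitution at depths $17$ and $13$.
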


\begin{proof} If (xii') occurs, then Lemma \ref{l.0} says that
\begin{eqnarray*}
\lambda_j(B) &=& [2; 1_2, 2_3, 1, 2, 1_2, 2_3, 1, 2, 1_2, 2,\dots] + [0; 1, 2_3, 1_2, 2, 1, 2_2, 1, 2, 1, \dots] \\
&\geq& [2; 1_2, 2_3, 1, 2, 1_2, 2_3, 1, 2, 1_2, 2,\overline{1,2}] + [0; 1, 2_3, 1_2, 2, 1, 2_2, 1, 2, 1, \overline{1,2}] \\
&>& B_{\infty} + 6\times 10^{-9}.
\end{eqnarray*}

If (xii'') occurs, then Lemma \ref{l.0} says that
\begin{eqnarray*}
\lambda_j(B) &=& [2; 1_2, 2_3, 1, 2, 1_2, 2_3, 1, 2, 1_2, 2,\dots] + [0; 1, 2_3, 1_2, 2, 1, 2_2, 1, 2_2 \dots] \\
&\leq& [2; 1_2, 2_3, 1, 2, 1_2, 2_3, 1, 2, 1_2, 2,\overline{2,1}] + [0; 1, 2_3, 1_2, 2, 1, 2_2, 1, 2_2, \overline{2,1}] \\
&<& B_{\infty} - 10^{-9}.
\end{eqnarray*}
\end{proof}

We will also need the following result (extracted from Lemma 2 in Chapter 3 of \cite{CF}):

\begin{lemma}\label{l.2} If $B\in\{1,2\}^{\mathbb{Z}}$ contains any of the subsequences
\begin{itemize}
\item[(a)] $1^*$
\item[(b)] $22^*$
\item[(c)] $1_2 2^* 1_2$
\item[(d)] $2_2 1 2^* 1_2 2 1$
\item[(e)] $1 2_2 1 2^* 1_2 2$
\item[(f)] $2_4 1 2^* 1_2 2_3$
\end{itemize}
then $\lambda_j(B)<\alpha_{\infty} - 10^{-5}$ where $j$ indicates the position in asterisk.
\end{lemma}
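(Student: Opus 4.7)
The plan is to run the same machinery as in the proof of Lemma~\ref{l.1}, but to obtain \emph{upper} bounds on $\lambda_j(B)$ rather than lower bounds. Writing
\[
\lambda_j(B) = [a_j;a_{j+1},a_{j+2},\dots] + [0;a_{j-1},a_{j-2},\dots],
\]
each hypothesis in (a)--(f) fixes the finitely many digits of $B$ adjacent to position $j$; the remaining digits are free elements of $\{1,2\}$, and Remark~\ref{r.0} determines the extremal continuation. Concretely, after a prescribed prefix ending at position $n$, the next free digit maximizes the continued fraction by taking value $1$ when $n$ is even and value $2$ when $n$ is odd. Iterating this rule, the supremum over all $\{1,2\}$-continuations of a given prefix is realized by an alternating periodic completion with period $\overline{1,2}$ or $\overline{2,1}$ (chosen to match the parity), and is therefore a concrete quadratic surd.

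The body of the proof is then six short computations, one per case: read off the prescribed prefixes on both sides, determine the maximizing periodic tails via the parity rule, evaluate the two resulting quadratic surds, and add. In (a) both sides are free and the bound is the trivial $[1;\overline{1,2}]+[0;\overline{1,2}]=2\sqrt{3}-1\approx 2.464$, well below $\alpha_\infty-10^{-5}\approx 3.29303$. In (b) one gets $[2;\overline{1,2}]+[0;2,\overline{2,1}]=2+\tfrac{2\sqrt{3}}{3}\approx 3.155$; in (c), by symmetry of the pattern, $[2;1,1,\overline{1,2}]+[0;1,1,\overline{1,2}]=5-\sqrt{3}\approx 3.268$. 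In (d)--(f) the prescribed prefixes are longer, so the quadratic surds in the bound carry larger integer coefficients, but the evaluations remain mechanical applications of the recursion $[a;a_1,a_2,\dots]=a+1/[a_1;a_2,\dots]$.

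The main obstacle is case (f): the pattern $2_4\,1\,2^*\,1_2\,2_3$ closely mimics a neighborhood of the very expansion defining $\alpha_\infty$, and the upper bound produced by the argument comes out numerically within $\approx 10^{-4}$ of $\alpha_\infty$. To handle this, I would perform case~(f) with exact arithmetic in $\mathbb{Q}[\sqrt{3}]$, recording each partial convergent as an exact fraction of the form $(p+q\sqrt{3})/r$, and then compare the final sum with a sufficiently precise numerical estimate of $\alpha_\infty$ to certify the required $10^{-5}$ gap. The shape of the hypothesis is in fact dictated by this tightness: weakening $2_4$ to $2_3$ on the left, or $2_3$ to $2_2$ on the right, would allow the maximizing continuation to come within $10^{-5}$ of $\alpha_\infty$ and the bound would fail.
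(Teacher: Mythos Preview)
Your approach is correct and essentially the same as the paper's: both use Remark~\ref{r.0}/Lemma~\ref{l.0} to identify the extremal $\{1,2\}$-continuation on each side of position $j$ and then bound $\lambda_j(B)$ from above. The only difference is a bookkeeping choice. You push the maximizing continuation out to the periodic tail $\overline{1,2}$ (or $\overline{2,1}$) and obtain the exact supremum as a quadratic surd in $\mathbb{Q}[\sqrt{3}]$; the paper instead truncates the maximizing continuation at an odd depth, which by Remark~\ref{r.0} still gives a strict upper bound but now a \emph{rational} one (e.g.\ $[2;1_2,2_4,1]+[0;1,2_5,1]=\tfrac{45641}{13860}$ in case~(f)). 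The paper's bounds are therefore slightly weaker than yours termwise, but this costs nothing and has the practical advantage that every comparison reduces to integers---so the exact-arithmetic concern you raise for case~(f) simply never arises. Incidentally, the gap in case~(f) is about $4\times 10^{-5}$, comfortably above $10^{-5}$, so even your surd bound would not need delicate handling; and for case~(a) the paper uses the cruder observation $\lambda_j(B)<1+1+1=3$ rather than computing any continued fraction at all.
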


\begin{proof} If (a) occurs, then $\lambda_j(B) = 1+[0;\dots]+[0;\dots]<3<\alpha_{\infty}-10^{-1}$.

If (b) occurs, then Remark \ref{r.0} implies that
$$\lambda_j(B) = [2;2,\dots]+[0;\dots]<[2; 1, 2, 1] + [0; 2, 2, 1] = \frac{89}{28} < \alpha_{\infty}-10^{-1}.$$

If (c) occurs, then Remark \ref{r.0} says that
$$\lambda_j(B) = [2; 1, 1, \dots]+[0; 1, 1, \dots] < [2; 1_3, 2, 1] + [0; 1_3, 2, 1] = \frac{36}{11} < \alpha_{\infty}-10^{-2}.$$

If (d) occurs, then Remark \ref{r.0} implies that
\begin{eqnarray*}
\lambda_j(B) &=& [2; 1_2, 2, 1, \dots]+[0; 1, 2_2, \dots] \\
&<& [2; 1_2, 2, 1_2, 2, 1] + [0; 1, 2_3, 1] = \frac{3395}{1032} < \alpha_{\infty}-10^{-3}.
\end{eqnarray*}

If (e) occurs, then Remark \ref{r.0} says that
\begin{eqnarray*}
\lambda_j(B) &=& [2; 1_2, 2, \dots]+[0; 1, 2_2, 1, \dots] \\
&<& [2; 1_2, 2_2, 1, 2, 1] + [0; 1, 2_2, 1_2, 2, 1, 2, 1] = \frac{47081}{14301} < \alpha_{\infty} - 10^{-4}.
\end{eqnarray*}

If (f) occurs, then Remark \ref{r.0} implies that
\begin{eqnarray*}
\lambda_j(B) &=& [2; 1_2, 2_3, \dots]+[0; 1, 2_4, \dots] \\
&<& [2; 1_2, 2_4, 1] + [0; 1, 2_5, 1] = \frac{45641}{13860} < \alpha_{\infty} - 10^{-5}.
\end{eqnarray*}
\end{proof}

By putting together Lemma \ref{l.1} and Lemma \ref{l.2}, we obtain:

\begin{lemma}\label{l.first-restriction} Let $B=(B_m)_{m\in\mathbb{Z}}\in\{1,2\}^{\mathbb{Z}}$ be a bi-infinite sequence. Suppose that $\lambda_n(B)\leq\alpha_{\infty}+10^{-6}$ at a certain position $n\in\mathbb{Z}$. Then, the sole possible situations are:
\begin{itemize}
\item $B_n=1$ and $\lambda_n(B)<\alpha_{\infty}-10^{-5}$;
\item $B_{n-1}B_n=22$ and $\lambda_n(B)<\alpha_{\infty}-10^{-5}$;
\item $B_nB_{n+1}=22$ and $\lambda_n(B)<\alpha_{\infty}-10^{-5}$;
\item $B_{n-2}B_{n-1}B_nB_{n+1}B_{n+2} = 11211$ and $\lambda_n(B)<\alpha_{\infty}-10^{-5}$;
\item $B_{n-3}\dots B_{n+4}\in \{21121221, 22121121\}$ and $\lambda_n(B)<\alpha_{\infty}-10^{-5}$;
\item $B_{n-4}\dots B_{n+3}\in \{12112122, 12212112\}$ and $\lambda_n(B)<\alpha_{\infty}-10^{-5}$;
\item $B_{n-5}\dots B_{n+5}\in\{22211212222, 22221211222\}$ and $\lambda_n(B)<\alpha_{\infty}-10^{-5}$;
\item $B_{n-5}\dots B_{n+4}=1222121122$;
\item $B_{n-4}\dots B_{n+5}= 2211212221$.
\end{itemize}
In particular, the subwords $212^*12$, $212^*1_3$, $1_32^*12$, $1212^*1_2$, $1_22^*121$,  $2_312^*1_22_21$ and $12_21_22^*12_3$ are forbidden (where the asterisk indicates the $n$th position).
\end{lemma}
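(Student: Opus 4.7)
The proof is a lengthy but mechanical case analysis on the local pattern of $B$ around position $n$, driven by the forbidden patterns of Lemma \ref{l.1} and the bounding patterns of Lemma \ref{l.2}. The key enabling observation is that
$$\lambda_n(B) = B_n + [0;B_{n+1},B_{n+2},\ldots] + [0;B_{n-1},B_{n-2},\ldots]$$
is invariant under the involution $\sigma$ that swaps the two tails at $n$, i.e.\ $\sigma(B)_{n+k}=B_{n-k}$. Hence every subword pattern $P$ centered at $n$ that forces $\lambda_n$ above $\alpha_{\infty}+10^{-6}$ (resp.\ below $\alpha_{\infty}-10^{-5}$) yields a twin forbidden (resp.\ bounding) pattern obtained by reading $P$ backwards. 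Applied to Lemmas \ref{l.1}(i)--(xi) and \ref{l.2}(a)--(f), this doubles my supply of usable subwords; in particular, the seven subwords listed in the ``In particular'' clause are precisely items (i)--(iv) of Lemma \ref{l.1} together with the $\sigma$-reversals of (ii), (iii), (iv), item (i) being palindromic.

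Having fixed this dictionary, I walk outward from $B_n$ one digit at a time. If $B_n=1$ apply Lemma \ref{l.2}(a); if $B_{n-1}B_n=22$ apply Lemma \ref{l.2}(b), and if $B_nB_{n+1}=22$ apply its $\sigma$-reverse. Otherwise $B_{n-1}B_nB_{n+1}=121$. Inspecting $(B_{n-2},B_{n+2})$: the pair $(1,1)$ produces $1_2\,2^*\,1_2$ and invokes Lemma \ref{l.2}(c), yielding case 4 of the statement; the pair $(2,2)$ produces $212^*12$, forbidden by Lemma \ref{l.1}(i), a contradiction; the two mixed pairs are related by $\sigma$, so I may follow a single representative branch.

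I then push the enumeration outward one layer at a time. Each branch terminates in exactly one of three ways: (a) matching a forbidden pattern from the extended Lemma \ref{l.1} list (including $\sigma$-reversals), producing a contradiction with the hypothesis; (b) matching one of Lemma \ref{l.2}(d)--(f) or a $\sigma$-reversal, producing cases 5, 6 or 7 of the statement together with the bound $\lambda_n<\alpha_{\infty}-10^{-5}$; or (c) surviving this layer and being carried one level deeper. The only branches that never terminate in (a) or (b) turn out to be the two patterns $1\,2_3\,1\,2^*\,1_2\,2_2$ and its $\sigma$-reverse $2_2\,1_2\,2^*\,1\,2_3\,1$, which become cases 8 and 9 of the statement. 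The listed cases therefore exhaust all possibilities.

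The main obstacle is purely organizational. There is a genuine tree of branchings, and at every node one must verify whether any of the longer subwords in Lemma \ref{l.1}(iv)--(xi) or Lemma \ref{l.2}(d)--(f) (or their $\sigma$-reversals) already applies---missing one such match leaves spurious cases, while double-counting branches related by $\sigma$ inflates the analysis. No individual comparison is subtle (each reduces to evaluating a short continued-fraction approximation as in the proofs of Lemmas \ref{l.1} and \ref{l.2}), but the bookkeeping is delicate, which is presumably why the surviving configurations in the statement must be specified by subwords of length up to $11$.
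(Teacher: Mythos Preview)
Your proposal is correct and follows essentially the same route as the paper: a layer-by-layer case analysis outward from position $n$, using items (i)--(iv) of Lemma~\ref{l.1} (with their reversals) to prune branches and items (a)--(f) of Lemma~\ref{l.2} (with their reversals) to bound the surviving ones, leaving precisely the two asymmetric configurations in cases~8 and~9. The only difference is presentational: you make the reversal involution $\sigma$ explicit at the outset and use it to halve the tree, whereas the paper treats the two orientations in parallel without naming the symmetry.
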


\begin{proof} By items (a) and (b) of Lemma \ref{l.2}, if $B_n=1$, $B_{n-1}B_n=22$ or $B_n B_{n+1}=22$, then $\lambda_n(B)<\alpha_{\infty}-10^{-5}$.

By items (i), (ii) and (iii) of Lemma \ref{l.1}, our assumption $\lambda_n(B)\leq\alpha_{\infty}+10^{-6}$ implies that the subwords $212^*12$, $212^*1_3$, $1_32^*12$, $1212^*1_2$ and $1_22^*121$ are forbidden for $B_n=2^*$. So, if $B_{n-1}B_nB_{n+1}=121$, then one has just three possibilities:
\begin{itemize}
\item $B_{n-2}\dots B_{n+2}=11211$ and, by item (c) of Lemma 2, $\lambda_n(B)<\alpha_{\infty}-10^{-5}$;
\item $B_{n-3}\dots B_{n+3}\in \{2112122, 2212112\}$.
\end{itemize}

Suppose that $B_{n-3}\dots B_{n+3}\in \{2112122, 2212112\}$. By items (d) and (e) of Lemma \ref{l.2}, if $B_{n+4}=1$ or $B_{n-4}=1$, i.e., if
$$B_{n-3}\dots B_{n+4}\in \{21121221, 22121121\} \quad \textrm{or} \quad
B_{n-4}\dots B_{n+3}\in \{12112122, 12212112\},$$
then $\lambda_n(B)<\alpha_{\infty}-10^{-5}$.

Assume that $B_{n-4}\dots B_{n+4}\in\{221121222, 222121122\}$. By item (f) of Lemma \ref{l.2}, if $(B_{n-5}, B_{n+5})=(2,2)$, i.e.,
$$B_{n-5}\dots B_{n+5}\in\{22211212222, 22221211222\},$$
then $\lambda_n(B)<\alpha_{\infty}-10^{-5}$.

Consider the case $B_{n-4}\dots B_{n+4}\in\{221121222, 222121122\}$ and $(B_{n-5}, B_{n+5})\neq (2,2)$. Our assumption $\lambda_n(B)\leq\alpha_{\infty}+10^{-6}$ and the item (iv) of Lemma \ref{l.1} say that the subwords $22212^*11221$ and $122112^*1222$ are forbidden for $B_n=2^*$. Therefore, we have just two possibilities in this situation:
$$B_{n-5}\dots B_{n+4}=1222121122 \quad \textrm{or} \quad B_{n-4}\dots B_{n+5}= 2211212221.$$

This proves the desired lemma.
\end{proof}

By further exploiting Lemma \ref{l.1}, we also get the following results:

\begin{lemma}\label{l.second-restriction} Let $B=(B_m)_{m\in\mathbb{Z}}\in\{1,2\}^{\mathbb{Z}}$ be a bi-infinite sequence. Suppose that $\lambda_{n}(B) \leq \alpha_{\infty}+10^{-6}$ for some $n\in\mathbb{Z}$.
\begin{itemize}
\item If $B_{n-5}\dots B_{n+4}=1222121122$, then $B_{n-8}\dots B_{n+8}=12112221211222121$;
\item If $B_{n-4}\dots B_{n+5}= 2211212221$, then $B_{n-8}\dots B_{n+8} = 12122211212221121$.
\end{itemize}
\end{lemma}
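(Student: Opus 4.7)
The plan is to extend the 10-letter window supplied by Lemma~\ref{l.first-restriction} to the 17-letter one claimed in the conclusion, forcing the seven remaining digits one at a time. The engine is the long-pattern items (iv)--(x) of Lemma~\ref{l.1}: each of them is an explicit forbidden neighbourhood about an asterisked $2$, and the standing hypothesis $\lambda_n(B)\leq\alpha_{\infty}+10^{-6}$ rules out any such pattern at $j=n$. I would invoke these seven items in turn at $j=n$, each time choosing the item so that all but one of its entries already match digits of $B$, and reading off the value of the last free digit from the resulting contradiction.

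For the first case, starting from $B_{n-5}\ldots B_{n+4}=1222121122$, item (iv) ($2_3 1 2^{*} 1_2 2_2 1$) has its left half already matching $B_{n-4}\ldots B_{n-1}=2221$ and matches on the right precisely when $B_{n+5}=1$; hence $B_{n+5}=2$. With the right neighbourhood now reading $1_2 2_3$, item (v) matches iff $B_{n-6}=2$, forcing $B_{n-6}=1$. Continuing in the same fashion, item (vi) forces $B_{n+6}=1$, item (vii) forces $B_{n+7}=2$, item (viii) forces $B_{n-7}=2$, item (ix) forces $B_{n+8}=1$, and item (x) forces $B_{n-8}=1$. Concatenating these forced values with the initial window yields $B_{n-8}\ldots B_{n+8}=12112221211222121$, as claimed.

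The second case is the mirror image of the first. A direct computation from Perron's formula shows that if $B'_k:=B_{-k}$ then $\lambda_{-j}(B')=\lambda_j(B)$ for every $j\in\mathbb{Z}$; moreover the hypothesis and conclusion of the second case are the entry-by-entry transposes of those of the first. Consequently the second case follows by applying the first-case argument to $B'$ at position $-n$.

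The only real obstacle is bookkeeping: at each of the seven steps one must line up the chosen item of Lemma~\ref{l.1} against the digits already secured, verify that exactly one entry remains free, and confirm that the value forbidden by $\lambda_n(B)\leq\alpha_{\infty}+10^{-6}$ is the one we need to exclude. No new Diophantine estimates are required beyond those already recorded in Lemma~\ref{l.1}.
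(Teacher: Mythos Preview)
Your proposal is correct and follows essentially the same approach as the paper: the paper's proof reduces the second case to the first by the transpose symmetry and then states, in a single sentence, that successively applying items (iv)--(x) of Lemma~\ref{l.1} at position $n$ forces the extension $B_{n-8}\dots B_{n+8}=12112221211222121$. Your write-up simply unpacks that sentence, recording for each item exactly which previously determined digits it already matches and which single undetermined digit it then forces; the order (iv), (v), (vi), (vii), (viii), (ix), (x) and the resulting forced values $B_{n+5}=2$, $B_{n-6}=1$, $B_{n+6}=1$, $B_{n+7}=2$, $B_{n-7}=2$, $B_{n+8}=1$, $B_{n-8}=1$ are all correct.
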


\begin{proof} Since $2211212221=(1222121122)^T$, it suffices to show the lemma in the first case $B_{n-5}\dots B_{n+4}=1222121122$.

By succesively using items (iv), (v), (vi), (vii), (viii), (ix) and (x) of Lemma \ref{l.1} together with our assumption $\lambda_n(B)\leq\alpha_{\infty}+10^{-6}$, we see that, in our setting, the only possible way to extend $B_{n-5}\dots B_{n+4}=1222121122$ is $B_{n-8}\dots B_{n+8}=1211222121122$.
\end{proof}

\begin{lemma}\label{l.third-restriction} Let $B=(B_m)_{m\in\mathbb{Z}}\in\{1,2\}^{\mathbb{Z}}$ be a bi-infinite sequence. Suppose that $\lambda_{n-7}(B), \lambda_{n}(B), \lambda_{n+7}(B) \leq \alpha_{\infty}+10^{-6}$ for some $n\in\mathbb{Z}$.
\begin{itemize}
\item If $B_{n-5}\dots B_{n+4}=12_3121_22_2$, then:
\begin{itemize}
\item either $B_{n-10}\dots B_{n+11}=2_2121_22_3121_22_3121_221$,
\item or $B_{n-10}\dots B_{n+11}=2_2121_22_3121_22_3121_22_2$ and, in particular, the vicinity of $B_{n+7}=2$ is $B_{n+2}\dots B_{n+11} = 12_3121_22_2$.
\end{itemize}
\item If $B_{n-4}\dots B_{n+5}= 2_21_2212_31$, then:
\begin{itemize}
\item either $B_{n-11}\dots B_{n+10} = 121_2212_31_2212_31_2212_2$,
\item or $B_{n-11}\dots B_{n+10} = 2_21_2212_31_2212_31_2212_2$ and, in particular, the vicinity of $B_{n-7}=2$ is $B_{n-11}\dots B_{n-2} = 2_21_2212_31$.
\end{itemize}
\end{itemize}
\end{lemma}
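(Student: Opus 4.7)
The plan is to reduce to the first bullet via the transpose symmetry, then to bootstrap the block already determined by Lemma \ref{l.second-restriction} using the two new hypotheses $\lambda_{n\pm 7}(B)\leq\alpha_{\infty}+10^{-6}$ and Lemma \ref{l.first-restriction}, pinning down three extra symbols on each side of the known portion. Since $2_21_2212_31=(12_3121_22_2)^T$, it suffices to treat the first bullet: reversing a bi-infinite sequence about position $n$ turns a second-bullet instance into a first-bullet one while preserving the three $\lambda$-bounds.

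Assume $B_{n-5}\dots B_{n+4}=1222121122$. Lemma \ref{l.second-restriction} already supplies $B_{n-8}\dots B_{n+8}=12112221211222121$. Applying Lemma \ref{l.first-restriction} at $j=n+7$, where $B_{n+7}=2$ and the frozen portion is $B_{n+2}\dots B_{n+8}=1222121$, only two of its nine alternatives survive a direct digit-by-digit comparison with the known portion, namely $B_{n+4}\dots B_{n+11}=22121121$ (forcing $B_{n+9}B_{n+10}B_{n+11}=121$) and the structural $B_{n+2}\dots B_{n+11}=1222121122$ (forcing $B_{n+9}B_{n+10}B_{n+11}=122$). The two survivors share $B_{n+9}B_{n+10}=12$ and differ only in $B_{n+11}\in\{1,2\}$, reproducing the two options of the conclusion; in the second survivor the vicinity of $B_{n+7}$ is precisely $12_3121_22_2$.

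Applying Lemma \ref{l.first-restriction} at $j=n-7$, the frozen block $B_{n-8}\dots B_{n-2}=1211222$ leaves exactly four survivors: (A) $B_{n-9}\dots B_{n-5}=11211$, which forces $B_{n-9}=1$; (B) $B_{n-11}\dots B_{n-4}=12212112$; (C) the structural $B_{n-12}\dots B_{n-3}=1222121122$; (D) $B_{n-12}\dots B_{n-2}=22221211222$. Survivors (B), (C), and (D) all yield $B_{n-10}=B_{n-9}=2$, matching the target; eliminating (A) is the main obstacle, since it is the only survivor with $B_{n-9}\neq 2$. I rule it out by combining its $B_{n-9}=1$ with the right-side information $B_{n+9}B_{n+10}=12$ from the previous paragraph: the resulting block $B_{n-9}\dots B_{n+10}$ equals $1,1,2,1,1,2,2,2,1,2,1,1,2,2,2,1,2,1,1,2$, which is exactly the subword of item (xi) of Lemma \ref{l.1} centred at $n$. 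That item forces $\lambda_n(B)>\alpha_{\infty}+10^{-6}$, contradicting the hypothesis. Once (A) is excluded, the three remaining left-side cases together with the two right-side alternatives give exactly the two strings $B_{n-10}\dots B_{n+11}$ claimed in the first bullet; the second bullet follows from the first by the transpose symmetry.
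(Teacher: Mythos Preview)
Your proof is correct and follows essentially the same route as the paper's. Both arguments reduce to the first bullet by transposition, invoke Lemma~\ref{l.second-restriction} to fix $B_{n-8}\dots B_{n+8}$, use the constraint at $n+7$ to pin down $B_{n+9}B_{n+10}=12$, and then apply item~(xi) of Lemma~\ref{l.1} at position $n$ as the decisive step to exclude $B_{n-9}=1$; the only cosmetic difference is that you route the case analysis through the packaged Lemma~\ref{l.first-restriction} at $n\pm7$ whereas the paper cites items (i), (ii), (iii) of Lemma~\ref{l.1} directly.
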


\begin{proof} Since $2211212221=(1222121122)^T$, it suffices to show the lemma in the first case $B_{n-5}\dots B_{n+4}=1222121122$.

By Lemma \ref{l.second-restriction}, we have from our hypothesis $\lambda_n(B)\leq \alpha_{\infty}+10^{-6}$ that $B_{n-8}\dots B_{n+8}=121_22_3121_2 2_3121$.

From our assumption $\lambda_{n+7}(B)\leq \alpha_{\infty}+10^{-6}$ and the items (i), (ii) of Lemma \ref{l.1}, the only way to extend $B_{n-8}\dots B_{n+8}$ is
$$B_{n-8}\dots B_{n+10} = 121_22_3121_2 2_3121_22.$$

From our assumption $\lambda_{n}(B)\leq \alpha_{\infty}+10^{-6}$ and the item (xi) of Lemma \ref{l.1}, the only way to extend $B_{n-8}\dots B_{n+10}$ is
$$B_{n-9}\dots B_{n+10} = 2121_22_3121_2 2_3121_22.$$

From our assumption $\lambda_{n-7}(B)\leq \alpha_{\infty}+10^{-6}$ and the item (iii) of Lemma \ref{l.1}, the only way to extend $B_{n-9}\dots B_{n+10}$ is
$$B_{n-10}\dots B_{n+10} = 2_2121_22_3121_2 2_3121_22.$$

Thus, $B_{n-10}\dots B_{n+10}$ extends as
$$B_{n-10}\dots B_{n+11} = 2_2121_22_3121_22_3121_221 \quad \textrm{ or } \quad 2_2121_22_3121_2 2_3121_22_2.$$
\end{proof}

Next, we employ Lemmas \ref{l.1} and \ref{l.xii} to get the following statement:

\begin{lemma}\label{l.Binfty} Let $A\in\{1,2\}^{\mathbb{Z}}$ be a bi-infinite sequence. Suppose that, for some $n\in\mathbb{Z}$ and $a\in\mathbb{N}$, one has $\lambda_{n\pm7}(A) \leq B_{\infty}+6\times 10^{-9}$, $\lambda_{n\pm(17+6k)}(A)\leq \alpha_{\infty}+10^{-6}$ for each $k=1,\dots, 2a$, and $\lambda_{n\pm(7+6j)}(A)\leq \alpha_{\infty}+10^{-6}$ for each $j=1,\dots, 2a$.

If $A_{n-10}\dots A_{n+11}$ or $(A_{n-11}\dots A_{n+10})^T$ equals $2_2121_22_3121_22_3121_221$, then
\begin{eqnarray*}
\lambda_n(A)&\geq& [2;1,\underbrace{1,2_3,1,2,1_2,2,1_2,2,\dots, 1,2_3,1,2,1_2,2,1_2,2}_{a+1 \textrm{ times }},\dots] \\
&+& [0;1,2_3,1_2,2,1,2_3,1_2,2,1,2_2,\underbrace{1,2_3,1,2,1_2,2,1_2,2,\dots, 1,2_3,1,2,1_2,2,1_2,2}_{a \textrm{ times }},\dots].
\end{eqnarray*}

In particular, the subsequence $2_2121_22_3121_22_3121_221$ or its transpose $121_2212_31_2212_31_2212_2$ is not contained in a bi-infinite sequence $A\in\{1,2\}^{\mathbb{Z}}$ with $m(A)<B_{\infty}$.
\end{lemma}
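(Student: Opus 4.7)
I would prove this lemma by induction on $a \geq 0$, at each step extending the portion of $A$ whose digits are forced by the hypotheses by one period $P := 1,2_3,1,2,1_2,2,1_2,2$ (of length $12$) on each side of position $n$. For the base case $a = 0$, the hypothesis pattern already realizes $A_{n+2}\dots A_{n+11}$ as the first ten symbols of $P$ and, read right-to-left, $A_{n-1}\dots A_{n-10}$ as the first ten symbols of the left prefix $1,2_3,1_2,2,1,2_3,1_2,2,1,2_2$. What remains is to force $A_{n+12},A_{n+13}$ on the right and $A_{n-17},\dots,A_{n-11}$ on the left; for this I use the bound $\lambda_{n+7}(A)\leq B_\infty + 6\times 10^{-9}$ (which via Lemma \ref{l.xii} excludes the 31-character subsequence centered at $n+7$ whose initial portion coincides with what is already fixed, noting $B_\infty + 6\times 10^{-9} < \alpha_\infty + 10^{-6}$) together with the item-wise restrictions of Lemmas \ref{l.1}, \ref{l.2}, \ref{l.first-restriction}, \ref{l.second-restriction}, \ref{l.third-restriction} to eliminate all other possibilities; the symmetric bound at $n-7$ handles the left side. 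Once the prefix is fully determined, Lemma \ref{l.0} and Remark \ref{r.0} yield the inequality $\lambda_n(A) \geq [2;1,P,\dots] + [0;1,2_3,1_2,2,1,2_3,1_2,2,1,2_2,\dots]$, where the trailing ``$\dots$'' indicates that the remaining unknown digits of $A$ lie in $\{1,2\}$.

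For the inductive step $a \to a+1$: by hypothesis $A_{n-17-12a}\dots A_{n+13+12a}$ is forced to match the RHS prefix for parameter $a$, so in particular the right edge ends with a full copy of $P$ (and similarly on the left). The four new $\lambda$-bounds made available at the increment, at offsets $12a+13,\ 12a+19,\ 12a+23,\ 12a+29$ from $n$ on each side (from $j=2a+1,2a+2$ and $k=2a+1,2a+2$ in the hypothesis), are precisely calibrated to reproduce, at the new outer edge, the same configuration that Lemma \ref{l.third-restriction} handled in its original application at position $n$. Applying Lemma \ref{l.third-restriction} at the appropriately shifted center yields two candidate $22$-character extensions; the ``bad'' one (terminating the period pattern with $2_2$ rather than $2\,1$) is ruled out by invoking Lemma \ref{l.xii} and the items of Lemma \ref{l.1} with the next assumed $\lambda$-bound. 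The remaining extension appends a fresh copy of $P$ on each side, completing the inductive step.

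The main obstacle is the bookkeeping: each inductive step involves concurrent lemma applications at several distinct shifted centers, and one must verify that the precise positions of the assumed $\lambda$-bounds suffice to rule out every competing extension. Fortunately the length-$12$ periodicity of $P$ combined with the arithmetic-progression structure of the assumed bounds guarantees that the case analysis at each step is isomorphic (after translation by $12$) to the base case (and to the analysis carried out in the proofs of Lemmas \ref{l.second-restriction} and \ref{l.third-restriction}), so no essentially new ideas are needed beyond those already developed. Finally, for the ``in particular'' clause: letting $a \to \infty$ the right-hand side converges to $[2;1,\overline{P}] + [0;1,2_3,1_2,2,1,2_3,1_2,2,1,2_2,\overline{P}] = B_\infty$, which forces $\lambda_n(A) \geq B_\infty$ and hence $m(A) \geq B_\infty$; this contradicts $m(A) < B_\infty$, so the 22-character subsequence $2_2121_22_3121_22_3121_221$ (and its transpose) cannot appear in such an $A$.
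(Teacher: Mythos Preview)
Your overall plan—induction on $a$, at each step extending the controlled portion of the continued fraction by one copy of $P$ on each side—is exactly the paper's strategy. But two aspects of your execution need correction.

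First, you describe the argument as one of \emph{forcing} the digits of $A$; the paper instead computes the \emph{minimum} of $\lambda_n(A)$ among all $A$ compatible with the hypotheses (it says so explicitly: ``$\lambda_n(A)$ is minimized when \dots''). On the left side some digits genuinely are forced: items (v), (viii), (x), (xi) of Lemma~\ref{l.1} at position $n-7$ (using $\lambda_{n-7}\leq B_\infty+6\times10^{-9}<\alpha_\infty+10^{-6}$) determine $A_{n-11},\dots,A_{n-14}$, and then Lemma~\ref{l.xii} at $n-7$ pins down one further digit. But on the right side $A_{n+12},A_{n+13}$ are \emph{not} forced in the base case; the bound there comes purely from Lemma~\ref{l.0}, since the worst-case $\{1,2\}$-continuation at positions $12,13$ already agrees with the tail of $P$. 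Your appeal to Lemma~\ref{l.xii} at $n+7$ would require knowing $A$ well beyond $A_{n+11}$, so it cannot do the work you assign to it.

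Second, several of your citations cannot be invoked. Lemma~\ref{l.2} yields only \emph{upper} bounds on $\lambda_j$ and is irrelevant for a lower bound on $\lambda_n$. Lemma~\ref{l.third-restriction} requires bounds at three positions $m-7,m,m+7$; since all hypothesis offsets $\pm 7,\pm(7+6j),\pm(17+6k)$ from $n$ are odd, no three of them are spaced by $7$, so that lemma is never applicable here. The paper's inductive step is more direct: it applies item (iii) of Lemma~\ref{l.1} at the positions $n+1+12j$ and items (i), (ii), (iv), (v) at $n+7+12j$ (right side), and the analogous items at $n-11-12k$ and $n-17-12k$ (left side), each alternated with a worst-case step via Lemma~\ref{l.0}. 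Your ``in particular'' argument, letting $a\to\infty$ so the bound tends to $B_\infty$, is correct and matches the paper.
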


\begin{proof} We can assume that $A_{n-10}\dots A_{n+11}=2_2121_22_3121_22_3121_221$: indeed, the other case $(A_{n-11}\dots A_{n+10})^T = 2_2121_22_3121_22_3121_221$ is completely similar.

By Lemma \ref{l.0}, if $A_{n-10}\dots A_{n+11}=2_2121_22_3121_22_3121_221$, then
$$\lambda_n(A) \geq [2;1_2,2_3,1,2,1_2,2,1_2,2,1,2,\dots]+[0;1,2_3,1_2,2,1,2_3,1,\dots].$$

From our assumption $\lambda_{n-7}(A)\leq B_{\infty}+6\times 10^{-9}<\alpha_{\infty}+10^{-6}$, we deduce from the items (v), (viii), (x) and (xi) of Lemma \ref{l.1} that
$$\lambda_n(A) \geq [2;1_2,2_3,1,2,1_2,2,1_2,2,1,2,\dots]+[0;1,2_3,1_2,2,1,2_3,1_2,2,1,2,\dots].$$

By Lemma \ref{l.0}, one has
$$\lambda_n(A) \geq [2;1_2,2_3,1,2,1_2,2,1_2,2,1,2,\dots]+[0;1,2_3,1_2,2,1,2_3,1_2,2,1,2_2,1,2,\dots].$$

It follows from our assumption $\lambda_{n-7}(A)\leq B_{\infty}+6\times 10^{-9}$ and Lemma \ref{l.xii} that
$$\lambda_n(A) \geq [2;1_2,2_3,1,2,1_2,2,1_2,2,1,2,\dots]+[0;1,2_3,1_2,2,1,2_3,1_2,2,1,2_2,1,2_2,\dots].$$

By Lemma \ref{l.0}, we get that
$$\lambda_n(A) \geq [2;1_2,2_3,1,2,1_2,2,1_2,2,1,2,\dots]+[0;1,2_3,1_2,2,1,2_3,1_2,2,1,2_2,1,2_3,1,2,1,\dots].$$

We proceed now by induction. On one hand, by recursively using
\begin{itemize}
\item the item (iii) of Lemma \ref{l.1} and our assumption $\lambda_{n+1+12j}(A)\leq \alpha_{\infty}+10^{-6}$ for $j=1,\dots, a$,
\item Lemma \ref{l.0}, and
\item the items (i), (ii), (iv) and (v) of Lemma \ref{l.1} and our assumption $\lambda_{n+7+12j}(A)\leq \alpha_{\infty}+10^{-6}$ for $j=1,\dots, a$,
\end{itemize}
we derive that $\lambda_n(A)$ is minimized when $A_{n+4+12j}\dots A_{n+15+12j}=2_2121_221_2212$ for $j=1,\dots, a$. On the other hand, by recursively using
\begin{itemize}
\item the items (i), (ii), (iv) and (v) of Lemma \ref{l.1} our assumption $\lambda_{n-11-12k}(A)\leq \alpha_{\infty}+10^{-6}$ for $k=1,\dots, a$, and
\item Lemma \ref{l.0},
\item the item (iii) of Lemma \ref{l.1} and our assumption $\lambda_{n-17-12k}(A)\leq \alpha_{\infty}+10^{-6}$ for $k=1,\dots, a$,
\end{itemize}
we derive that $\lambda_n(A)$ is minimized when $A_{n-13-12k}\dots A_{n-24-12k}=121_2212_3121$ for $k=1,\dots, a$. Therefore,
\begin{eqnarray*}
\lambda_n(A)&\geq& [2;1,\underbrace{1,2_3,1,2,1_2,2,1_2,2,\dots, 1,2_3,1,2,1_2,2,1_2,2}_{a+1 \textrm{ times }}\dots] \\
&+& [0;1,2_3,1_2,2,1,2_3,1_2,2,1,2_2,\underbrace{1,2_3,1,2,1_2,2,1_2,2,\dots, 1,2_3,1,2,1_2,2,1_2,2}_{a \textrm{ times }},\dots].
\end{eqnarray*}

Finally, suppose that $A\in\{1,2\}^{\mathbb{Z}}$ is a bi-infinite sequence with $m(A)<B_{\infty}$ containing $2_2121_22_3121_22_3121_221$ or its transpose, say $A_{l-10}\dots A_{l+11}$ or $(A_{l-11}\dots A_{l+10})^T$ equals $2_2121_22_3121_22_3121_221$ for some $l\in\mathbb{Z}$. The previous discussion would then imply that
\begin{eqnarray*}
B_{\infty} &>& m(A) \geq \lambda_l(A) \\ &\geq& [2;1,\overline{1,2_3,1,2,1_2,2,1_2,2}]
+ [0;1,2_3,1_2,2,1,2_3,1_2,2,1,2_2,\overline{1,2_3,1,2,1_2,2,1_2,2}] \\
&:=& B_{\infty},
\end{eqnarray*}
a contradiction. This completes the proof of the lemma.
\end{proof}

At this point, we are ready to describe $M\cap (b_{\infty}, B_{\infty})$.

\begin{proposition}\label{p.Cusick-Flahive-thm4} If $\alpha\in M\cap (b_{\infty}, B_{\infty})$, then $\alpha\notin L$.
\end{proposition}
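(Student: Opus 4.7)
The plan is to argue by contradiction. Suppose $\alpha\in L\cap(b_\infty,B_\infty)$ and pick $B\in(\mathbb{N}^*)^{\mathbb{Z}}$ with $\ell(B)=\alpha$. Because $\alpha<B_\infty<\sqrt{12}$, entries of $B$ past some threshold automatically lie in $\{1,2\}$, so I treat $B\in\{1,2\}^{\mathbb{Z}}$. Fix $\epsilon>0$ with $\alpha-\epsilon>b_\infty$ and $\alpha+\epsilon<B_\infty$; because $(b_\infty,B_\infty)$ has width less than $3\cdot 10^{-7}$, the inequalities $\alpha+\epsilon<\alpha_\infty+10^{-6}$ and $\alpha+\epsilon<B_\infty+6\cdot 10^{-9}$ are automatic, so every $\lambda$-bound hypothesis appearing in Lemmas \ref{l.1}--\ref{l.Binfty} will be in force at sufficiently large positions. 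Pick $N$ with $\lambda_i(B)<\alpha+\epsilon$ for all $i>N$, and an increasing sequence of good positions $i_k\to\infty$ satisfying $\lambda_{i_k}(B)>\alpha-\epsilon$.

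Fix a single good $i_k$ lying far past $N$. Lemma \ref{l.first-restriction} together with Lemma \ref{l.second-restriction} forces the window at $i_k$ to be either type I ($B_{i_k-5}\dots B_{i_k+4}=12_3121_22_2$) or type II (its transpose); by the mirror symmetry of Lemmas \ref{l.third-restriction} and \ref{l.Binfty} I assume type I. The heart of the argument is then to iterate Lemma \ref{l.third-restriction} at the centres $n_j:=i_k+7j$, $j\geq 0$, whose hypotheses $\lambda_{n_j\pm 7}(B)\leq\alpha_\infty+10^{-6}$ are inherited from $\lambda_\cdot(B)<\alpha+\epsilon$. Each step produces one of two outcomes: outcome (A), the terminal window $B_{n_j-10}\dots B_{n_j+11}=2_2121_22_3121_22_3121_221$; or outcome (B), the recursive window $B_{n_j-10}\dots B_{n_j+11}=2_2121_22_3121_22_3121_22_2$, which re-establishes the hypothesis of Lemma \ref{l.third-restriction} at $n_{j+1}$ and allows the iteration to continue.

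If outcome (A) occurs at some step, then Lemma \ref{l.Binfty} applied at $n=n_j$ gives a lower bound on $\lambda_{n_j}(B)$ converging monotonically to $B_\infty$ as the auxiliary parameter $a$ grows. Choosing $a$ so that this bound exceeds $\alpha+\epsilon$ (possible since $\alpha+\epsilon<B_\infty$) and $i_k$ large enough that all the $O(a)$ positions $n_j\pm 7$, $n_j\pm(7+6j')$, $n_j\pm(17+6k')$ required as auxiliary hypotheses lie beyond $N$, one obtains $\lambda_{n_j}(B)>\alpha+\epsilon$, contradicting $n_j>N$. If instead outcome (B) occurs for every $j\geq 0$, then $B$ is purely periodic with period $\overline{1_2,2_3,1,2}$ on every index $\geq i_k-10$. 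A short $4$-gram check shows that $2,2,1,1$ never appears as a subword of this length-$7$ period, so no type II window fits inside the periodic region; consequently every good $i_{k'}>i_k$ must be of type I at the same residue modulo $7$ as $i_k$. For such an $i_{k'}$, $[B_{i_{k'}};B_{i_{k'}+1},\dots]=[2;\overline{1_2,2_3,1,2}]$ exactly, while the left tail matches $[0;\overline{1,2_3,1_2,2}]$ in its first $i_{k'}-i_k+10$ partial quotients; Lemma \ref{l.0} then gives
\[
\lambda_{i_{k'}}(B)<b_\infty+2^{-(i_{k'}-i_k+9)},
\]
which falls below $\alpha-\epsilon$ once $i_{k'}-i_k$ is sufficiently large, contradicting $\lambda_{i_{k'}}(B)>\alpha-\epsilon$.

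The main obstacle is the bookkeeping in outcome (A): simultaneously verifying all the $\lambda$-bound hypotheses of Lemma \ref{l.Binfty} at the many distant positions $n_j\pm(7+6j')$ and $n_j\pm(17+6k')$ for $j',k'\leq 2a$, which requires pushing $i_k$ well past $N$ to absorb the $O(a)$-sized window of auxiliary constraints. The combinatorial verification in outcome (B) that the subword $2,2,1,1$ is forbidden in the periodic word $\overline{1_2,2_3,1,2}$ is short, and once both are in place everything else reduces to standard continued-fraction comparisons via Lemma \ref{l.0}.
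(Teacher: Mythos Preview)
Your overall strategy coincides with the paper's: use Lemmas \ref{l.first-restriction}, \ref{l.third-restriction} and \ref{l.Binfty} to force either a terminal window (leading to $\lambda_n(B)\to B_\infty$) or eventual periodicity (leading to $\ell(B)=b_\infty$). The type I branch of your argument is essentially correct, modulo the harmless reordering of first fixing $a=a(\epsilon)$ and only then choosing $i_k>N+17+12a$.

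The genuine gap is the phrase ``by the mirror symmetry of Lemmas \ref{l.third-restriction} and \ref{l.Binfty} I assume type I.'' This reduction does not work. For a type II window at $i_k$, Lemma \ref{l.third-restriction} iterates \emph{backward}, to positions $n_j=i_k-7j$. Two things then fail:
\begin{itemize}
\item In outcome (A), the terminal window sits at $n_j=i_k-7j$, and the auxiliary hypotheses of Lemma \ref{l.Binfty} require $\lambda_{n_j-(17+12a)}(B)\leq\alpha_\infty+10^{-6}$. But $j$ is not bounded a priori, so $n_j-(17+12a)$ may drop below $N$, where you have no control on $\lambda_\cdot(B)$. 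Choosing $i_k$ large does not help, since $j$ can grow with $i_k$.
\item In outcome (B), the periodic region you obtain lies \emph{behind} $i_k$ (roughly on $[N',i_k+10]$), and therefore places no constraint on later good positions $i_{k'}>i_k+10$; your $4$-gram argument and the comparison with $b_\infty$ do not apply.
\end{itemize}
Reversing $B$ does not rescue the symmetry either: the good positions $i_k\to+\infty$ become $-i_k\to-\infty$ after reversal, so you lose the one-sided control $\lambda_m(B)<\alpha+\epsilon$ for $m>N$ on the side where you need it.

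The paper avoids this trap by a different initial dichotomy: either the terminal window (or its transpose) occurs in $B_nB_{n+1}\dots$ for every $n\geq N$, in which case there is a sequence $m_k\to\infty$ of such occurrences and Lemma \ref{l.Binfty} is applied at $m_k$ with $a_k\to\infty$; or else there is $R\geq N$ beyond which no terminal window occurs, so the Lemma \ref{l.third-restriction} iteration \emph{never} produces outcome (A), and one then handles type I and type II separately---for type II by iterating backward from \emph{each} $n_k$ a number of steps $d_k\asymp(n_k-R)/7$, so that the resulting periodic segments, all anchored near $R$ on the left and extending to $n_k+10\to\infty$ on the right, force $B$ to be eventually periodic. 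To repair your argument you must either adopt this two-scenario split, or in the ``all $i_k$ are type II'' case explicitly run the backward iteration from every $i_k$ and combine the results as the paper does.
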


\begin{proof} Our argument is inspired by the proof of Theorem 4 in Chapter 3 of Cusick-Flahive book \cite{CF}.

Suppose that $\alpha\in L\cap (b_{\infty}, B_{\infty})$. Let  $B\in\{1,2\}^{\mathbb{Z}}$ be a bi-infinite sequence such that $\ell(B):=\limsup\limits_{i\to\infty}\lambda_i(B) = \alpha$.

Since $\alpha_{\infty}-10^{-5}<b_{\infty}< \alpha < B_{\infty}$, we can fix $N\in\mathbb{N}$ large enough such that
$$\lambda_n(B)<B_{\infty}$$
for all $|n|\geq N$, and we can select a monotone sequence $\{n_k\}_{k\in\mathbb{N}}$ such that $|n_k|\geq N$ and $\lambda_{n_k}(B)\geq\alpha_{\infty}-10^{-5}$ for all $k\in\mathbb{Z}$. Moreover, by reversing $B$ if necessary, we can assume that $n_k\to+\infty$ as $k\to\infty$ and $\limsup\limits_{n\to+\infty}\lambda_n(B) = \alpha$.

We have two possibilities:
\begin{itemize}
\item either the sequence $B_n B_{n+1}\dots$ contains the subsequence $2_2121_22_3121_22_3121_221$ or its transpose $121_2212_31_2212_31_2212_2$ for all
$n\geq N$,
\item or there exists $R\geq N$ such that $B_R B_{R+1}\dots$ does not contain the subsequence $2_2121_22_3121_22_3121_221$ or its transpose $121_2212_31_2212_31_2212_2$.
\end{itemize}

In the first scenario, let $\{m_k\}_{k\in\mathbb{N}}$ be a monotone sequence such that $B_{m_k-10}\dots B_{m_k+11}$ or $(B_{m_k-11}\dots B_{m_k+10})^T$ equals $2_2121_22_3121_22_3121_221$ for all $k\in\mathbb{N}$ and $m_k\to+\infty$ as $k\to\infty$. By Lemma \ref{l.Binfty}, the fact that $\lambda_n(B)<B_{\infty}$ for all $n\geq N$ would imply that
\begin{eqnarray*}
\lambda_{m_k}(B)&\geq& [2;1,\underbrace{1,2_3,1,2,1_2,2,1_2,2,\dots, 1,2_3,1,2,1_2,2,1_2,2}_{a_k+1 \textrm{ times }}\dots] \\
&+& [0;1,2_3,1_2,2,1,2_3,1_2,2,1,2_2,\underbrace{1,2_3,1,2,1_2,2,1_2,2,\dots, 1,2_3,1,2,1_2,2,1_2,2}_{a_k \textrm{ times }},\dots]
\end{eqnarray*}
where $a_k=\lfloor\frac{m_k-17-N}{6}\rfloor$. Since $a_k\to\infty$ as $k\to\infty$, it would follow that
$$B_{\infty} > \alpha\geq \limsup\limits_{k\to\infty} \lambda_{m_k}(B)\geq B_{\infty},$$
a contradiction.

In the second scenario, we note that, by Lemma \ref{l.first-restriction}, for each $k\in\mathbb{N}$, we have
\begin{itemize}
\item either $B_{n_k-5}\dots B_{n_k+4}=12_3121_22_2$,
\item or $B_{n_k-4}\dots B_{n_k+5} = 2_21_2212_31$.
\end{itemize}
If the first possibility occurs for some $k_0\in\mathbb{N}$ with $n_{k_0}\geq R+10$, then the facts that $\lambda_n(B)<B_{\infty}<\alpha_{\infty}+10^{-6}$ for all $n\geq N$ and the sequence $B_RB_{R+1}\dots$ does not contain the subsequence $2_2121_22_3121_22_3121_221$ allow to repeatedly apply Lemma \ref{l.third-restriction} at the positions $n_{k_0}+7a$, $a\in\mathbb{N}$, to deduce that the sequence $B$ has the form
$$\dots B_{n_{k_0}} B_{n_{k_0}+1} B_{n_{k_0}+2}\dots = \dots 2\overline{1_22_312}.$$
If the second possibility occurs for all $n_k> R+10$, then the facts that $\lambda_n(B)<B_{\infty}<\alpha_{\infty}+10^{-6}$ for all $n\geq N$ and the sequence $B_RB_{R+1}\dots$ does not contain the subsequence $121_2212_31_2212_31_2212_2$ allow to apply $d_k:=\lfloor\frac{n_k-4-R}{7}\rfloor$ times Lemma \ref{l.third-restriction} at the positions $n_k-7(j-1)$, $j=1,\dots, d_k$, to deduce that the sequence $B$ has the form
$$\dots B_{n_k-7d_k}\dots B_{n_k}\dots B_{n_k+10}\dots = \dots \underbrace{212_31_2,\dots, 212_31_2}_{d_k \textrm{ times}}212_31_2212_2\dots.$$
Because $R-4\leq n_k-7d_k\leq R+11$ and $n_k\to+\infty$, we deduce that $B$ has the form $\dots\overline{212_31_2}$.

In any case, the second scenario would imply that
$$b_{\infty} < \alpha=\limsup\limits_{n\to+\infty}\lambda_n(B) = \ell(\overline{1_22_312})=b_{\infty},$$
a contradiction.

In summary, the existence of $\alpha\in L\cap (b_{\infty}, B_{\infty})$ leads to a contradiction in any scenario. This proves the proposition.
\end{proof}

\begin{remark}\label{r.Berstein} As it was first observed in Theorem 1, pages 47 to 49 of Berstein's article \cite{Be73}, one can \emph{improve} Proposition \ref{p.Cusick-Flahive-thm4} by showing that $(b_{\infty}, B_{\infty})$ is the \emph{largest} interval disjoint from $L$ containing $\alpha_{\infty}$.

Actually, it does not take much more work to get this improved version of Proposition \ref{p.Cusick-Flahive-thm4}: in fact, since this proposition ensures that $L\cap(b_{\infty}, B_{\infty}) = \emptyset$, and we have that $b_{\infty}=\ell(\overline{1_22_312})\in L$, it suffices to prove that $B_{\infty}\in L$. For the sake of completeness (and also to correct some mistakes in \cite{Be73}), we show that $B_{\infty}\in L$ in Appendix \ref{a.Berstein} below.
\end{remark}

\begin{proposition}\label{p.Cantors-covering-M-L-piece} Let $m\in M\cap (b_{\infty}, B_{\infty})$. Then, $m=m(B)=\lambda_0(B)$ for a sequence $B\in\{1,2\}^{\mathbb{Z}}$ with the following properties:
\begin{itemize}
\item $B_{-10}\dots B_0 B_1\dots B_7\dots = 2_2121_22_312\overline{1_22_312}$;
\item there exists $N\geq 11$ such that $\dots B_{-N-1} B_{-N}$ is a word on $1$ and $2$ satisfying:
\begin{itemize}
\item it does not contain the subwords $21212$, $2121_3$, $1_3212$, $12121_2$, $1_22121$,  $2_3121_22_21$, $12_21_2212_3$ and $12_3121_22_2$,
\item if it contains the subword $2_21_2212_31=B_{n-4}\dots B_{n+5}$, then
$$\dots B_{n-7}\dots B_{n+10} = \overline{212_31_2}212_31_2212_2.$$
\end{itemize}
\end{itemize}
\end{proposition}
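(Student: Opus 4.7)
The plan is to follow the template of the proof of Proposition~\ref{p.Cusick-Flahive-thm4}, but to extract positive structural information rather than derive a contradiction. First, by Perron's theorem, $m=m(A)$ for some $A\in(\mathbb{N}^*)^{\mathbb{Z}}$; a direct lower bound on $\lambda_i(A)$ whenever a digit $\geq 3$ appears (using $m<B_\infty<3.3$) forces $A\in\{1,2\}^{\mathbb{Z}}$, and a compactness argument (take a subsequential limit of the shifts $\sigma^{i_k}A$ along any sequence $i_k$ with $\lambda_{i_k}(A)\to m$) yields $B\in\{1,2\}^{\mathbb{Z}}$ with $\lambda_0(B)=m(B)=m$.

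Next, the key numerical fact $(b_\infty,B_\infty)\subset(\alpha_\infty-10^{-5},\alpha_\infty+10^{-6})$ ensures $\lambda_n(B)\leq m<\alpha_\infty+10^{-6}$ for every $n$ and $\lambda_0(B)>\alpha_\infty-10^{-5}$. Lemma~\ref{l.first-restriction} applied at $n=0$ then forces Case~I ($B_{-5}\ldots B_4=12_3121_22_2$) or Case~II ($B_{-4}\ldots B_5=2_21_2212_31$); replacing $B$ by its transpose if necessary, I may assume Case~I. Lemma~\ref{l.second-restriction} fills in $B_{-8}\ldots B_8$, and Lemma~\ref{l.third-restriction} at $n=0$ yields two sub-cases, of which the first (containing the subword $2_2121_22_3121_22_3121_221$) is forbidden by Lemma~\ref{l.Binfty} since $m(B)<B_\infty$. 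So the second sub-case holds and creates a fresh Case~I vicinity at $n=7$; iterating Lemma~\ref{l.third-restriction} at $n=7,14,21,\ldots$, excluding the bad sub-case at each step via Lemma~\ref{l.Binfty}, extends the periodic tail $\overline{1_22_312}$ indefinitely to the right and establishes the first bullet.

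For the second bullet I would take $N\geq 11$ to be the smallest integer such that Case~I does not occur at any position $n\leq -N$. Such an $N$ exists because any Case~I at $n\leq -11$ propagates, by the same iteration of Lemma~\ref{l.third-restriction} run rightward, the periodic pattern $\overline{1_22_312}$ from $n$ up to position $-10$, where it must merge with the already-fixed right structure; sufficiently negative Case~I positions are therefore absorbed into an extension of that structured region. On the tail $\ldots B_{-N-1}B_{-N}$, each of the eight listed forbidden subwords corresponds to an asterisked pattern from Lemma~\ref{l.1} (items (i)--(iv) together with their transposes, plus the Case~I vicinity $12_3121_22_2$ just excluded) whose central $\lambda_j$ would exceed $\alpha_\infty+10^{-6}>m$. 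If the Case~II subword $2_21_2212_31=B_{n-4}\ldots B_{n+5}$ appears in the tail, Lemma~\ref{l.third-restriction}'s second clause splits into two sub-cases; the first embeds the transpose $121_2212_31_2212_31_2212_2$ of the Lemma~\ref{l.Binfty} obstruction and is thus excluded, while the second yields the required extension $\overline{212_31_2}212_31_2212_2$ on positions $\leq n+10$.

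The technical heart of the argument is the inductive step: at every shift by $7$ (rightward in the second paragraph, leftward when handling Case~II in the third), one must check that the hypotheses of Lemma~\ref{l.third-restriction} still apply and that its bad sub-case matches exactly the pattern excluded by Lemma~\ref{l.Binfty}. The main obstacle I anticipate is the combinatorial bookkeeping --- pinning down the precise value of $N$ and showing that exactly these eight forbidden subwords (rather than a longer list) suffice to characterize the freedom of the left tail --- which requires a careful matching between the asterisked patterns of Lemma~\ref{l.1} and the successive applications of Lemma~\ref{l.third-restriction}.
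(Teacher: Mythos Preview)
Your overall strategy matches the paper's closely. One minor variation worth noting: to produce $B$ with $\lambda_0(B)=m(B)=m$, the paper does not use compactness but instead invokes Proposition~\ref{p.Cusick-Flahive-thm4} directly: since $m\in(b_\infty,B_\infty)$ forces $m\notin L$, the original sequence $B$ realizing $m=m(B)$ satisfies $\ell(B)\leq b_\infty<m$, so the supremum is attained at a finite position and a shift (plus possible reversal) does the job. Your compactness route is equally valid, and your treatment of the first bullet (iterate Lemma~\ref{l.third-restriction} at positions $7k$, using Lemma~\ref{l.Binfty} to kill the bad branch each time) is exactly what the paper does.

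The genuine gap is your justification for the existence of $N$. Your ``absorption'' reasoning does not bound $N$: showing that a Case~I occurrence at some $n\leq -11$ propagates a periodic block rightward until it meets the fixed right structure does nothing to prevent Case~I from also occurring at $n-7, n-14,\ldots$, so rightward propagation alone cannot rule out Case~I arbitrarily far to the left. The missing ingredient is the lower bound $m>b_\infty$. The paper argues: if the Case~I word $12_3121_22_2$ appeared in $\ldots B_{-N}$ for \emph{every} $N\geq 11$ (i.e.\ at positions tending to $-\infty$), then propagating rightward from each such occurrence via Lemma~\ref{l.third-restriction} (with Lemma~\ref{l.Binfty} excluding the bad branch) would force $B=\overline{1_22_312}$, whence $m=m(B)=b_\infty$, contradicting $m>b_\infty$. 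Once you insert this contradiction, the existence of $N$ follows; note that the proposition only asserts existence, so your anticipated ``main obstacle'' of pinning down the precise value of $N$ is not an obstacle at all. The seven forbidden subwords other than $12_3121_22_2$ are then handled uniformly for the whole tail $\ldots B_{-11}\supset\ldots B_{-N}$ by Lemma~\ref{l.first-restriction}, exactly as you say, and your treatment of the Case~II occurrence via leftward iteration of Lemma~\ref{l.third-restriction} is also what the paper does.
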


\begin{proof} Let $B\in\{1,2\}^{\mathbb{Z}}$ be a bi-infinite sequence such that $m=m(B)$. Since $m<B_{\infty}$, Proposition \ref{p.Cusick-Flahive-thm4} implies that $\limsup\limits_{i\to\infty}\lambda_i(B)=\ell(B)\leq b_{\infty}< m$.

Therefore, we can select $N_0$ large enough such that $\lambda_n(B)< \frac{b_{\infty}+m}{2} < m$ for all $|n|\geq N_0$. In particular, $m=m(B)=\lambda_{n_0}(B)$ for some $|n_0|<N_0$.

It follows that we can shift $B$ in order to obtain a sequence -- still denoted by $B$ -- such that $\lambda_0(B) = m(B) = m$. Since $m>b_{\infty}>\alpha_{\infty}-10^{-5}$, Lemma \ref{l.first-restriction} says that
$$B_{-5}\dots B_4 = 1222121122 \quad \textrm{or} \quad B_{-4}\dots B_5 = 2211212221.$$
Thus, by reversing $B$ if necessary, we obtain a bi-infinite sequence $B\in\{1,2\}^{\mathbb{Z}}$ such that $m=m(B)=\lambda_0(B)$ and $B_{-5}\dots B_4 = 1222121122$.

Because $\lambda_n(B)\leq m<B_{\infty}$ for all $n\in\mathbb{Z}$, we know from Lemma \ref{l.Binfty} that $B$ does not contain the subsequence $2_2121_22_3121_22_3121_221$, and, thus, we can successively apply Lemma \ref{l.third-restriction} at the positions $7k$, $k\in\mathbb{N}$, to get that
$$B_{-10}\dots B_0 B_1\dots B_7\dots = 2_2121_22_312\overline{1_22_312}.$$
Moreover, Lemma \ref{l.first-restriction} implies that the word $\dots B_{-11}$ does not contain the subwords $21212$, $2121_3$, $1_3212$, $12121_2$, $1_22121$,  $2_3121_22_21$ and $12_21_2212_3$.

Furthermore, the subword $12_3121_22_2$ can not appear in $\dots B_n$ for all $n\leq -11$. Indeed, if this happens, since $m(B)=m<B_{\infty}$, it would follow from Lemma \ref{l.Binfty} that $B$ does not contain the subsequence $2_2121_22_3121_22_3121_221$ and, hence, one could repeatedly apply Lemma \ref{l.third-restriction} to deduce that $B=\overline{1_22_312}$, a contradiction because this would mean that $b_{\infty}<m=m(B)=m(\overline{1_22_312})=b_{\infty}$.

In summary, we showed that there exists $N\geq 11$ such that the word $\dots B_{-N}$ does not contain the subwords $21212$, $2121_3$, $1_3212$, $12121_2$, $1_22121$,  $2_3121_22_21$, $12_21_2212_3$ and $12_3121_22_2$.

Finally, if the word $\dots B_{-11}$ contains the subword $2_21_2212_31=B_{n-4}\dots B_{n+5}$, since $B$ does not contain the subsequence $121_2212_31_2212_31_2212_2$ (thanks to Lemma \ref{l.Binfty} and the fact that $\lambda_n(B)<B_{\infty}$ for all $n\in\mathbb{Z}$), then one can apply Lemma \ref{l.third-restriction} at the positions $n-7k$ for all $k\in\mathbb{N}$ to get that
$$\dots B_{n-7}\dots B_{n+10} = \overline{212_31_2}212_31_2212_2.$$

This completes the proof of the proposition.
\end{proof}

\begin{remark} We use Proposition \ref{p.Cantors-covering-M-L-piece} to detect new numbers in $M\setminus L$: see Appendix \ref{a.new-numbers}.
\end{remark}

\subsection{Comparison between $M\setminus L$ near $\alpha_{\infty}$ and the Cantor set $X$}
The description of $(M\setminus L)\cap  (b_{\infty}, B_{\infty}) = M\cap  (b_{\infty}, B_{\infty})$ provided by Propositions \ref{p.Cusick-Flahive-thm4} and \ref{p.Cantors-covering-M-L-piece} allows us to compare this piece of $M\setminus L$ with the Cantor set
\begin{equation*}
X:=\{[0;\gamma]:\gamma\in\{1,2\}^{\mathbb{Z}} \textrm{ not containing the subwords in } P\}
\end{equation*}
where
$$P:=\{21212, 2121_3, 1_3212, 12121_2, 1_22121, 2_3121_22_21, 12_21_2212_3, 12_3121_22_2,
2_21_2212_31\}$$
introduced in \eqref{e.Cantor-X} above.

\begin{proposition}\label{p.M-L-piece-HD1} $(M\setminus L)\cap  (\alpha_{\infty}-10^{-8}, \alpha_{\infty}+10^{-8})$ contains the set
$$\{[2; \overline{1_2, 2_3, 1, 2}] + [0; 1, 2_3, 1_2, 2, 1, 2_4,\gamma]: 2_3\gamma\in \{1,2\}^{\mathbb{N}} \textrm{ does not contain the subwords in } P\}.$$
\end{proposition}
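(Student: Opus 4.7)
Fix $\gamma \in \{1,2\}^{\mathbb{N}}$ with $2_3\gamma$ avoiding every subword of $P$, and set $\alpha := [2;\overline{1_2,2_3,1,2}] + [0;1,2_3,1_2,2,1,2_4,\gamma]$. The continued fractions $[0;1,2_3,1_2,2,1,2_4,\gamma]$ and $[0;1,2_3,1_2,2,1,\overline 2]$ share their first twelve digits $1,2_3,1_2,2,1,2_4$. Iterating $q_n = a_n q_{n-1} + q_{n-2}$ through this common prefix yields $q_{11} = 2294$ and $q_{12} = 5535$, so the standard formula
$$\bigl|[0;a_1,\dots,a_{12},s] - [0;a_1,\dots,a_{12},t]\bigr| = \frac{|s-t|}{(q_{12}s+q_{11})(q_{12}t+q_{11})},$$
combined with the bounds $s,t \in [(1+\sqrt 3)/2,\,1+\sqrt 3]$ for any tail $[a_1;a_2,\ldots]$ with $a_i \in \{1,2\}$, gives $|\alpha-\alpha_\infty| < 10^{-8}$. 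In particular $\alpha \in (b_\infty, B_\infty)$, so by Proposition~\ref{p.Cusick-Flahive-thm4} it suffices to prove $\alpha \in M$.

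To this end, define $A = (A_n)_{n\in\mathbb Z} \in \{1,2\}^{\mathbb Z}$ by $A_0 = 2$, $A_1 A_2 \cdots = \overline{1_2,2_3,1,2}$, and $A_{-1}A_{-2}\cdots = 1,2_3,1_2,2,1,2_4,\gamma_1,\gamma_2,\ldots$, so that $\lambda_0(A) = \alpha$ tautologically. I claim $m(A) = \lambda_0(A) = \alpha$, i.e.\ $\lambda_n(A) \leq \alpha$ for every $n \in \mathbb Z$. A direct inspection reveals that each of the eleven items (i)--(xi) of Lemma~\ref{l.1}, the item (xii$'$) of Lemma~\ref{l.xii}, and the $22$-letter word of Lemma~\ref{l.Binfty} either appears in $P$ up to transposition (items (i)--(iv)) or contains the exceptional pattern $1222121122 = 12_3\,1\,2\,1_2\,2_2 \in P$ as a subword (all the remaining items). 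Since no member of $P$ starts or ends with $2_4$, the hypothesis that $2_3\gamma$ avoids $P$---together with the transpose-closure of $P$ to handle subwords of $A$ that cross the $2_4$--$\gamma$ boundary, and a direct scan of the fixed part of $A$---implies that the bi-infinite $A$ avoids every subword of $P$, and hence never activates any of the triggers listed above.

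With $A$ avoiding $P$, Lemma~\ref{l.first-restriction} produces at each $n \in \mathbb Z$ the dichotomy: either $\lambda_n(A) < \alpha_\infty - 10^{-5} < \alpha$, or the local pattern around $A_n$ is one of the two exceptional configurations $1222121122$, $2211212221$. The exceptional words both lie in $P$, so they cannot arise inside the $\gamma$-portion of $A$; a finite scan of the fixed entries $A_{-12}\cdots A_5$ then shows the second alternative occurs in $A$ only at the positions $n = 7k$, $k\ge 0$. At $n=0$, $\lambda_0(A) = \alpha$ by construction; and for $n=7k$ with $k\ge 1$, the second component of $\lambda_{7k}(A)$ is obtained from that of $\lambda_0(A)$ by inserting $k$ extra copies of the block $(2_3,1_2,2,1)$ before the trailing $(2_4,\gamma)$, so the two continued fractions agree on their first eleven digits and disagree at the twelfth (digit $1$ for $\lambda_{7k}$ versus digit $2$ for $\lambda_0$), yielding $\lambda_{7k}(A) < \lambda_0(A) = \alpha$ by Lemma~\ref{l.0}. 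The main obstacle I anticipate is precisely this reduction---verifying that the nine subwords listed in $P$ really do absorb every trigger of Lemmas~\ref{l.1}, \ref{l.xii} and~\ref{l.Binfty} throughout $A$ and pin the exceptional configurations to the positions $n=7k$, $k \ge 0$---which is a finite but delicate subword scan entirely analogous in spirit to the case analysis of Proposition~\ref{p.Cantors-covering-M-L-piece}.
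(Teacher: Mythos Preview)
Your overall strategy is right—show $\alpha\in(\alpha_\infty-10^{-8},\alpha_\infty+10^{-8})\subset(b_\infty,B_\infty)$, invoke Proposition~\ref{p.Cusick-Flahive-thm4} to reduce to $\alpha\in M$, and then verify $m(A)=\lambda_0(A)$—but the middle paragraph contains a genuine error that breaks the logic. You assert that ``the bi-infinite $A$ avoids every subword of $P$''. This is false: the periodic right tail $\overline{1_2,2_3,1,2}$ contains $12_3121_22_2\in P$ centred at every position $7k$, $k\ge 0$ (explicitly, $A_{-5}\dots A_4=1,2_3,1,2,1_2,2_2$). You implicitly acknowledge this two sentences later when you say the exceptional configuration occurs at $n=7k$, but this directly contradicts the earlier claim, and the intervening appeal to Lemma~\ref{l.first-restriction} (``With $A$ avoiding $P$, \dots'') rests on the false premise. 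The detour through items (v)--(xi) of Lemma~\ref{l.1}, Lemma~\ref{l.xii}, and Lemma~\ref{l.Binfty} is also unnecessary and does not help here.

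The fix, which is exactly what the paper does, is to separate the two halves of $A$. On the fixed and periodic side ($n\ge -12$), one does \emph{not} argue via $P$-avoidance but instead applies Lemma~\ref{l.2} directly: items (a) and (b) handle every position except those with neighbourhood $1,2,1$, namely $n=-7$ and $n=7k$ for $k\ge 0$; item (f) then disposes of $n=-7$ because $A_{-12}\dots A_{-2}=2_41\,2\,1_2\,2_3$. This leaves only $n=7k$, $k\ge 0$, which you handle correctly. On the $\gamma$-side ($n\le -13$), the hypothesis that $2_3\gamma$ avoids $P$ (hence $\gamma^T 2_3$ avoids $P$, since $P=P^T$) is what legitimises the appeal to the proof of Lemma~\ref{l.first-restriction}: there the absence of the first seven words of $P$ forces one of the nine listed configurations, and the absence of the last two words of $P$ rules out the exceptional ones, giving $\lambda_n(A)<\alpha_\infty-10^{-5}$. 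Once you split the argument this way, your proof goes through.
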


\begin{proof} Consider the sequence
$$B=\gamma^T,2_4, 1, 2, 1_2, 2_3, 1, 2; \overline{1_2, 2_3, 1, 2}$$ where $2_3\gamma\in \{1, 2\}^{\mathbb{N}}$ does not contain subwords in $P$ and $;$ serves to indicate the zeroth position.

On one hand, Remark \ref{r.0} implies that
$$\lambda_0(B)\leq [2; \overline{1_2, 2_3, 1, 2}] + [0; 1, 2_3, 1_2, 2, 1, 2_4, 1, 2, 1] < \alpha_{\infty}+10^{-8}$$
and
$$\lambda_0(B)\geq [2; \overline{1_2, 2_3, 1, 2}] + [0; 1, 2_3, 1_2, 2, 1, 2_4, 2, 1] > \alpha_{\infty} - 10^{-8},$$
and items (a), (b) and (f) of Lemma \ref{l.2} imply that
$$\lambda_n(B)<\alpha_{\infty}-10^{-5}$$ for all positions $n\geq-12$ except possibly for $n=7k$ with $k\geq 1$.

On the other hand,
\begin{eqnarray*}\lambda_{7k}(B) &=& [2; \overline{1_2, 2_3, 1, 2}] + [0;\underbrace{1, 2_3, 1_2, 2, \dots, 1, 2_3, 1_2, 2}_{k \textrm{ times }}, 1, 2_3, 1_2, 2, 1, 2_4,\dots] \\ &<& [2; \overline{1_2, 2_3, 1, 2}] + [0; 1, 2_3, 1_2, 2, 1, 2_3, 1_2, 2, 1] \\
&<& [2; \overline{1_2, 2_3, 1, 2}] + [0; 1, 2_3, 1_2, 2, 1, 2_4] \\
&<& [2; \overline{1_2, 2_3, 1, 2}] + [0; 1, 2_3, 1_2, 2, 1, 2_4,\dots] = \lambda_0(B),
\end{eqnarray*}
so that $\lambda_0(B)-\lambda_{7k}(B)>[0; 1, 2_3, 1_2, 2, 1, 2_4] - [0; 1, 2_3, 1_2, 2, 1, 2_3, 1_2, 2, 1]> 10^{-9}$ for all $k\geq 1$.

Moreover, since $2_3\gamma$ does not contain subwords in $P$, it follows from (the proof of) Lemma \ref{l.first-restriction} that $\lambda_n(B)<\alpha_{\infty}-10^{-5}$ for all $n\leq -13$.

This shows that $m(B)=\lambda_0(B)=[2; \overline{1_2, 2_3, 1, 2}] + [0; 1, 2_3, 1_2, 2, 1, 2_4,\gamma]$ belongs to $(M\setminus L)\cap  (\alpha_{\infty}-10^{-8}, \alpha_{\infty}+10^{-8})$.
\end{proof}

\begin{proposition}\label{p.M-L-piece-HD2} $(M\setminus L)\cap  (b_{\infty}, B_{\infty})$ is contained in the union of
$$\mathcal{C} = \{[2; \overline{1_2, 2_3, 1, 2}] + [0; 1, 2_3, 1_2, 2, 1, 2_2,\theta,\overline{1_2,2_3,1,2}]: \theta \textrm{ is a finite word in } 1 \textrm{ and } 2\}$$
and the sets
$$\mathcal{D}(\delta) = \{[2; \overline{1_2, 2_3, 1, 2}] + [0; 1, 2_3, 1_2, 2, 1, 2_2,\delta,\gamma]: \textrm{ no subword of }\gamma\in \{1,2\}^{\mathbb{N}} \textrm{ belongs to } P\},$$
where $\delta$ is a finite word in $1$ and $2$.
\end{proposition}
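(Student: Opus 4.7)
The plan is to take any $m\in(M\setminus L)\cap(b_{\infty},B_{\infty})$ and feed it into Proposition~\ref{p.Cantors-covering-M-L-piece} to produce a bi-infinite sequence $B\in\{1,2\}^{\mathbb{Z}}$ with $m=\lambda_0(B)$, whose right tail $B_0B_1\dots$ equals $2\,\overline{1_2,2_3,1,2}$ and whose block $B_{-10}\dots B_0$ is the fixed word $2_2121_22_312$. Reading off the continued fraction expansion at the origin immediately gives
\[
m=[2;\overline{1_2,2_3,1,2}]+[0;1,2_3,1_2,2,1,2_2,\eta_1,\eta_2,\eta_3,\dots],
\]
where $\eta_i:=B_{-10-i}$. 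The remainder of the proof consists in analysing the infinite word $\eta=\eta_1\eta_2\dots\in\{1,2\}^{\mathbb{N}}$ and matching it against the two formats in $\mathcal{C}$ and $\mathcal{D}(\delta)$.

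A preliminary remark I would record is that the set $P$ is invariant under word reversal: $21212$ is a palindrome, while $2121_3$ pairs with $1_3212$, $12121_2$ with $1_22121$, $2_3121_22_21$ with $12_21_2212_3$, and $12_3121_22_2$ with $2_21_2212_31$. Since $\eta$ is by construction the reversal of the left tail $\dots B_{-12}B_{-11}$ of $B$, this symmetry implies that a word of $P$ occurs as a subword of that tail if and only if the (transposed) word of $P$ occurs as a subword of $\eta$.

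The heart of the argument is a dichotomy according to whether the caveat clause of the second bullet of Proposition~\ref{p.Cantors-covering-M-L-piece} is triggered. If the left tail of $B$ contains $2_21_2212_31$ at some location $B_{n-4}\dots B_{n+5}$, then the forced extension $\dots B_{n-7}\dots B_{n+10}=\overline{212_31_2}\,212_31_2212_2$ makes $B$ purely periodic to the left with period $212_31_2$; reversing into $\eta$-coordinates, where the transposed period $1_22_312$ appears, $\eta$ takes the form $\theta\,\overline{1_2,2_3,1,2}$ for some finite prefix $\theta$, and hence $m\in\mathcal{C}$. Otherwise the left tail of $B$ avoids $2_21_2212_31$, so by the first bullet of that proposition it avoids every word of $P$; by the reversal symmetry just recorded, the one-sided tail $\gamma:=\eta_{N-10}\eta_{N-9}\dots$ of $\eta$ contains no subword from $P$, and setting $\delta:=\eta_1\dots\eta_{N-11}$ exhibits $m$ as an element of $\mathcal{D}(\delta)$.

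The only step requiring genuine care is the index bookkeeping in the translation between $B$ and $\eta$: one has to verify that the forced periodic block $\overline{212_31_2}$ on the far left of $B$ maps, under the reversal $\eta_i=B_{-10-i}$, to the periodic block $\overline{1_2,2_3,1,2}$ at the tail of $\eta$ (which is exactly its transpose), and that the threshold index $N$ provided by Proposition~\ref{p.Cantors-covering-M-L-piece} places the start of $\gamma$ in $\eta$ precisely at position $N-10$. No new continued fraction estimates are needed: the proposition is essentially an unpacking of Proposition~\ref{p.Cantors-covering-M-L-piece} into the two advertised alternatives.
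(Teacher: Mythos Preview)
Your proposal is correct and follows essentially the same route as the paper: invoke Proposition~\ref{p.Cantors-covering-M-L-piece}, then split into two cases according to whether the forbidden word $2_21_2212_31$ appears in the far-left tail of $B$, yielding respectively the periodic-tail case $m\in\mathcal{C}$ and the $P$-avoiding case $m\in\mathcal{D}(\delta)$. Your explicit remark that $P$ is closed under reversal is exactly the observation the paper uses implicitly when it passes from restrictions on $\gamma^T$ to restrictions on $\gamma$.
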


\begin{proof} By Proposition \ref{p.Cantors-covering-M-L-piece}, if $m\in (M\setminus L)\cap (b_{\infty}, B_{\infty})$, then $m=m(B)=\lambda_0(B)$ with
$$B=\gamma^T\delta^T2_2121_22_312^*\overline{1_22_312}$$
where the asterisk indicates the zeroth position, $\delta$ is a finite word in $1$ and $2$, and the infinite word $\gamma$ satisfies:
\begin{itemize}
\item $\gamma^T$ does not contain the subwords $21212$, $2121_3$, $1_3212$, $12121_2$, $1_22121$,  $2_3121_22_21$, $12_21_2212_3$ and $12_3121_22_2$,
\item if $\gamma^T$ contains the subword $2_21_2212_31$, then $\gamma^T=\overline{212_31_2}\mu^T$ with $\mu$ a finite word in $1$ and $2$.
\end{itemize}

It follows that:
\begin{itemize}
\item if $\gamma^T$ contains $2_21_2212_31$, then
$$m(B)=\lambda_0(B)=[2; \overline{1_2, 2_3, 1, 2}] + [0; 1, 2_3, 1_2, 2, 1, 2_2,\delta,\mu,\overline{1_2,2_3,1,2}]$$
where $\theta=\delta\mu$ is a finite word in $1$ and $2$, i.e., $m(B)\in\mathcal{C}$;
\item otherwise,
$$m(B) = \lambda_0(B)=[2; \overline{1_2, 2_3, 1, 2}] + [0; 1, 2_3, 1_2, 2, 1, 2_2,\delta,\gamma]$$
where $\gamma$ does not contain the subwords $21212$, $2121_3$, $1_3212$, $12121_2$, $1_22121$,  $2_3121_22_21$, $12_21_2212_3$, $12_3121_22_2$ and $2_21_2212_31$, i.e., $m(B)\in\mathcal{D}(\delta)$.
\end{itemize}
This completes the proof of the proposition.
\end{proof}

\subsection{Proof of Theorem \ref{t.M-L-piece-HD}}
By putting together Propositions \ref{p.M-L-piece-HD1} and \ref{p.M-L-piece-HD2}, we can derive Theorem \ref{t.M-L-piece-HD}.

Indeed, by Proposition \ref{p.M-L-piece-HD1}, $(M\setminus L)\cap  (b_{\infty}, B_{\infty})$ contains a set diffeomorphic to $X$ and, hence,
$$HD((M\setminus L)\cap  (b_{\infty}, B_{\infty})) \geq HD(X).$$

By Proposition \ref{p.M-L-piece-HD2}, $(M\setminus L)\cap   (b_{\infty}, B_{\infty})$ is contained in
$$\mathcal{C}\cup\bigcup\limits_{n\in\mathbb{N}} \left(\bigcup\limits_{\delta\in\{1,2\}^n}\mathcal{D}(\delta)\right).$$
Since $\mathcal{C}$ is a countable set and $\{\mathcal{D}(\delta):\delta\in\{1,2\}^n, n\in\mathbb{N}\}$ is a countable family of subsets diffeomorphic to $X$, it follows that
$$HD((M\setminus L)\cap (b_{\infty}, B_{\infty})) \leq HD(X).$$

This proves Theorem \ref{t.M-L-piece-HD}.

\subsection{Lower bounds on $HD(M\setminus L)$}

Note that the definition of $X$ in \eqref{e.Cantor-X} implies that $X$ contains the Gauss-Cantor set $K(\{1, 2_2\})$. Thus, Theorem \ref{t.M-L-piece-HD} implies that:

\begin{corollary}\label{c.HD(M-L)>0} One has $HD(M\setminus L)\geq HD(X) \geq HD(K(\{1, 2_2\}))>0$.
\end{corollary}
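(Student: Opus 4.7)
My plan is to establish the three inequalities in the statement separately, exploiting the structural descriptions already developed.

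For the first inequality $HD(M\setminus L)\geq HD(X)$, I would simply invoke the monotonicity of Hausdorff dimension together with Theorem \ref{t.M-L-piece-HD}. Since $(M\setminus L)\cap (b_\infty,B_\infty)\subseteq M\setminus L$, we have $HD(M\setminus L)\geq HD((M\setminus L)\cap (b_\infty,B_\infty))$, and the right-hand side equals $HD(X)$ by Theorem \ref{t.M-L-piece-HD}. This step is essentially immediate.

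The heart of the argument is the second inequality, $HD(X)\geq HD(K(\{1,2_2\}))$, which I would obtain by showing the inclusion $K(\{1,2_2\})\subseteq X$. Recall that every sequence $\gamma\in\{1,2\}^{\mathbb{N}}$ realizing an element of $K(\{1,2_2\})$ is a concatenation of the finite words $1$ and $22$; consequently every 2 in $\gamma$ lies inside a maximal block of \emph{exactly} two 2's. Hence neither the subword $212$ nor the subword $2_3=222$ can appear in $\gamma$. I would then go through the nine forbidden subwords listed in $P$ and verify that each one contains at least one of these two subwords: the first five (namely $21212$, $2121_3$, $1_3212$, $12121_2$, $1_22121$) all manifestly contain $212$, while the last four ($2_3121_22_21$, $12_21_2212_3$, $12_3121_22_2$, $2_21_2212_31$) all contain $2_3$. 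This proves $K(\{1,2_2\})\subseteq X$ and hence the desired dimension inequality by monotonicity.

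For the third inequality, $HD(K(\{1,2_2\}))>0$, I would appeal to the classical fact that a Gauss--Cantor set $K(B)$ associated to an alphabet $B$ of at least two distinct finite words (satisfying the no-prefix condition) is a $C^{1+\alpha}$ dynamically defined Cantor set with strictly positive Hausdorff dimension. In our situation $B=\{1,2_2\}$ consists of two admissible blocks, so the corresponding iterated function system on $[0,1]$ generated by the two Möbius maps $x\mapsto 1/(1+x)$ and $x\mapsto 1/(2+1/(2+x))$ has two distinct contractive branches, and the standard argument (see, e.g., Palis--Takens \cite{PT}) gives $HD(K(\{1,2_2\}))>0$. No routine estimate is needed here beyond positivity; the sharper numerical bound $0.353<HD(K(\{1,2_2\}))$ is the business of Section \ref{a.PT} and is not required for the corollary.

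There is essentially no serious obstacle in this proof: the only step with content is the combinatorial verification that the nine words of $P$ each contain $212$ or $222$, which is a finite check.
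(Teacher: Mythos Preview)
Your overall strategy matches the paper's brief justification: invoke Theorem~\ref{t.M-L-piece-HD} for the first inequality, observe the inclusion $K(\{1,2_2\})\subseteq X$ for the second, and use the standard positivity of Gauss--Cantor sets for the third. However, your combinatorial verification of the inclusion is flawed. It is \emph{not} true that every $2$ in a sequence of $K(\{1,2_2\})$ lies in a maximal run of exactly two $2$'s: concatenating the block $22$ with itself gives $2222$, so runs of $2$'s can have any even length. Consequently neither $212$ nor $2_3=222$ is actually forbidden in $K(\{1,2_2\})$: the concatenation $22,1,22$ yields $22122$ (which contains $212$), and $22,22$ yields $2222$ (which contains $222$). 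In particular, your check for the last four words of $P$, which relies on the presence of $2_3$, does not go through.

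The fix is simple. What \emph{is} forbidden in $K(\{1,2_2\})$ is the subword $121$: every $2$ belongs to some $22$-block and therefore has a $2$ as an immediate neighbour, so an isolated $2$ flanked by $1$'s cannot occur. A direct inspection shows that each of the nine words in $P$ contains $121$ (for the last four words, locate the segment $\dots 1\,2\,1\dots$ sitting between the longer blocks of $2$'s), and this yields the inclusion $K(\{1,2_2\})\subseteq X$ as desired.
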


In Section \ref{a.PT} below, we complete the proof of Theorem \ref{t.A} by employing some classical bounds on Hausdorff dimensions of dynamical Cantor sets discussed in \cite[pp. 68--70]{PT} to obtain the following refinement of the previous corollary:

\begin{proposition}\label{p.HD(M-L)>0} One has $HD(M\setminus L)\geq HD(K(\{1, 2_2\})) > 0.353$.
\end{proposition}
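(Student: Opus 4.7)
The Cantor set $K := K(\{1,2_2\})$ is the attractor of the conformal iterated function system (IFS) $\{f_1, f_{22}\}$ on the invariant interval $J = [[0;\overline{2,2}], [0;\overline{1}]] = [\sqrt{2}-1,\,(\sqrt{5}-1)/2]$, where $f_1(x) = 1/(1+x)$ and $f_{22}(x) = (2+x)/(5+2x)$ are the M\"obius maps realizing in continued fractions the prepending of the blocks $1$ and $2,2$, respectively. For any word $w = (w_1,\dots,w_n)\in\{1,22\}^n$, the composition $f_w := f_{w_1}\circ\cdots\circ f_{w_n}$ is M\"obius, and a standard continued-fraction identity gives
\[
|f_w'(x)| = (q_n(w) + q_{n-1}(w)\,x)^{-2},
\]
where $q_n(w),\,q_{n-1}(w)$ are the last two continuant denominators of the $\{1,2\}$-continued fraction obtained by concatenating the letters of $w$. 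Since $q_n(w),q_{n-1}(w)>0$, the map $x\mapsto |f_w'(x)|$ is decreasing on $J$, so $a_w:=\min_{x\in J}|f_w'(x)| = (q_n(w) + q_{n-1}(w)\cdot(\sqrt{5}-1)/2)^{-2}$ is an explicit algebraic number.

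My plan is then to apply the classical Palis--Takens bounds (\cite[pp.~68--70]{PT}) to the iterated system $\{f_w\}_{|w|=n}$, whose attractor is still $K$. These bounds give
\[
HD(K) \;\geq\; s_n, \qquad \text{where } s_n \text{ is the unique solution of } \sum_{w\in\{1,22\}^n} a_w^{s_n} = 1.
\]
The inequality comes from the mass-distribution (Frostman) principle applied to the Bernoulli-type measure that assigns weight $a_w^{s_n}$ to each level-$n$ cylinder $I_w = f_w(J)$: by the bounded distortion of M\"obius IFS, $|I_w|$ and $a_w|J|$ are comparable up to a uniform multiplicative constant, so this measure is Frostman of exponent $s_n$. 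Moreover $s_n \nearrow HD(K)$ as $n\to\infty$, since the distortion on $J$ tends to $1$.

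It then remains to exhibit an explicit $n$ for which $\sum_{|w|=n} a_w^{0.353} > 1$. Each $a_w^{0.353}$ is an explicit algebraic expression in $\sqrt{5}$, hence can be bounded below by a rational with controlled error, so the sum is rigorously computable and its comparison with $1$ reduces to a finite check. The main obstacle is that $HD(K)$ is only marginally larger than $0.353$ (the authors' matching upper bound in Remark~\ref{r.Bumby} is $0.35792$): the level-$1$ and level-$2$ Palis--Takens bounds already give values of $s_n$ only a touch under the target, so one must iterate the IFS to a sufficiently high level for the strict inequality $s_n>0.353$ to become visible. Once such an $n$ is fixed, the verification is entirely elementary, and combined with Corollary~\ref{c.HD(M-L)>0} it yields $HD(M\setminus L)\geq HD(K)\geq s_n>0.353$, completing the proof.
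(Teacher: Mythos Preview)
Your approach is essentially the paper's: both invoke the Palis--Takens lower bound $\alpha_n$ (your $s_n$) from \cite[pp.~68--70]{PT} for the IFS $\{f_1,f_{22}\}$. There are, however, two concrete problems.

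First, your interval $J$ is wrong. The convex hull of $K(\{1,2_2\})$ is $[[0;\overline{2}],[0;1,\overline{2}]]=[\sqrt{2}-1,\,1/\sqrt{2}]$, not $[\sqrt{2}-1,(\sqrt{5}-1)/2]$; indeed $[0;1,\overline{2}]=1/\sqrt{2}\approx 0.707$ belongs to $K$ but lies outside your $J$, so $J$ is neither invariant nor contains the attractor. Since $|f_w'(x)|=(q_n+q_{n-1}x)^{-2}$ is decreasing and the true right endpoint $1/\sqrt{2}$ exceeds your $(\sqrt{5}-1)/2$, your formula $a_w=(q_n+q_{n-1}(\sqrt{5}-1)/2)^{-2}$ \emph{overestimates} the actual minimum of $|f_w'|$ on the attractor, and the $s_n$ built from it is not a legitimate lower bound for $HD(K)$. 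This is easily repaired by substituting $1/\sqrt{2}$ for $(\sqrt{5}-1)/2$, which is exactly what the paper uses.

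Second, and more substantively, your argument stops at the plan stage: you assert that for some sufficiently large $n$ one will have $s_n>0.353$, but you neither name such an $n$ nor perform the finite check. Because the target $0.353$ is so close to $HD(K)$, this step is the entire content of the proposition. The paper carries it out: it takes $n=12$, computes (with the corrected endpoints) all $2^{12}$ terms, and obtains $\alpha_{12}=0.353465\ldots>0.353$. Without that explicit verification your write-up is an outline, not a proof.
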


\begin{remark} Of course, this estimate can be improved by computing the value $HD(X)$ using one of the several methods in the literature (e.g., \cite{B}, \cite{H}, \cite{PT}, \cite{JP01}, \cite{JP16}, \cite{FN}). 
\end{remark}

\section{$0.353<HD(K(\{1,2_2\}))<0.35792$}\label{a.PT}

In this section, we revisit pages 68, 69 and 70 of Palis-Takens book \cite{PT} to give some bounds on the Hausdorff dimension of the Gauss-Cantor set $K(\{1, 2_2\})$.

By Lemma \ref{l.0}, the convex hull of $K(\{1,2_2\})$ is the interval $I$ with extremities $[0;\overline{2}]$ and $[0;1,\overline{2}]$. The images $I_1:=\phi_1(I)$ and $I_{22} := \phi_{22}(I)$ of $I$ under the inverse branches
$$\phi_1(x):=\frac{1}{1+x} \quad \textrm{and} \quad \phi_{22}(x) := \frac{1}{2+\frac{1}{2+x}}$$
of the first two iterates of the Gauss map $G(x):=\{1/x\}$ provide the first step of the construction of the Cantor set $K(\{1,2_2\})$. In general, given $n\in\mathbb{N}$, the collection $\mathcal{R}^n$ of intervals of the $n$th step of the construction of $K(\{1,2_2\})$ is given by
$$\mathcal{R}^n:=\{\phi_{x_1}\circ\dots\circ\phi_{x_n}(I): (x_1,\dots, x_n)\in\{1, 22\}^n\}.$$

By definition, $K(\{1, 2_2\})$ is a dynamically defined Cantor set associated to the expanding map $\Psi: I_1\cup I_{22}\to I$ with $\Psi|_{I_1}=G$, $\Psi|_{I_{22}}=G^2$. Following \cite[pp. 68--69]{PT}, given $R\in\mathcal{R}^n$, let
$$\lambda_{n,R}:=\inf\limits_{x\in R}|(\Psi^n)'(x)|, \quad \Lambda_{n,R}:=\sup\limits_{y\in R}|(\Psi^n)'(y)|,$$
and define $\alpha_n\in [0,1]$, $\beta_n\in [0,1]$ by
$$\sum\limits_{R\in\mathcal{R}^n} \left(\frac{1}{\Lambda_{n,R}}\right)^{\alpha_n} = 1 = \sum\limits_{R\in\mathcal{R}^n} \left(\frac{1}{\lambda_{n,R}}\right)^{\beta_n}.$$

It is shown in \cite[pp. 69--70]{PT} that $\alpha_n\leq HD(K(\{1,2_2\}))\leq\beta_n$ for all $n\in\mathbb{N}$.

Therefore, we can estimate on $K(\{1,2_2\})$ by computing $\alpha_n$ and $\beta_n$ for some particular values of $n\in\mathbb{N}$.

In this direction, let us notice that the quantities $\lambda_{n,R}$ and $\Lambda_{n,R}$ can be calculated along the following lines.

Since:
\begin{itemize}
\item $G'(x)=-1/x^2$;
\item the interval $R=\psi_{x_1}\circ\dots\circ\psi_{x_n}(I)\in\mathcal{R}^n$ associated to a string $(x_1,\dots, x_n)\in\{0,1\}^n$ has extremities $[0;x_1,\dots,x_n,\overline{2}]$ and $[0;x_1,\dots,x_n,1,\overline{2}]$, and
\item $(\Psi^n)'|_{R}$ is monotone\footnote{Because $(\Psi^n)|_{R}$ is a M\"obius transformation induced by an integral matrix of determinant $\pm 1$.} on each $R\in\mathcal{R}^n$,
\end{itemize}
we have that
$$\lambda_{n,R} = \min\left\{\prod\limits_{i=1}^{n}\left(\frac{1}{[0;x_i,\dots, x_n, \overline{2}]}\right)^2, \prod\limits_{i=1}^{n}\left(\frac{1}{[0;x_i,\dots, x_n, 1, \overline{2}]}\right)^2 \right\}$$
and
$$\Lambda_{n,R} = \max\left\{\prod\limits_{i=1}^{n}\left(\frac{1}{[0;x_i,\dots, x_n, \overline{2}]}\right)^2, \prod\limits_{i=1}^{n}\left(\frac{1}{[0;x_i,\dots, x_n, 1, \overline{2}]}\right)^2 \right\}.$$

Hence, $\alpha_n$ and $\beta_n$ are the solutions of
$$\sum\limits_{(x_1,\dots,x_n)\in\{1,22\}^n}\left(\min\{[0;x_i,\dots, x_n, \overline{2}], [0;x_i,\dots, x_n, 1, \overline{2}]\}\right)^{2\alpha_n}=1$$
and
$$\sum\limits_{(x_1,\dots,x_n)\in\{1,22\}^n}\left(\max\{[0;x_i,\dots, x_n, \overline{2}], [0;x_i,\dots, x_n, 1, \overline{2}]\}\right)^{2\beta_n}=1.$$

A computer search\footnote{See the Mathematica routine available at `www.impa.br/$\sim$cmateus/files/G(1,22)vPT.nb'.} for the values of $\alpha_{12}$ and $\beta_{12}$ reveals that
$$\alpha_{12} = 0.353465... \quad \textrm{and} \quad \beta_{12} = 0.357917... .$$

In particular, $0.353<\alpha_{12}\leq HD(K(\{1,2_2\}))\leq \beta_{12}<0.35792$, so that the proof of Proposition \ref{p.HD(M-L)>0} and, \emph{a fortiori}, Theorem \ref{t.A} is now complete.

\begin{remark} In general, the approximations $\alpha_n$ and $\beta_n$ given in \cite[pp.68--70]{PT} converge \emph{slowly} to the actual value of the Hausdorff dimension: indeed, as it is explained in \cite[pp.70]{PT}, one has $\beta_n-\alpha_n=O(1/n)$. Hence, it is unlikely that further computations with $\alpha_n$ and $\beta_n$ will lead to the determination of the first ten decimal digits of $HD(K(\{1,2_2\}))$.

On the other hand, a quick implementation\footnote{See the Mathematica routine available at `www.impa.br/$\sim$cmateus/files/G(1,22)vJP.nb'.} of the ``thermodynamical'' algorithm described in Jenkinson-Pollicott \cite{JP01} provided the \emph{heuristic} approximations
\begin{eqnarray*}
& & s_2 = 0.383019..., \quad s_4 = 0.355052...,  \quad 0.35540064 < s_6 < 0.35540065 \\
& & 0.3554004 < s_8 < 0.35554005, \quad 0.355400488 < s_{10} < 0.355400489 \\
& & 0.3553986<s_{12}<0.3553987,
\end{eqnarray*}
for $HD(K(\{1,2_2\}))$. In particular, the super-exponential convergence\footnote{I.e., $|s_n- HD(K(\{1,2_2\}))|=O(\theta^{n^2})$ for some $0<\theta<1$.} of this algorithm \emph{indicates} that $HD(K(\{1,2_2\}))=0.355\dots$. In principle, this heuristics can be made rigorous along the lines of the recent paper \cite{JP16}, but we have not pursued this direction.
\end{remark}

\section{New numbers in $M\setminus L$}\label{a.new-numbers}

Consider the sequences $g, G\in \{1,2\}^{\mathbb{Z}}$ given by
$$g:=\overline{2_4,1_2,2,1},2_5,1,2,1_2,2_3,1,2^*,\overline{1_2,2_3,1,2}$$
and
$$G:=\overline{2,1_2,2,1_2,2,1,2_3,1},2_2,1,2,1_2,2_3,1,2^*,\overline{1_2,2_3,1,2}$$
where the asterisks serve to indicate the zeroth position.

In this section, we show that
\begin{eqnarray*}
c&:=&\lambda_0(G)=[2;\overline{1_2,2_3,1,2}]+[0;1,2_3,1_2,2,1,2_2,\overline{1,2_3,1,2,1_2,2,1_2,2}] \\
&=& \frac{77+\sqrt{18229}}{82}+\frac{17633692-\sqrt{151905}}{24923467}=3.29304447990138\dots
\end{eqnarray*} 
and
\begin{eqnarray*}
\gamma&:=&\lambda_0(g)= [2;\overline{1_2,2_3,1,2}]+[0;1,2_3,1_2,2,1,2_5,\overline{1,2,1_2,2_4}] \\ 
&=& \frac{77+\sqrt{18229}}{82}+\frac{7219908-18\sqrt{82}}{10204619} = 3.29304426427375...
\end{eqnarray*}
are the largest and smallest elements of $(M\setminus L)\cap (b_{\infty}, B_{\infty})$.

\subsection{The largest element of $(M\setminus L)\cap (b_{\infty}, B_{\infty})$} We start the discussions by showing that $c
\in M$:

\begin{lemma}\label{l.c-in-M} One has $c=\lambda_0(G)=m(G)\in M$.
\end{lemma}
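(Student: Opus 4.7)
The plan is to establish the two halves of the claim separately: first, that $\lambda_0(G) = c$; second, that $\lambda_0(G) = m(G)$. Together with Perron's characterization of the Markov spectrum, the second assertion yields $c \in M$. For the first half, reading $G$ off directly gives the forward continued fraction $[G_0; G_1, G_2, \ldots] = [2; \overline{1_2, 2_3, 1, 2}]$, and the backward continued fraction $[0; G_{-1}, G_{-2}, \ldots] = [0; 1, 2_3, 1_2, 2, 1, 2_2, \overline{1, 2_3, 1, 2, 1_2, 2, 1_2, 2}]$; the latter uses that the transitional block $2_2, 1, 2, 1_2, 2_3, 1, 2$ sandwiched between the two periodic parts, read backwards from position $-10$, becomes the prefix $1, 2_3, 1_2, 2, 1, 2_2$, and that the left periodic block $\overline{2, 1_2, 2, 1_2, 2, 1, 2_3, 1}$ read in reverse yields the period $\overline{1, 2_3, 1, 2, 1_2, 2, 1_2, 2}$. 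Both tails are eventually periodic continued fractions, hence quadratic surds; solving the associated quadratic equations yields $(77 + \sqrt{18229})/82$ and $(17633692 - \sqrt{151905})/24923467$, whose sum is the stated value of $c$.

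For the second half, I show $\lambda_n(G) \le c$ for every $n \in \mathbb{Z}$. If $G_n = 1$, then Lemma \ref{l.2}(a) gives $\lambda_n(G) < 3 < b_\infty < c$. Suppose now $G_n = 2$, and invoke Lemma \ref{l.first-restriction}. A routine combinatorial check---exploiting the short periods (length $7$ on the right and $12$ on the left) plus the bounded transition window---verifies that at every such position $n$ the local pattern around $G_n$ matches one of the nine scenarios listed in that lemma. In the bounded scenarios (bullets 2 through 7), items (b)--(f) of Lemma \ref{l.2} yield $\lambda_n(G) < \alpha_\infty - 10^{-5} < c$. Otherwise, the local pattern must coincide with one of the two special patterns $B_{n-5}\dots B_{n+4} = 1,2_3,1,2,1_2,2_2$ or $B_{n-4}\dots B_{n+5} = 2_2,1_2,2,1,2_3,1$.

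A final direct inspection shows that the second special pattern does not occur in $G$, while the first occurs precisely at the positions $n = 7k$ for integers $k \ge 0$. At $n = 0$, $\lambda_0(G) = c$ by the first half. For $n = 7k$ with $k \ge 1$, the forward continued fractions of $\lambda_{7k}(G)$ and $\lambda_0(G)$ coincide (both equal $[2; \overline{1_2, 2_3, 1, 2}]$), and their backward continued fractions agree through the first ten partial quotients $1, 2, 2, 2, 1, 1, 2, 1, 2, 2$ but differ at the eleventh: the $\lambda_0$-tail has digit $G_{-11} = 1$, whereas the $\lambda_{7k}$-tail has digit $G_{7k-11} = 2$. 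Lemma \ref{l.0}, applied with agreement index $10$ (so $(-1)^{11} = -1$), then yields $[0; G_{-1}, G_{-2}, \ldots] > [0; G_{7k-1}, G_{7k-2}, \ldots]$, and hence $\lambda_0(G) > \lambda_{7k}(G)$. Combining all cases gives $m(G) = \lambda_0(G) = c$, and Perron's characterization concludes $c \in M$. The principal obstacle is the combinatorial bookkeeping in the second half---verifying absence of ``non-listed'' local patterns in $G$ and pinning down the occurrences of the special patterns---but the finite periods keep the case check entirely mechanical.
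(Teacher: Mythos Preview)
Your proof is correct and follows essentially the same approach as the paper: both reduce to bounding $\lambda_j(G)$ at the finitely many residue classes where $G_{j-1}G_jG_{j+1}=121$ via items (b)--(f) of Lemma~\ref{l.2}, and then handle the positions $j=7k$ by the direct continued-fraction comparison through Lemma~\ref{l.0}. The only cosmetic difference is that the paper lists those positions explicitly ($j=-22-12k,\,-19-12k,\,-16-12k,\,-7,\,7k$) and names the relevant item of Lemma~\ref{l.2} in each case, whereas you package the same finite check as ``matching one of the nine scenarios of Lemma~\ref{l.first-restriction}''; the underlying verification is identical.
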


\begin{proof} By items (a) and (b) of Lemma \ref{l.2}, we have $\lambda_j(G)<\alpha_{\infty}-10^{-5} < c=\lambda_0(G)$ for all $j\in\mathbb{Z}\setminus\{0\}$ except possibly for
\begin{itemize}
\item $j=-22-12k$, $k\geq 0$,
\item $j=-19-12k$, $k\geq 0$,
\item $j=-16-12k$, $k\geq 0$,
\item $j=-7$,
\item $j=7k$, $k\geq 1$.
\end{itemize}

By item (c) of Lemma \ref{l.2}, we have $\lambda_{-19-12k}(G)<\alpha_{\infty}-10^{-5}<c$ for all $k\geq 0$. By item (d) of Lemma \ref{l.2}, we also have $\lambda_{-22-12k}(G), \lambda_{-16-12k}(G) < \alpha_{\infty}-10^{-5}<c$ for all $k\geq 0$. By item (e) of Lemma \ref{l.2}, we get $\lambda_{-7}(G)<\alpha_{\infty}-10^{-5}<c$.

Moreover, by Lemma \ref{l.0}, we have that
\begin{eqnarray*}
\lambda_{7k}(G) &=& [2;\overline{1_2,2_3,1,2}]+[0;\underbrace{1,2_3,1_2,2,\dots,1,2_3,1_2,2}_{k+1 \textrm{ times }},1,2_2,\overline{1,2_3,1,2,1_2,2,1_2,2}] \\ &<& [2;\overline{1_2,2_3,1,2}]+[0;1,2_3,1_2,2,1,2_2,\overline{1,2_3,1,2,1_2,2,1_2,2}] = \lambda_0(G)
\end{eqnarray*}
for all $k\geq 1$.

In summary, we proved that $\lambda_j(G)<\lambda_0(G)$ for all $j\neq 0$, and, hence, $c=\lambda_0(G)=m(G)\in M$.
\end{proof}

Let us now prove that $m\leq c$ whenever $m\in M\cap(b_{\infty}, B_{\infty})$:
\begin{lemma}\label{l.c-bound}
  If $m\in M\cap(b_{\infty}, B_{\infty})$, then $m\leq c$.
\end{lemma}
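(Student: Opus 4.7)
By Proposition \ref{p.Cantors-covering-M-L-piece}, any $m \in M \cap (b_\infty, B_\infty)$ is realized as $m = \lambda_0(B)$ for some $B \in \{1,2\}^{\mathbb{Z}}$ whose right side $B_{-10} \ldots B_0 B_1 \ldots = 2_2 1 2 1_2 2_3 1 2\,\overline{1_2 2_3 1 2}$ is fixed. Setting $\eta := B_{-11} B_{-12} \ldots$, this yields
\[
m = [2; \overline{1_2, 2_3, 1, 2}] + [0; 1, 2_3, 1_2, 2, 1, 2_2, \eta].
\]
Since $c = \lambda_0(G)$ has the same form with $\eta$ replaced by $\eta_G := \overline{1, 2_3, 1, 2, 1_2, 2, 1_2, 2}$, the inequality $m \leq c$ reduces to the continued-fraction comparison $[0;\ldots,\eta] \leq [0;\ldots,\eta_G]$, which by Lemma \ref{l.0} is controlled by the sign of the first disagreement between $\eta$ and $\eta_G$.

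The plan is to split according to whether $\eta$ contains the subword $2_2 1_2 2 1 2_3 1$. If it does not, then $\eta$ lies in the subshift of $\{1,2\}^{\mathbb{N}}$ avoiding all nine subwords in $P$ (and one checks directly from the period of length $12$ that $\eta_G$ itself belongs to this subshift). I would then argue position by position that $\eta_G$ is the admissible maximizer: at each index $k \geq 1$, whenever the locally CF-optimal value (smaller digit if $10+k$ is odd, larger if even, per Remark \ref{r.0}) differs from $(\eta_G)_k$, attempting that choice propagates via Lemma \ref{l.1} into a forbidden subword of $P$ within a bounded window. A finite case analysis over one period of $\eta_G$ (in the spirit of Lemmas \ref{l.first-restriction}--\ref{l.third-restriction}) suffices, because the period-$12$ structure of $\eta_G$ reasserts itself once aligned with the constraints. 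Consequently, any admissible $\eta \neq \eta_G$ must first deviate from $\eta_G$ in the direction that Lemma \ref{l.0} identifies as strictly decreasing the continued fraction.

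If $\eta$ does contain $2_2 1_2 2 1 2_3 1$, the conditional clause of Proposition \ref{p.Cantors-covering-M-L-piece} forces a periodic left tail of period $\overline{2 1 2_3 1_2}$ joined to the fixed right part by an explicit $18$-character word; compatibility with the fixed positions $B_{-10}, \ldots, B_0$ restricts the position of first occurrence to finitely many values, and any remaining free positions between successive forced extensions are still governed by the non-conditional subwords of $P$. For each such candidate $B$, $\lambda_0(B)$ decomposes as a sum of two eventually periodic continued fractions and can be compared with $c$ directly via Lemma \ref{l.0}; in borderline candidates I would instead show $\lambda_j(B) > \lambda_0(B)$ for some index $j$ (e.g.\ $j = -7$ or a position deep in the left periodic tail), which contradicts the requirement $m(B) = \lambda_0(B)$ and so rules out the candidate. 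The main obstacle I anticipate is the bookkeeping in the first case: verifying that the locally ``suboptimal'' digits of $\eta_G$ (those at positions $k = 3, 8, 9, 10$ within each period) are genuinely forced by the forbidden subword list requires a careful enumeration, but no new ideas beyond those already developed in Section \ref{s.HD(M-L)>0}.
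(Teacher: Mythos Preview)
Your overall strategy (write $m=\lambda_0(B)=[2;\overline{1_2,2_3,1,2}]+[0;1,2_3,1_2,2,1,2_2,\eta]$ and argue $\eta_G$ is extremal) matches the paper's, but Case~1 contains a genuine gap. The claim that ``attempting the CF-optimal digit at each $k$ propagates via Lemma~\ref{l.1} into a forbidden subword of $P$'' fails already at $k=3$: the choice $\eta_3=1$ forces $\eta_4=1$ (via $21212$) and $\eta_5=2$ (via $1_3212$), but nothing further is forced, and in fact $\eta=\overline{1,2,1_2,2_2}$ avoids every word in $P$ (it contains no block $2_3$, so the four length-10 words are excluded automatically, and one checks the five shorter words directly). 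Since this $\eta$ first disagrees with $\eta_G$ at position $13$ with a smaller entry, Lemma~\ref{l.0} gives $[0;1,2_3,1_2,2,1,2_2,\eta]>[0;1,2_3,1_2,2,1,2_2,\eta_G]$, so $\eta_G$ is \emph{not} the maximizer over the subshift avoiding $P$. What rules out such $\eta$ is not subshift avoidance but the full constraint $m(B)<B_\infty$: the paper invokes Lemma~\ref{l.Binfty} precisely at this step to pass from $[0;\ldots,1,2,\ldots]$ to $[0;\ldots,1,2_2,\ldots]$, and thereafter uses Lemma~\ref{l.1} at the \emph{positions} $-16-12k$ and $-22-12k$ (exploiting $\lambda_j(B)<\alpha_\infty+10^{-6}$ there), not merely the derived subword list $P$ on $\eta$. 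Proposition~\ref{p.Cantors-covering-M-L-piece} records a necessary but not sufficient condition, so maximizing over its conclusion overshoots.

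Your Case~2 is also problematic: the first occurrence of the conditional subword can sit arbitrarily far to the left (the finite word $\mu$ in the periodic tail $\overline{212_31_2}\mu^T$ is unconstrained in length), so ``finitely many values'' is incorrect; and showing $\lambda_j(B)>\lambda_0(B)$ for some $j$ would exclude the candidate from $M\cap(b_\infty,B_\infty)$ rather than bound $\lambda_0(B)$ by $c$. The paper sidesteps both issues by never splitting into cases: it works directly with the inequalities $\lambda_j(B)\le m(B)<B_\infty$ at all positions simultaneously, which is both shorter and what actually drives the induction to $\eta_G$.
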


\begin{proof} By Proposition \ref{p.M-L-piece-HD2}, an element $m\in M\cap(b_{\infty}, B_{\infty})$ has the form
$$m=\lambda_0(B)=m(B) = [2;\overline{1_2,2_3,1,2}]+[0;1,2_3,1_2,2,1,2_2,\dots].$$
By Lemma \ref{l.0}, we have
$$\lambda_0(B)\leq [2;\overline{1_2,2_3,1,2}]+[0;1,2_3,1_2,2,1,2_2,1,2,\dots].$$
Since $\lambda_0(B)=m<B_{\infty}$, it follows from Lemma \ref{l.Binfty} that
$$\lambda_0(B)\leq [2;\overline{1_2,2_3,1,2}]+[0;1,2_3,1_2,2,1,2_2,1,2_2,\dots].$$
By Lemma \ref{l.0}, we deduce that
$$\lambda_0(B)\leq [2;\overline{1_2,2_3,1,2}]+[0;1,2_3,1_2,2,1,2_2,1,2_3,1,2,1,\dots].$$
Since $\lambda_{-16}(B)\leq m<B_{\infty}<\alpha_{\infty}+10^{-6}$, it follows from items (i), (ii), (iv) and (v) of Lemma \ref{l.1} that
$$\lambda_0(B)\leq [2;\overline{1_2,2_3,1,2}]+[0;1,2_3,1_2,2,1,2_2,1,2_3,1,2,1_2,2,1,\dots].$$
By Lemma \ref{l.0}, we have
$$\lambda_0(B)\leq [2;\overline{1_2,2_3,1,2}]+[0;1,2_3,1_2,2,1,2_2,1,2_3,1,2,1_2,2,1_2,2,1,2,\dots].$$
Since $\lambda_{-22}(B)\leq m<B_{\infty}<\alpha_{\infty}+10^{-6}$, it follows from item (iii) of Lemma \ref{l.1} that
$$\lambda_0(B)\leq [2;\overline{1_2,2_3,1,2}]+[0;1,2_3,1_2,2,1,2_2,1,2_3,1,2,1_2,2,1_2,2,1,2_2,\dots].$$

At this point, we proceed by induction: if we apply repeatedly Lemma \ref{l.0}, items (i), (ii), (iv) and (v) of Lemma \ref{l.1} at the positions $-16-12k$ for $k\geq 1$, and item (iii) of Lemma \ref{l.1} at the positions $-22-12k$ for $k\geq 1$, then we obtain
$$\lambda_0(B)\leq [2;\overline{1_2,2_3,1,2}]+[0;1,2_3,1_2,2,1,2_2,\overline{1,2_3,1,2,1_2,2,1_2,2}] = c.$$
This completes the proof.
\end{proof}

At this point, Proposition \ref{p.new-numbers} is an immediate consequence of Lemmas \ref{l.c-in-M} and \ref{l.c-bound}.

\subsection{The smallest element of $(M\setminus L)\cap (b_{\infty}, B_{\infty})$} Similarly to the previous subsection, we begin our discussion by showing that $\gamma\in M$:

\begin{lemma}\label{l.gamma-in-M} One has $\gamma=\lambda_0(g)=m(g)\in M$.
\end{lemma}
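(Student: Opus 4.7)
The plan follows the blueprint of Lemma \ref{l.c-in-M}: the goal is to show $\lambda_j(g) < \lambda_0(g) = \gamma$ for every $j \in \mathbb{Z}\setminus\{0\}$, which gives $m(g) = \lambda_0(g) = \gamma \in M$. My first step will be to apply items (a) and (b) of Lemma \ref{l.2} to dispose of every position $j$ with $g_j = 1$ or with a $22$ block adjacent to position $j$; for all such $j$ one immediately obtains $\lambda_j(g) < \alpha_\infty - 10^{-5} < \gamma$. The remaining ``isolated $2$'' positions (those with $g_j = 2$ and $g_{j-1} = g_{j+1} = 1$) can be enumerated by inspecting the three parts of $g$: the left periodic tail $\overline{2_4, 1_2, 2, 1}$ of period $8$ contributes $j = -15 - 8k$ for $k \geq 0$; the middle block $2_5, 1, 2, 1_2, 2_3, 1, 2$ contributes only $j = -7$; and the right periodic tail $\overline{1_2, 2_3, 1, 2}$ of period $7$ contributes $j = 7k$ for $k \geq 1$, besides of course the base case $j = 0$.

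Next, items (c)--(f) of Lemma \ref{l.2} will handle the isolated $2$s that lie outside the right periodic tail. At $j = -7$ the local context $g_{-12}\ldots g_{-2} = 2_4\, 1\, 2^*\, 1_2\, 2_3$ matches item (f) directly. At $j = -15 - 8k$ the context is the mirror pattern $2_3\, 1_2\, 2^*\, 1\, 2_4$; I will invoke the reversal symmetry $\lambda_j(B) = \lambda_{-j}(\widetilde B)$, where $\widetilde B_n := B_{-n}$, and apply item (f) of Lemma \ref{l.2} to $\widetilde g$ at position $-j$, obtaining the same bound $\lambda_j(g) < \alpha_\infty - 10^{-5} < \gamma$.

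For the remaining positions $j = 7k$ with $k \geq 1$, I will compare directly with $\gamma$. Periodicity of $g$ to the right of position $0$ makes the forward part $[g_{7k}; g_{7k+1}, \ldots] = [2; \overline{1_2, 2_3, 1, 2}]$ coincide with that of $\gamma$. For the backward parts, both $[0; g_{7k-1}, g_{7k-2}, \ldots]$ and $[0; g_{-1}, g_{-2}, \ldots]$ begin with the length-$7$ block $1, 2_3, 1_2, 2$; from position $8$ onward $\lambda_{7k}$'s backward expansion reads another copy of $1, 2_3, 1_2, 2$, while $\gamma$'s reads the defect $1, 2_5, \ldots$ inherited from the middle block of $g$. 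A direct bookkeeping then shows that the two expansions agree in their first eleven digits $1, 2, 2, 2, 1, 1, 2, 1, 2, 2, 2$ and first differ at position twelve, where $\lambda_{7k}$ reads $1$ and $\gamma$ reads $2$. Lemma \ref{l.0} then forces $\lambda_{7k}(g) < \lambda_0(g)$ for every $k \geq 1$, completing the argument.

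The main (though modest) obstacle is the combinatorial inventory of isolated-$2$ positions in $g$ together with the twelfth-digit bookkeeping for the rightward family; once these are verified, every case is dispatched either by Lemma \ref{l.2} (sometimes through the reversal symmetry) or by a single application of Lemma \ref{l.0}.
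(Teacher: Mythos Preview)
Your proof is correct and follows essentially the same route as the paper's: items (a),(b) of Lemma~\ref{l.2} reduce to the isolated-$2$ positions $j=-15-8k$, $j=-7$, $j=7k$; item (f) handles the first two families, and the comparison via Lemma~\ref{l.0} handles the third. The only cosmetic difference is that you spell out the reversal symmetry $\lambda_j(B)=\lambda_{-j}(\widetilde B)$ to apply item~(f) at $j=-15-8k$ (where the local pattern is $2_31_22^*12_4$, the transpose of (f)), whereas the paper simply cites item~(f) without comment; since $\lambda_j$ is manifestly invariant under transposition this is implicit in the statement of Lemma~\ref{l.2}.
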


\begin{proof}
From items (a) and (b) of Lemma \ref{l.2}, it follows that $\lambda_j(g)<\alpha_{\infty}-10^{-5}<\gamma$ for all $j\in\mathbb{Z}\setminus\{0\}$ except possibly for
\begin{itemize}
  \item $j=-15-8k$, $k\geq 0$,
  \item $j=-7$
  \item $j=7k$, $k\geq 1$
\end{itemize}
By item (f) of Lemma \ref{l.2}, $\lambda_{-15-8k}(g), \lambda_{-7}(g)<\alpha_{\infty}-10^{-5}<\gamma$ (for $k\geq 0$). Also, by Lemma \ref{l.0}, we have
\begin{eqnarray*}
 \lambda_{7k}(g) &=& [2;\overline{1_2,2_3,1,2}]+[0;\underbrace{1,2_3,1_2,2,\dots,1,2_3,1_2,2}_{k+1 \textrm{ times }},1,2_5,\overline{1,2,1_2,2_4}] \\
   &<& [2;\overline{1_2,2_3,1,2}]+[0;1,2_3,1_2,2,1,2_5,\overline{1,2,1_2,2_4}] = \lambda_0(g)
\end{eqnarray*}
for each $k\geq 1$.

In other terms, we showed that $\lambda_j(g)<\lambda_0(g)$ for all $j\neq 0$, and, \emph{a fortiori}, $\gamma=\lambda_0(g)=m(g)\in M$.
\end{proof}

Let us now establish the fact $m\geq\gamma$ for all $m\in M\cap(b_{\infty},B_{\infty})$:
\begin{lemma}\label{l.gamma-bound}
  If $m\in M\cap(b_{\infty},B_{\infty})$, then $m\geq\gamma$.
\end{lemma}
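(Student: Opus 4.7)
My plan is to mirror the proof of Lemma \ref{l.c-bound} with all inequalities reversed, to produce a matching lower bound $\lambda_0(B)\geq\gamma$. By Proposition \ref{p.M-L-piece-HD2}, any $m\in M\cap(b_\infty,B_\infty)$ has the form
\[
m=\lambda_0(B) = [2;\overline{1_2,2_3,1,2}] + [0;1,2_3,1_2,2,1,2_2,a_{11},a_{12},\ldots],
\]
where $a_i:=B_{-i}$ and the first ten digits $a_1,\ldots,a_{10}$ match the corresponding digits $\hat{a}_1,\ldots,\hat{a}_{10}$ of the second continued fraction of $\gamma$. I would then compare the tail $a_{11},a_{12},\ldots$ with $\hat{a}_{11},\hat{a}_{12},\ldots=2,2,2,1,2,1,1,2,\overline{1,2,1_2,2_4}$.

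If $a_k=\hat{a}_k$ for all $k\geq 11$, then $\lambda_0(B)=\gamma$. Otherwise let $k$ be the first index with $a_k\neq\hat{a}_k$; by Lemma \ref{l.0} one has $\lambda_0(B)>\gamma$ exactly when $(-1)^k(a_k-\hat{a}_k)>0$. This condition fails precisely in the \emph{parity-conflicting} cases: $k$ even with $\hat{a}_k=2$ (forcing $a_k=1$), or $k$ odd with $\hat{a}_k=1$ (forcing $a_k=2$). For our $\gamma$, these bad indices are $k=12$ in the transient part and $k=17+8j,\,18+8j,\,20+8j$ for every $j\geq 0$.

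For each bad $k$, under the inductive hypothesis $a_i=\hat{a}_i$ for all $i<k$, I would verify that the putative $a_k$ creates a subword that violates $m(B)\in(b_\infty,B_\infty)$:
\begin{itemize}
\item $k=12$: $a_{12}=1$ produces $12_3121_22_2$ at $B_{-12}\ldots B_{-3}$; no single item of Lemma \ref{l.1} applies directly to this window, but the chain of Lemmas \ref{l.second-restriction}--\ref{l.third-restriction} (as exploited in the proof of Proposition \ref{p.Cantors-covering-M-L-piece}) forces $B=\overline{1_22_312}$, whence $m(B)=b_\infty$, contradicting $m(B)>b_\infty$.
\item $k=17+8j$: $a_k=2$ produces $21212$ at $B_{-k}\ldots B_{-(k-4)}$, excluded by item (i) of Lemma \ref{l.1} at the central position.
\item $k=18+8j$: $a_k=1$ produces $1_3212$ at $B_{-k}\ldots B_{-(k-5)}$, excluded by the transpose of item (ii) of Lemma \ref{l.1}.
\item $k=20+8j$: $a_k=1$ produces $12_21_2212_3$ at $B_{-k}\ldots B_{-(k-9)}$, excluded by the transpose of item (iv) of Lemma \ref{l.1}.
\end{itemize}
In each of the three periodic families, the relevant window of $B$ lies within the already-matched portion, so the forbiddance argument applies uniformly in $j\geq 0$.

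Combining these, no first mismatch can yield $\lambda_0(B)<\gamma$, so $\lambda_0(B)\geq\gamma$. The main obstacle is the very first parity-conflicting case $k=12$: it is the only bad index not directly excluded by a single application of Lemma \ref{l.1}, and instead relies on the subtler chain of Lemmas \ref{l.second-restriction}--\ref{l.third-restriction} established inside the proof of Proposition \ref{p.Cantors-covering-M-L-piece}. Once this initial case is secured, the periodic forbiddances deliver $\lambda_0(B)\geq\gamma$ with essentially the same argument at every subsequent block.
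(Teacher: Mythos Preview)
Your first-mismatch framework is sound for the periodic families $k=17+8j$, $18+8j$, $20+8j$: in each of those cases the already-matched digits produce one of the forbidden windows of Lemma~\ref{l.1} at a definite position, giving $\lambda_\ast(B)>\alpha_\infty+10^{-6}>m(B)$, a contradiction. The real problem is the isolated case $k=12$, and there your argument does not work as written.

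Suppose $a_{11}=2$ and $a_{12}=1$. Then $B_{-12}\dots B_{-3}=12_3121_22_2$, so Lemma~\ref{l.third-restriction} applies at $n=-7$. Together with Lemma~\ref{l.Binfty} it forces the \emph{second} alternative, i.e.\ it determines $B_{-17}\dots B_{4}$ and in particular $a_{13},\dots,a_{17}=1,2,1,2,2$. But that is all: the recursion of Lemma~\ref{l.third-restriction} propagates only to the \emph{right} (towards larger indices), and to apply it again at $n=-14$ you would need $B_{-19}=1,\,B_{-18}=2$, which is not forced. So $a_{12}=1$ does \emph{not} by itself imply $B=\overline{1_22_312}$; it merely pushes the problem seven steps to the left, leaving you with the same comparison starting at $a_{18}$. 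Your appeal to ``the chain of Lemmas \ref{l.second-restriction}--\ref{l.third-restriction} (as exploited in the proof of Proposition~\ref{p.Cantors-covering-M-L-piece})'' does not close this: in that proof the forbidden occurrence of $12_3121_22_2$ is required to lie entirely inside $\dots B_{-11}$, whereas your window ends at $B_{-3}$.

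The paper's proof resolves exactly this difficulty by abandoning the comparison at $\lambda_0(B)$ and instead locating the \emph{smallest} $k_0$ with $(B_{-11-7k_0},B_{-12-7k_0})\neq(2,1)$; such $k_0$ exists because otherwise the recursion of Lemma~\ref{l.third-restriction} \emph{does} run indefinitely and forces $B=\overline{2,1_2,2_3,1}$. One then bounds $m(B)\geq\lambda_{-7k_0}(B)$ from below. Your approach can be repaired in the same spirit: when the first mismatch is at $k=12$, record the forced digits $a_{13},\dots,a_{17}$, observe that $[0;a_1,\dots,a_{17},a_{18},\dots]$ now has exactly the same shape shifted by $7$, and set up an induction that terminates (since the fully periodic case is excluded by $m>b_\infty$). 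Without that inductive or shifting step, the $k=12$ case is a genuine gap.
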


\begin{proof} By Proposition \ref{p.M-L-piece-HD1}, any $m\in M\cap(b_{\infty},B_{\infty})$ has the form:
$$m=\lambda_0(B) = m(B) = [2;\overline{1_2,2_3,1,2}]+[0; 1,2_3,1_2,2,1,2_2,\dots].$$

We claim there exists a smallest integer $k_0\in\mathbb{N}$ such that $B_{-11-7k_0},B_{-12-7k_0}\neq 2,1$: otherwise, since $m(B)<B_{\infty}<\alpha_{\infty}+10^{-6}$, we could recursively apply Lemma \ref{l.third-restriction} at the positions $n=-7k$ to deduce that $B=\overline{2,1_2,2_3,1}$, and, hence $b_{\infty}=m(\overline{2,1_2,2_3,1})=m(B)$, a contradiction with our assumption $m(B)>b_{\infty}$.

Note that the definition of $k_0$ and Lemma \ref{l.third-restriction} imply that
$$m(B)\geq \lambda_{-7k_0}(B)=[2;\overline{1_2,2_3,1,2}]+[0; 1,2_3,1_2,2,1,2_2,B_{-11-7k_0},B_{-12-7k_0},\dots]$$
with $B_{-11-7k_0},B_{-12-7k_0}\neq 2,1$.

If $B_{-11-7k_0}=1$, then we are done because Lemma \ref{l.0} says that
\begin{eqnarray*}
  m(B)&\geq& \lambda_{-7k_0}(B)=[2;\overline{1_2,2_3,1,2}]+[0; 1,2_3,1_2,2,1,2_2,1,B_{-12-7k_0},\dots] \\
   &>& [2;\overline{1_2,2_3,1,2}]+[0;1,2_3,1_2,2,1,2_5,\overline{1,2,1_2,2_4}] = \gamma.
\end{eqnarray*}

If $B_{-11-7k_0}=2$, then $B_{-11-7k_0},B_{-12-7k_0}\neq 2,1$ forces $B_{-12-7k_0}=2$, and, thus,
$$m(B)\geq \lambda_{-7k_0}(B)=[2;\overline{1_2,2_3,1,2}]+[0; 1,2_3,1_2,2,1,2_4,\dots].$$

By Lemma \ref{l.0}, it follows that
$$m(B)\geq \lambda_{-7k_0}(B)=[2;\overline{1_2,2_3,1,2}]+[0; 1,2_3,1_2,2,1,2_5,1,2,1,\dots].$$

At this point, we recursively apply items (i), (ii) and (iv) of Lemma \ref{l.1} at the positions $j=-15-8k-7k_0$, $k\geq 0$ together with Lemma \ref{l.0} to obtain that
\begin{eqnarray*}
 m(B)&\geq& \lambda_{-7k_0}(B)=[2;\overline{1_2,2_3,1,2}]+[0; 1,2_3,1_2,2,1,2_5,1,2,1,\dots] \\
   &\geq& [2;\overline{1_2,2_3,1,2}]+[0;1,2_3,1_2,2,1,2_5,\overline{1,2,1_2,2_4}] = \gamma.
\end{eqnarray*}

In any case, we proved that $m\geq \gamma$, as desired.
\end{proof}

\appendix

\section{Berstein's interval around $\alpha_{\infty}$}\label{a.Berstein}

In this appendix, we prove that $(b_{\infty},B_{\infty})$ is the largest interval disjoint from $L$ containing $\alpha_{\infty}$. 

\begin{remark}
  The first attempt to describe the largest interval $(b_{\infty}, B_{\infty})$ disjoint from $L$ containing $\alpha_{\infty}$ was made by Berstein \cite{Be73} in 1973: for this reason, we refer to $(b_{\infty}, B_{\infty})$ as Berstein's interval around $\alpha_{\infty}$. As it turns out, his description of $b_{\infty}$ and $B_{\infty}$ in Theorem 1, page 47 of \cite{Be73} is slightly different from ours (perhaps due to some typographical errors). More precisely:
  \begin{itemize}
    \item our value of $b_{\infty}=\ell(\overline{2,1_2,2_3,1}) = 3.2930442439\dots$ is slightly smaller than the value $[2;\overline{1_2,2_3,1,2}]+[0;1,2_3,1_2,2,1,2_5,\overline{1,2,1_2,2_4}] = 3.2930442642\dots$ proposed by Berstein\footnote{Actually, this value proposed by Berstein coincides with the smallest element $\gamma$ of $M\cap (b_{\infty}, B_{\infty})$: see Appendix \ref{a.new-numbers}.};
    \item our value of $B_{\infty} = 3.2930444814\dots$ coincides with the \emph{numerical} value proposed by Berstein, but curiously enough Berstein also claims that $3.2930444814\dots$ equals\footnote{We \emph{guess} that Berstein wanted to write $[2;1,\overline{1,2_3,1,2,1_2,2,1_2,2}]+[0;\overline{1,2_3,1_2,2}] = 3.293044481451\dots$ here, but this quantity is slightly larger than $B_{\infty}=3.293044481438\dots$ anyway.} $[2;1,\overline{1,2_3,1,2,1_2,2,1_2,2}]+[0;1,\overline{2_3,1_2}]$, which is certainly not true (as this last number is $3.29306183\dots$).
  \end{itemize}
\end{remark}

As we pointed out in Remark \ref{r.Berstein}, since Proposition \ref{p.Cusick-Flahive-thm4} ensures that $(b_{\infty},B_{\infty})\cap L=\emptyset$ and $b_{\infty}=\ell(\overline{2,1_2,2_3,1})\in L$, our task is reduced to the following lemma:

\begin{lemma}
  One has $B_{\infty}\in L$.
\end{lemma}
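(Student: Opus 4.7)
The plan is to exhibit $B_\infty$ as a limit of points in $L$ and then invoke the closedness of $L$ already noted in the Introduction. Recalling the definition
$$B_\infty=[2;1,\overline{w}]+[0;P,\overline{w}],\qquad w=1,2_3,1,2,1_2,2,1_2,2,\qquad P=1,2_3,1_2,2,1,2_3,1_2,2,1,2_2,$$
I would form, for each positive integer $k$, the finite word
$$Q_k := (w^T)^k \cdot P^T \cdot 2 \cdot 1 \cdot w^k$$
of length $24k+19$, and consider the purely periodic bi-infinite sequence $A^{(k)}:=\overline{Q_k}\in\{1,2\}^{\mathbb{Z}}$. Since $A^{(k)}$ is periodic, $\ell(A^{(k)})=m(A^{(k)})\in L$, so it would suffice to prove $m(A^{(k)})\to B_\infty$ as $k\to\infty$.

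For the lower bound, let $j^*$ denote the position of the ``$2$'' in the central block $P^T\cdot 2\cdot 1$ of one period of $A^{(k)}$. The right-going continued fraction from $j^*$ equals $[2;1,w^k,(w^T)^k,P^T,\ldots]$, whose first $1+12k$ entries coincide with those of $[2;1,\overline{w}]$; the left-going continued fraction equals $P$ (from reversing $P^T$) followed by $w^k$ (from reversing $(w^T)^k$), whose first $17+12k$ entries coincide with those of $[0;P,\overline{w}]$. By Lemma \ref{l.0} each difference is $O(2^{-12k})$, so $\lambda_{j^*}(A^{(k)})=B_\infty+O(2^{-12k})$ and hence $m(A^{(k)})\geq B_\infty-O(2^{-12k})$.

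For the matching upper bound I would show $\lambda_j(A^{(k)})\leq B_\infty+o(1)$ for every position $j$ in one period of $A^{(k)}$, split into three zones: (a) $j$ at distance at least $k/3$ from the boundary of its $w^k$ or $(w^T)^k$ block, where Lemma \ref{l.0} gives $\lambda_j(A^{(k)})=\lambda_{j\,\bmod\,12}(\overline{w})+O(2^{-k/3})$, reducing the task to $m(\overline{w})\leq B_\infty$; (b) $j$ within bounded distance of $j^*$, where $A^{(k)}$ agrees locally with the bi-infinite sequence $A^{\flat}:=\ldots\overline{w^T}\cdot P^T\cdot 2\cdot 1\cdot\overline{w}$ out to at least $k$ digits on each side, so Lemma \ref{l.0} gives $\lambda_j(A^{(k)})\leq m(A^{\flat})+O(2^{-k})$ and reduces the task to $m(A^{\flat})=B_\infty$ (noting $\lambda_0(A^{\flat})=B_\infty$ by construction); (c) $j$ within $O(1)$ of the junction $w^k\cdot (w^T)^k$ where one period of $A^{(k)}$ meets the next, where $A^{(k)}_j A^{(k)}_{j+1}=22$ and item (b) of Lemma \ref{l.2} (used by time-reversal symmetry when needed) yields $\lambda_j(A^{(k)})<\alpha_\infty-10^{-1}<B_\infty$.

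The main obstacle consists of the two finite verifications in zones (a) and (b): namely, $m(\overline{w})\leq B_\infty$ and $m(A^{\flat})=B_\infty$. The first is a numerical comparison between the twelve explicit quadratic surds $\lambda_j(\overline{w})$, $j=0,\ldots,11$, and $B_\infty$, and will be delicate because $B_\infty-b_\infty<10^{-7}$ is tiny. The second --- equivalent to $\lambda_i(A^{\flat})\leq B_\infty$ for every $i$, with equality at $i=0$ --- is established by applying the forbidden-subword analysis of Lemmas \ref{l.1}, \ref{l.xii} and \ref{l.2} position by position in $A^{\flat}$: most positions are immediately covered by Lemma \ref{l.2}, while positions near $0$ are controlled by Lemma \ref{l.1} and the specific preperiod $P$ hard-wired into $B_\infty$. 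Once both inequalities are in hand, combining the three zones gives $m(A^{(k)})\leq B_\infty+o(1)$, so $m(A^{(k)})\to B_\infty$, and the closedness of $L$ yields $B_\infty\in L$.
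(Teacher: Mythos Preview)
Your approach is essentially the paper's: construct periodic words whose Lagrange values converge to $B_\infty$. Your $Q_k=(w^T)^k\cdot P^T\cdot 2\cdot 1\cdot w^k$ is nearly the paper's $P_a=(w^T)^a\,R\,w^a\,(1,2_3,1)$ with $R=P^T\cdot 2\cdot 1$; the extra tail $(1,2_3,1)$ in the paper is there because of the string identity $(1,2_3,1)\cdot w^T=w\cdot(1,2_3,1)$, which makes the period-to-period junction a seamless continuation of the $w$-pattern, so no separate ``junction zone'' is needed at all. With your word the junction $w^k\,|\,(w^T)^k$ reads $\ldots 1_2,2,1_2,2\,|\,2,1_2,2,1_2\ldots$, and while the two central positions are indeed handled by item~(b) of Lemma~\ref{l.2}, the nearby $1\,\underline{2}\,1$ positions (three and six steps away on each side) require items~(c) and~(d) as well --- so zone~(c) needs a few more cases than you indicate.

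There is also a coverage gap in your decomposition: positions in $w^k$ or $(w^T)^k$ at distance between ``bounded'' and $k/3$ from the $R$-side boundary fall in none of your three zones. The repair is immediate --- enlarge zone~(b) to ``distance at most $k/3$ from $j^*$''; the local agreement with $A^\flat$ is then still at least $12k-k/3$ digits on each side, and the reduction to $m(A^\flat)\leq B_\infty$ absorbs these positions. Finally, your worry that verifying $m(\overline{w})\leq B_\infty$ is delicate is misplaced: the three $1\,\underline{2}\,1$ positions in a period of $\overline{w}$ are dispatched by items~(c) and~(d) of Lemma~\ref{l.2}, giving $m(\overline{w})<\alpha_\infty-10^{-3}$, comfortably below $B_\infty$. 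The only genuinely close comparison in the whole argument is at position $j^*-7$ inside $A^\flat$, where the local window is exactly the pattern of item~(xii'') in Lemma~\ref{l.xii}; the paper invokes (xii'') at precisely the analogous spot.
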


\begin{proof}
  Since $L$ is a closed subset of the real line, it suffices to find a sequence $(P_a)_{a\in\mathbb{N}}$ of finite words in $1$ and $2$ such that 
  $$\lim\limits_{a\to\infty} \ell(\overline{P_a})=B_{\infty}.$$
  
  We claim that the finite words  
  $$P_a:=Q_a R S_a$$
  given by concatenation of the blocks 
  $$Q_a:=\underbrace{2,1_2,2,1_2,2,1,2_3,1,\dots,2,1_2,2,1_2,2,1,2_3,1}_{a \textrm{ times }},$$
  $$R:= 2_2, 1, 2, 1_2, 2_3, 1, 2, 1_2, 2_3, 1, 2^*, 1,$$
  and 
  $$S_a:= \underbrace{1, 2_3, 1, 2, 1_2, 2, 1_2, 2, \dots, 1, 2_3, 1, 2, 1_2, 2, 1_2, 2}_{a \textrm{ times }}, 1, 2_3, 1$$
  satisfy $\lim\limits_{a\to\infty} \ell(\overline{P_a})=B_{\infty}$. 
  
  Indeed, we start by noticing that Lemma \ref{l.0} implies that $B_{\infty}+\frac{1}{2^{12 a-1}}>\lambda_j(\overline{P_a})>B_{\infty}$ whenever the $j$th position of $\overline{P_a}$ corresponds to $2^*$ in a copy of the block $R$: for the sake of convenience, we denote by $\mathcal{C}_a$ the set of such positions. Next, we observe that items (a) and (b) of Lemma \ref{l.2} imply that $\lambda_j(\overline{P_a})<\alpha_{\infty}-10^{-5}$ except possibly when the $j$th position of $\overline{P_a}$ corresponds to $2$ in a copy of $Q_a$, $R$ or $S_a$ whose immediate neighborhood is $1,2,1$. By inspecting the blocks $Q_a$, $R$, $S_a$, we see that if the $j$th position of $\overline{P_a}$ corresponds to $2$ in a copy of $Q_a$, $R$ or $S_a$ whose immediate neighborhood is $1,2,1$, then:
  \begin{itemize}
    \item either $j\in \mathcal{C}_a$ corresponds to $2^*$;
    \item or $j+7\in\mathcal{C}_a$ and its neighborhood in $\overline{P_a}$ is $2_2,1,2_2,1,2,1_2,2_3,1,2,1_2,2_3,1,2,1_2,2$;
    \item or the neighborhood of the $j$th position is $1_2, \widetilde{2}, 1_2$ or $1,2,1_2,\widetilde{2},1,2_2$ or $2_2,1,\widetilde{2},1_2,2,1$ or $1,2_2,1,\widetilde{2},1_2,2$ (where $\widetilde{2}$ indicates the $j$th position).
  \end{itemize}
In the second case, Lemma \ref{l.xii} implies that $\lambda_j(\overline{P_a})<B_{\infty}-10^{-9}$. In the third case, the items (c), (d) and (e) of Lemma \ref{l.2} says that $\lambda_j(\overline{P_a})<\alpha_{\infty}-10^{-5}$. 

It follows from this discussion that 
$$B_{\infty}<\ell(\overline{P_a})=m(\overline{P_a})=\lambda_j(\overline{P_a})<B_{\infty}+\frac{1}{2^{12 a-1}}$$ 
where $j\in\mathcal{C}_a$ corresponds to $2^*$ in a copy of the block $R$. This proves the claim. 
\end{proof}

\section{On Cusick-Flahive sequence $(\alpha_n)_{n\in\mathbb{N}}$}\label{a.CF}

Recall that Theorem 4 in Chapter 3 of Cusick-Flahive book \cite{CF} proves that
$$\alpha_n:=\lambda_0(A_n):= [2; \overline{1_2, 2_3, 1, 2}] + [0; 1, 2_3, 1_2, 2, 1, 2_n, \overline{1, 2, 1_2, 2_3}]\in M\setminus L$$
for all $n\geq 4$. 

In this appendix, we show that the largest element $\alpha_2$ of the sequence $(\alpha_n)_{n\in\mathbb{N}}$ belongs to the Lagrange spectrum: 
\begin{proposition}\label{p.alpha2} One has $\alpha_2=3.2930444886\dots\in L$. In particular, $\alpha_4$ is the largest element of the sequence $(\alpha_n)_{n\in\mathbb{N}}$ belonging to $M\setminus L$. 
\end{proposition}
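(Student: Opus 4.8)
The plan is to show directly that $\alpha_2 = \lambda_0(A_2) = [2;\overline{1_2,2_3,1,2}] + [0;1,2_3,1_2,2,1,2_2,\overline{1,2,1_2,2_3}]$ lies in $L$ by exhibiting a bi-infinite sequence (or a limit of periodic sequences) realizing it as a $\limsup$. The natural candidate, in the spirit of Appendix \ref{a.Berstein}, is to build periodic words $\overline{P_a}$ whose Lagrange values converge to $\alpha_2$. Concretely, $\alpha_2$ sits just below $B_{\infty}$, and the eventual tail of $A_2$ to the right of the distinguished position is the periodic word $\overline{1,2,1_2,2_3}$, whose associated Markov/Lagrange value is $b_\infty = \ell(\overline{2,1_2,2_3,1})$. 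So the idea is to take a block $R$ encoding the ``local picture'' $[2;\overline{1_2,2_3,1,2}] + [0;1,2_3,1_2,2,1,2_2,\dots]$ with the asterisk at the zeroth position, flank it on the right by $a$ copies of the period $1,2,1_2,2_3$ (read in the appropriate transposed order to match Perron's two-sided expansion), flank it on the left by $a$ copies of the corresponding left period, and set $P_a$ to be this concatenation. Then $\overline{P_a}$ is a legitimate sequence in $\{1,2\}^{\mathbb{Z}}$ and one studies $\ell(\overline{P_a}) = m(\overline{P_a})$.

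First I would fix the exact form of $P_a$. Writing $A_2$'s relevant window as $\dots 2_3, 1_2, 2, 1^{(a)} \mid 2_2, 1, 2, 1_2, 2_3, 1, 2^*, 1 \mid (2,1_2,2_3,1)^{(a)} \dots$ — where $(\cdot)^{(a)}$ denotes $a$-fold concatenation of the indicated period and the bar marks position $0$ — I would set $P_a = L_a \, R \, S_a$ with $R$ the central block containing the asterisk, $L_a$ the $a$-fold left period, and $S_a$ the $a$-fold right period, then consider $\overline{P_a}$. Lemma \ref{l.0} immediately gives $\lambda_{j_a}(\overline{P_a}) \to \alpha_2$ where $j_a$ is the asterisk position in one copy of $R$, with error $O(2^{-ca})$ for an explicit constant $c>0$. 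The substance of the argument is the reverse inequality: one must check that $\lambda_j(\overline{P_a}) \le \alpha_2$ (up to exponentially small error) for \emph{every} position $j$, so that $m(\overline{P_a}) = \ell(\overline{P_a})$ is realized at the asterisk positions and no accidental larger value appears elsewhere.

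This verification proceeds exactly as in the proof of the lemma in Appendix \ref{a.Berstein}. Items (a) and (b) of Lemma \ref{l.2} kill all positions $j$ where $B_j = 1$ or where $j$ sits inside a run $22$; so only positions sitting in a $\dots 1,2,1\dots$ local pattern survive, and inspecting the three block types $L_a$, $R$, $S_a$ reduces this to a finite list of neighborhoods. For each such neighborhood one invokes the appropriate sub-item: items (c), (d), (e) of Lemma \ref{l.2} handle the neighborhoods $1_2\tilde 2 1_2$, $1,2,1_2,\tilde 2,1,2_2$, $2_2,1,\tilde 2,1_2,2,1$, $1,2_2,1,\tilde 2,1_2,2$, giving $\lambda_j < \alpha_\infty - 10^{-5} < \alpha_2$; the remaining positions are either the asterisk positions themselves, or positions whose neighborhood in $\overline{P_a}$ forces — by items (i)–(v) of Lemma \ref{l.1} together with Lemma \ref{l.xii} applied much as in Lemma \ref{l.c-bound} — a value bounded by $\alpha_2$ with room to spare. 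The one genuinely new point compared with Appendix \ref{a.Berstein} is that here the flanking period is $1,2,1_2,2_3$ rather than $1_2,2,1_2,2,1,2_3,1$, so the bookkeeping of which Lemma \ref{l.1}/\ref{l.2} item applies at the junctions $L_a R$ and $R S_a$, and at the internal seams of $S_a$, is different and must be redone; I expect this seam analysis to be the main obstacle, though a routine one. Once it is in place, one concludes $\alpha_2 < \ell(\overline{P_a}) = m(\overline{P_a}) < \alpha_2 + 2^{-ca}$, hence $\alpha_2 = \lim_a \ell(\overline{P_a}) \in L$ since $L$ is closed. The final sentence of Proposition \ref{p.alpha2} — that $\alpha_4$ is then the largest term of $(\alpha_n)_{n\in\mathbb{N}}$ in $M\setminus L$ — follows because the sequence $(\alpha_n)_{n\ge 2}$ is decreasing (larger $n$ pushes the block $2_n$ of $1$'s' complement, and by Lemma \ref{l.0} the value moves monotonically), so $\alpha_2 > \alpha_3 > \alpha_4 > \dots$, and $\alpha_2 \in L$, $\alpha_3 \in L$ (the case $n=3$ being analogous, or already known), while $\alpha_n \in M\setminus L$ for $n \ge 4$ by Theorem 4 of \cite{CF}; alternatively one notes $\alpha_3$ is handled by the same construction with $2_3$ in place of $2_2$.
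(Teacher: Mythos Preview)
Your overall strategy --- build periodic words $\overline{P_a}$ whose Lagrange values converge to $\alpha_2$ and verify that $m(\overline{P_a})$ is attained at the designated asterisk positions --- matches the paper's. But there is a genuine gap in the verification step. You propose to dispose of the ``remaining'' $1,2,1$-positions (those not covered by Lemma~\ref{l.2}) via ``items (i)--(v) of Lemma~\ref{l.1} together with Lemma~\ref{l.xii} applied much as in Lemma~\ref{l.c-bound}''. This cannot work: Lemma~\ref{l.1} gives \emph{lower} bounds $\lambda_j > \alpha_\infty + 10^{-6}$, which are useless for showing $\lambda_j \le \alpha_2$; and in Lemma~\ref{l.c-bound} those lower bounds are used only in contrapositive form, to constrain an \emph{unknown} sequence $B$ with $m(B)<B_\infty$ --- whereas here the sequence $\overline{P_a}$ is already fixed, so there is nothing to constrain. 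Concretely, deep inside your periodic flanks the $1,2,1$-positions have neighborhood $\dots 2_3,1,2,1_2,2_3,1,\tilde 2,1_2,2_3,1,2,1_2,2_3,\dots$, and none of items (c)--(f) of Lemma~\ref{l.2} applies (item (f) needs $2_4$ on the left, item (d) needs $2,1$ on the right, etc.). The paper handles exactly these positions by proving two new ad hoc upper-bound lemmas (Lemmas~\ref{l.alpha2-1} and~\ref{l.alpha2-2}) giving $\lambda_j < \alpha_2 - 3\times 10^{-8}$ and $\lambda_j < \alpha_2 - 6\times 10^{-9}$ for the relevant long neighborhoods; you have no substitute for these.

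A secondary point: the paper's periodic words $T_a = U_a V W_a$ are more elaborate than your $L_a R S_a$. The central block $V$ carries \emph{two} distinguished positions $2^{**}$ and $2^*$ (reflecting the symmetry between the left and right periodic tails of $A_2$), and $U_a$, $W_a$ carry extra ``cap'' subwords beyond the $a$-fold period, precisely so that the seam where one copy of $T_a$ meets the next produces only neighborhoods to which the available upper-bound lemmas apply. Your simpler construction may or may not survive the seam analysis, but you have not checked it. Finally, your claim that $(\alpha_n)_{n\ge 2}$ is monotone decreasing is false: by Lemma~\ref{l.0} the parity of the position at which $\alpha_n$ and $\alpha_{n+1}$ first differ alternates with $n$, and indeed $\alpha_3 = b_\infty < \alpha_4$. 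The second sentence of the proposition follows instead from the fact (already asserted in the paper) that $\alpha_2$ is the largest term of the whole sequence, together with $\alpha_3 = b_\infty\in L$ and Theorem~4 of \cite{CF}.
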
 

During the proof of this proposition, we will need the following two lemmas:

\begin{lemma}\label{l.alpha2-1} Let $B\in(\mathbb{N}^*)^{\mathbb{Z}}$ be a bi-infinite sequence. Suppose that $B$ contains the subsequence $2_3121_22_312^*1_22_3121_22_2$. Then, 
$$\lambda_j(B)<\alpha_2 - 3\times 10^{-8}$$
where $j$ is the position indicated by the asterisk. 
\end{lemma}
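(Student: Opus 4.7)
The plan is to extract the continued-fraction data from the prescribed subsequence and then bound $\lambda_j(B)$ from above by an explicit rational number using only Remark \ref{r.0}. Reading off the $11$ digits on each side of the asterisk together with $B_j = 2$, one immediately gets
$$\lambda_j(B) = [2; 1_2, 2_3, 1, 2, 1_2, 2_2, X] + [0; 1, 2_3, 1_2, 2, 1, 2_3, Y]$$
for some tails $X, Y \in \{1,2\}^{\mathbb{N}}$ determined by the unknown part of $B$.

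Each of these continued fractions has exactly $11$ known partial quotients after the leading integer part, so truncating at position $n=11$ (which is odd) produces an upper bound by Remark \ref{r.0}:
$$\lambda_j(B) \;<\; [2; 1_2, 2_3, 1, 2, 1_2, 2_2] \;+\; [0; 1, 2_3, 1_2, 2, 1, 2_3].$$
A direct convergent computation (via the standard recursion $p_k = a_k p_{k-1} + p_{k-2}$, $q_k = a_k q_{k-1} + q_{k-2}$) evaluates the two truncated continued fractions to $4186/1619$ and $1623/2294$ respectively, so the upper bound equals
$$\frac{4186}{1619} + \frac{1623}{2294} \;=\; \frac{12230321}{3713986}.$$

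The final step is the rational comparison $\frac{12230321}{3713986} < \alpha_2 - 3 \times 10^{-8}$. Numerically, $\frac{12230321}{3713986} = 3.29304442\ldots$ while $\alpha_2 = 3.29304448\ldots$, so the gap is roughly $6\times 10^{-8}$, comfortably larger than $3\times 10^{-8}$. The only delicate point is verifying this last inequality rigorously; since $\alpha_2^{(1)} = (77+\sqrt{18229})/82$ has an explicit quadratic form and $\alpha_2^{(2)}$ is likewise quadratic over $\mathbb{Q}$, one can pin $\alpha_2$ down to the required precision by a finite rational computation (for instance, by squaring to clear the surds and reducing to an integer inequality). This is the only computational obstacle; the rest is a routine application of Remark \ref{r.0} together with bookkeeping of the prescribed partial quotients.
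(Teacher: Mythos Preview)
Your proof is correct and follows exactly the paper's approach: truncate both continued fractions at the eleventh (odd) partial quotient via Remark~\ref{r.0} and compute the resulting rational $\frac{12230321}{3713986}$. Two minor slips worth fixing: the tails $X,Y$ lie in $(\mathbb{N}^*)^{\mathbb{N}}$ rather than $\{1,2\}^{\mathbb{N}}$ (the hypothesis is only $B\in(\mathbb{N}^*)^{\mathbb{Z}}$, and Remark~\ref{r.0} covers this anyway), and the actual gap $\alpha_2 - \frac{12230321}{3713986}$ is about $3.45\times 10^{-8}$, not $6\times 10^{-8}$ --- still above the required $3\times 10^{-8}$, so the conclusion stands.
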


\begin{proof} By Remark \ref{r.0}, if $B$ contains $2_3121_22_312^*1_22_3121_22_2$, then 
\begin{eqnarray*}
\lambda_j(B)&<&[2;1_2,2_3,1,2,1_2,2_2] + [0;1,2_3,1_2,2,1,2_3] \\ &=& 
\frac{12230321}{3713986} = 3.2930444541\dots < \alpha_2-3\times10^{-8}.  
\end{eqnarray*} 
\end{proof}

\begin{lemma}\label{l.alpha2-2} Let $B\in(\mathbb{N}^*)^{\mathbb{Z}}$ be a bi-infinite sequence. Suppose that $B$ contains the subsequence $121_22_3121_22_312^*1_22_3121_221_2212_312$. Then, 
$$\lambda_j(B)<\alpha_2-6\times 10^{-9}$$
where $j$ is the position indicated by the asterisk.
\end{lemma}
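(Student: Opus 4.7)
The plan is to mimic the proof of Lemma~\ref{l.alpha2-1} verbatim: bound $\lambda_j(B)$ from above by an explicit sum of two finite continued fractions via Remark~\ref{r.0}, evaluate this rational upper bound, and verify numerically that it is smaller than $\alpha_2-6\times 10^{-9}$.

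Concretely, parsing the forced subsequence $121_22_3121_22_312^*1_22_3121_221_2212_312$ symbol by symbol, the asterisk sits at $a_j=2$, the $19$ forward digits $a_{j+1},\dots,a_{j+19}$ equal $1,1,2,2,2,1,2,1,1,2,1,1,2,1,2,2,2,1,2$, and the $15$ backward digits $a_{j-1},\dots,a_{j-15}$ equal $1,2,2,2,1,1,2,1,2,2,2,1,1,2,1$. Since both $19$ and $15$ are odd, Remark~\ref{r.0} applied to each of the two tails of $\lambda_j(B)=[2;a_{j+1},a_{j+2},\dots]+[0;a_{j-1},a_{j-2},\dots]$ yields, independently of the coefficients of $B$ outside this window,
\[
\lambda_j(B)<[2;1,1,2,2,2,1,2,1,1,2,1,1,2,1,2,2,2,1,2]+[0;1,2,2,2,1,1,2,1,2,2,2,1,1,2,1].
\]

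Running the convergent recursion $p_n=a_np_{n-1}+p_{n-2}$, $q_n=a_nq_{n-1}+q_{n-2}$ evaluates the two finite continued fractions to $894881/346109$ and $14041/19846$; combining them over a common denominator yields an explicit rational upper bound whose decimal expansion is approximately $3.2930444822$, while $\alpha_2\approx 3.2930444886$. Hence the bound undercuts $\alpha_2$ by roughly $6.4\times 10^{-9}$, which is (just) enough to establish the required inequality $\lambda_j(B)<\alpha_2-6\times 10^{-9}$.

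The main obstacle is the tightness of the bound. In Lemma~\ref{l.alpha2-1} the target gap $3\times 10^{-8}$ is comfortable enough that an $11$-digit truncation on each side suffices; here the fivefold smaller target $6\times 10^{-9}$ forces us to use all $15$ backward digits and all $19$ forward digits made available by the longer subsequence in the hypothesis. The asymmetric lengths $15$ and $19$ are not arbitrary: both need to be odd for Remark~\ref{r.0} to supply an upper bound in the correct direction, and any further shortening would leave a rational estimate above $\alpha_2-6\times 10^{-9}$. Once this calibration is in place, the remainder of the proof is routine rational-arithmetic verification.
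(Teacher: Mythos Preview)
Your proof is correct and follows exactly the paper's approach: both apply Remark~\ref{r.0} to truncate the forward and backward continued fractions at all $19$ and $15$ available digits (both odd), obtaining the same rational upper bound (your $894881/346109 + 14041/19846$ combines to the paper's $22619524795/6868879214 = 3.2930444822\dots$), which is then compared numerically against $\alpha_2$. Your additional commentary on the tightness of the bound is accurate but not present in the paper's terse version.
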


\begin{proof} By Remark \ref{r.0}, if $B$ contains $121_22_3121_22_312^*1_22_3121_221_2212_312$, then 
\begin{eqnarray*}
\lambda_j(B)&<&[2;1_2,2_3,1,2,1_2,2,1_2,2,1,2_3,1,2] + [0;1,2_3,1_2,2,1,2_3,1_2,2,1] \\ &=& 
\frac{22619524795}{6868879214} = 3.2930444822\dots < \alpha_2-6\times10^{-9}.  
\end{eqnarray*} 
\end{proof}

After these preliminaries, we are ready to show Proposition \ref{p.alpha2}:

\begin{proof}[Proof of Proposition \ref{p.alpha2}] Since $L$ is a closed subset of the real line, it suffices to find a sequence $(T_a)_{a\in\mathbb{N}}$ of finite words in $1$ and $2$ such that 
$$\lim\limits_{a\to\infty}\ell(\overline{T_a}) = \alpha_2.$$ 

We affirm that the finite words 
$$T_a:=U_a V W_a$$
where 
$$U_a:=2,1,2_3,1,2,1_2,2,1_2,2,1,2_3,1_2,2,1,\underbrace{2_3,1_2,2,1, \dots, 2_3,1_2,2,1}_{a \textrm{ times }},$$
$$V:=2_3,1_2,2^{**},1,2_3,1_2,2,1,2_2,1,2,1_2,2_3,1,2^*,$$
and 
$$W_a:=\underbrace{1_2,2_3,1,2,\dots,1_2,2_3,1,2}_{a\textrm{ times }},1_2,2_3,1,2,1_2,2_3,1,2,1_2,2,1_2,2,1,2_3,1,2$$ 
satisfy $\lim\limits_{a\to\infty}\ell(\overline{T_a})=\alpha_2$. 

Indeed, we start by observing that Lemma \ref{l.0} implies that $|\lambda_j(\overline{T_a})-\alpha_2| < \frac{1}{2^{7a}}$ whenever the $j$th position of $\overline{T_a}$ corresponds to $2^{**}$ or $2^*$ in a copy of the block $V$: for the sake of convenience, we denote by $\mathcal{D}_a$ the set of such positions. Now, we note that items (a) and (b) of Lemma \ref{l.2} imply that $\lambda_j(\overline{T_a})<\alpha_{\infty}-10^{-5}$ except possibly when the $j$th position of $\overline{P_a}$ corresponds to $2$ in a copy of $U_a$, $V$ or $W_a$ whose immediate neighborhood is $1,2,1$. By inspecting the blocks $U_a$, $V$, $W_a$, we see that if the $j$th position of $\overline{P_a}$ corresponds to $2$ in a copy of $U_a$, $V$ or $W_a$ whose immediate neighborhood is $1,2,1$, then:
  \begin{itemize}
    \item either $j\in \mathcal{D}_a$ corresponds to $2^{**}$ or $2^*$;
    \item or $j\notin\mathcal{D}_a$ corresponds to $2$ in a copy of $V$ and its neighborhood is $2,1_2,2,1,2_2,1$ or $1,2_2,1,2,1_2,2$; 
    \item or $j$ corresponds to $2$ in a copy of $U_a$ or $W_a$ and its neighborhood is  
    \begin{itemize}
    \item $2_2,1,\widetilde{2},1_2,2,1$ or $1_2, \widetilde{2}, 1_2$ or $1,2,1_2,\widetilde{2},1,2_2$,  
    \item or $2,1,2_3,1,2,1_2,2,1_2,2,1,2_3,1_2,\widetilde{2},1,2_3,1_2,2,1,2_3, 1_2, 2,1$, 
    \item or $1,2,1_2,2_3,1,2,1_2,2_3,1,\widetilde{2},1_2,2_3,1,2,1_2,2,1_2,2,1,2_3,1,2$,
    \item or $2_2,1_2,2,1,2_3,1_2,\widetilde{2},1,2_3,1_2,2,1,2_3$, 
    \item or $2_3,1,2,1_2,2_3,1,\widetilde{2},1_2,2_3,1,2,1_2,2_2$,
    \end{itemize} 
    where $\widetilde{2}$ indicates the $j$th position. 
  \end{itemize}
In the second and third cases, it follows from items (c), (d), (e) of Lemma \ref{l.2} and Lemmas \ref{l.alpha2-1} and \ref{l.alpha2-2} that $\lambda_j(\overline{T_a})<\alpha_2-6\times 10^{-9}$. 

Since $1/2^{7a}<6/10^9$ when $a\geq 4$, our discussion so far implies that 
$$|\ell(\overline{T_a})-\alpha_2|<\frac{1}{2^{7 a}}$$ 
for all $a\geq 4$. This concludes the argument. 
\end{proof}

\end{document}